\documentclass[reqno,final,a4paper]{amsart}
\usepackage{color}
\usepackage{amsmath, amssymb, amsthm, thmtools}
\usepackage[colorlinks=true,allcolors=blue
]{hyperref}
\usepackage{mathrsfs}
\usepackage{mathtools}
\usepackage[linesnumbered,ruled,vlined]{algorithm2e}

\SetCommentSty{mycommfont}
\SetKwInOut{Input}{input}\SetKwInOut{Output}{output}

\usepackage[noadjust]{cite}
\usepackage[noabbrev,capitalize,nameinlink]{cleveref}
\crefname{equation}{}{}
\usepackage{fullpage}
\usepackage{graphics}
\usepackage{tikzscale}
\usepackage{pifont}
\usepackage{tikz}
\usetikzlibrary{fit, backgrounds, shapes, calc}
\usepackage{pgfplots}
\pgfplotsset{compat=1.12}
\usepackage{bbm}
\usepackage[T1]{fontenc}
\usetikzlibrary{arrows.meta}

\usepackage{environ}
\usepackage{framed}
\usepackage{url}
\usepackage{thmtools}
\usepackage{thm-restate}
\usepackage[noend]{algpseudocode}
\usepackage[labelfont=bf]{caption}
\usepackage{framed}
\usepackage[framemethod=tikz]{mdframed}
\usepackage{appendix}
\usepackage{graphicx}
\usepackage[textsize=tiny]{todonotes}
\usepackage{tcolorbox}
\usepackage{xcolor}
\usepackage[shortlabels]{enumitem}
\crefformat{enumi}{#2#1#3}
\crefrangeformat{enumi}{#3#1#4 to~#5#2#6}
\crefmultiformat{enumi}{#2#1#3}
{ and~#2#1#3}{, #2#1#3}{ and~#2#1#3}
\allowdisplaybreaks

\usepackage{ifthen}
\numberwithin{equation}{section}
\newtheorem{theorem}{Theorem}[section]
\newtheorem{proposition}[theorem]{Proposition}
\newtheorem{lemma}[theorem]{Lemma}
\newtheorem{claim}[theorem]{Claim}
\crefname{claim}{Claim}{Claims}

\newtheorem{corollary}[theorem]{Corollary}

\newtheorem*{question*}{Question}

\theoremstyle{definition}
\newtheorem{definition}[theorem]{Definition}

\newtheorem{question}[theorem]{Question}
\newtheorem*{definition*}{Definition}

\newtheorem*{fact*}{Fact}

\crefname{fact}{Fact}{Facts}

\theoremstyle{remark}

\renewcommand{\P}{\mathbb{P}}

\renewcommand{\Pr}{\mathbb{P}}
\newcommand{\E}{\mathbb{E}}
\newcommand{\eps}{\varepsilon}

\newcommand{\Var}{\operatorname{Var}}

\newcommand{\Bin}{\operatorname{Bin}}

\newcommand{\SMA}{\mathrm{SMALL}}
\newcommand{\MED}{\mathrm{MEDIUM}}
\newcommand{\LAR}{\mathrm{LARGE}}
\newcommand{\HC}{\mathrm{HC}}

\def\epsilon{\varepsilon}

\newenvironment{proofclaim}[1][Proof of claim]{\begin{proof}[#1]}{\end{proof}}

\let\originalleft\left
\let\originalright\right
\renewcommand{\left}{\mathopen{}\mathclose\bgroup\originalleft}
\renewcommand{\right}{\aftergroup\egroup\originalright}

\title{Hitting time for Hamilton cycles in pseudorandom graphs}

\author{Yaobin Chen}\address{Shanghai Center for Mathematical Sciences,~Fudan University,~Shanghai,~200438,~China.}\email{ybchen21@m.fudan.edu.cn}

\author{Yu Chen}\address{School~of~Mathematics~and~Statistics,~Beijing~Institute~of~Technology,~Beijing,~102488,~China.}\email{yu.chen2023@bit.edu.cn}

\author{Seonghyuk Im}\address{Center for AI and Natural Sciences, Korea Institute for Advanced Study (KIAS), Seoul, South Korea}\email{seonghyuk@kias.re.kr}

\author{Yiting Wang}\address{Institute of Science and Technology Austria,~Klosterneuburg,~3400,~Austria.}\email{yiting.wang@ist.ac.at}

\thanks{The third author is supported by the National Research Foundation of Korea (NRF) grant funded by the Korea government(MSIT) No. RS-2023-00210430, by the Institute for Basic Science (IBS-R029-C4), and by a KIAS individual Grant (AP 109501) at Korea Institute for Advanced Study. The fourth author is supported by the European Research Council (ERC), via grant agreements ``RANDSTRUCT'' No.\ 101076777.}

\date{\today}

\begin{document}
\begin{abstract}
Consider the random subgraph process on a base graph $G$ with $n$ vertices: we generate a sequence $\{G_t\}_{t=0}^{|E(G)|}$ by taking a uniformly random ordering of the edges of $G$ and then adding these edges one by one to the empty graph $G_0$ on the same vertex set. We prove that there is a constant $C > 0$ such that if $G$ is an $(n,d,\lambda)$-graph with $d/\lambda \ge C$, then with high probability, the hitting time for the appearance of a Hamilton cycle coincides with the hitting time for reaching minimum degree $2$. This resolves questions posed by Alon--Krivelevich in 2019 and by Frieze--Krivelevich in 2002.
As a consequence, we determine the sharp threshold for Hamilton cycles in $(n,d,\lambda)$-graphs with $d/\lambda\ge C$ for all $d$ sufficiently large. Lastly, we extend our result to the minimum degree $2k$ versus $k$ edge-disjoint Hamilton cycles setting for $k \leq c\cdot \min\{d,\log n\}$ where $c$ is a constant depending on $C$.
This advances on a question asked by Frieze.
\end{abstract}
\maketitle

\section{Introduction}
\subsection{Hitting time}
Consider a random graph process on an $n$-vertex graph $G$, defined as a sequence of nested graphs $\{G_t\}_{t=0}^{e(G)}$. The process begins with $G_0$ as the empty graph on the vertex set of $G$. For each $1 \le t \le e(G)$, the graph $G_t$ is obtained by adding a single edge to $G_{t-1}$, which is chosen uniformly at random from the set of edges in $G$ that are not currently present in $G_{t-1}$ (i.e., from $E(G) \setminus E(G_{t-1})$). A graph property $\mathcal{P}$ is called increasing if it is closed under the addition of edges; that is, if $H \in \mathcal{P}$ and $H \subseteq H'$, then $H' \in \mathcal{P}$. 
For an increasing non-empty graph property $\mathcal{P}$, the hitting time $\tau_\mathcal{P}$ is a random variable defined as the smallest index $t\in \mathbb N$ such that $G_t$ possesses $\mathcal{P}$. 
Formally:$$\tau_\mathcal{P} = \min \{t : G_t \in \mathcal{P}\} \,.$$
This represents the exact moment the process transitions from $G_{t-1} \notin \mathcal{P}$ to $G_t \in \mathcal{P}$.

Let $\mathcal H$ denote the property of Hamiltonicity, and let $\mathcal{D}_d$ denote the property of having a minimum degree of at least $d$. We denote the hitting time of $\mathcal{D}_d$ in a random graph process $G$ as $\tau_d(G)$ (or simply $\tau_d$ if the host graph $G$ is clear from context) and of $\mathcal H$ as $\tau_{\HC}$.
A celebrated result, proved independently by Ajtai, Komlós, and Szemerédi~\cite{Ajtai-Komlos-Szemeredi} and by Bollobás~\cite{Bollobas-hitting-time}, establishes that for a random graph process on the complete graph $G = K_n$, the following holds with high probability\footnote{We say that a sequence of events $\mathcal{E}_n$ holds with high probability if $\lim_{n \to \infty} \mathbb{P}(\mathcal{E}_n) =1$} (abbreviated as whp)
\begin{equation}\label{eqn:hitting-time}
    \tau_2(G) = \tau_{\HC}(G).
\end{equation}
This equality of hitting times demonstrates that, whp the graph becomes Hamiltonian at the precise moment it has minimum degree $2$.

It is natural to consider the Hamiltonicity of the random subgraph process of other host graphs. 
For instance, Bollob\'as and Kohayakawa~\cite{Bollobas-Kohayakawa} proved~\eqref{eqn:hitting-time} holds whp for $G = K_{n, n}$. 
Johansson~\cite{Johansson} proved~\eqref{eqn:hitting-time} holds whp for $G$ with minimum degree at least $\beta n$ for any constant $\beta > 1/2$.
Given those results, one may then ask what happens when the host graph is sparse. 
Condon, Espuny D\'{i}az, Gir\~{a}o, K\"{u}hn and Osthus~\cite{condon-et-al} proved~\eqref{eqn:hitting-time} holds whp for $n$-dimensional hypercubes.  
Moving away from highly structured host graphs like hypercubes, it is natural to investigate host graphs that are random or random-like. For Erd\H{o}s-R\'{e}nyi random graphs $G\sim G(n,m)$ with $m= (n\log n+n\log \log n + \omega(n))/2$ ($G \sim G(n, p)$ with $p = (\log n + \log\log n + \omega(1))/n$, respectively), we observed that whp (over the randomness of $G$ and of the random process), $G$ satisfies~\eqref{eqn:hitting-time}.
For completeness, we include a short proof of it in the appendix. 

Motivated by this result, it is natural to study the hitting time problem on pseudorandom graphs, which are families of deterministic graphs that behave similarly to random graphs. 
A prominent and extensively studied model of pseudorandom graphs is the spectral expanders, commonly referred to as $(n, d, \lambda)$-graphs. An $(n, d, \lambda)$-graph is an $n$-vertex $d$-regular graph where the second largest eigenvalue (in terms of absolute value) of its adjacency matrix is at most $\lambda$. 
The reason $(n,d,\lambda)$-graphs are considered pseudorandom is partially due to the expander mixing lemma, which states the parameter $\lambda$ dictates the edge distribution of $G$: a smaller $\lambda$ indicates that the edge distribution of $G$ resembles more closely to that of an Erd\H{o}s-R\'{e}nyi random graph $G(n, p)$ with $p = d/n$. 
For a more detailed introduction to this topic, we refer the reader to the excellent survey of Krivelevich and Sudakov~\cite{Krivelevich-Sudakov-survey}.

The study of the hitting time of Hamiltonicity in random graph processes on $(n,d,\lambda)$-graphs was initiated by Frieze and Krivelevich~\cite{Frieze-Krivelevich} in 2002. 
They established that if an $(n, d, \lambda)$-graph satisfies the spectral gap condition $\lambda = o(d^{5/2} / (n \log n)^{3/2})$, then whp, the hitting time of Hamiltonicity coincides with the hitting time of minimum degree at least $2$. 
However, this initial result carries a natural restriction on the degree. Because $\lambda = \Omega(\sqrt{d})$ for $d < (1-\varepsilon)n$ (see e.g.~\cite{Krivelevich-Sudakov-survey}), their theorem is only applicable when the degree is relatively large, specifically $d = \Omega((n\log n)^{3/4})$. 
This limitation prompted Frieze and Krivelevich to ask what is the weakest possible spectral gap requirement that preserves this hitting time property. They conjectured that, at least in the dense regime where $d = \Omega(n)$, the condition $\lambda = o(d)$ should suffice.
In 2019, Alon and Krivelevich~\cite{Alon-Krivelevich} relaxed the restriction on the spectral gap to $\lambda \le cd^2/n$ under a stronger assumption on $d$ that $d \ge C n \log \log n / \log n$ for some absolute constants $c, C>0$.
Despite these advancements, determining the optimal conditions on $d$ and $\lambda$ under which the hitting time result holds remains an open problem.

To understand the difficulty of this hitting time problem, note that
determining conditions for Hamiltonicity in $(n, d, \lambda)$-graphs has historically been a notoriously difficult question, even without the added complexity of random processes.
Krivelevich and Sudakov~\cite{krivelevich-Sudakov} made a well-known conjecture more than 20 years ago that an $(n,d,\lambda)$-graph is Hamiltonian if $d/\lambda$ is sufficiently large. 
After a series of works~\cite{krivelevich-Sudakov, Krivelevich-Sudakov-survey, allen2017powers, brandt2006global,hefetz2009hamilton, krivelevich2012number,Glock-MunhaCorreia-Sudakov}, a recent breakthrough by Dragani\'{c}, Montgomery, Munh\'{a} Correia, Pokrovskiy, and Sudakov~\cite{Draganic-et-al} finally settled this conjecture:

\begin{theorem}[\cite{Draganic-et-al}]\label{thm:draganic-et-al-ndlambda}
    There exists a constant $C_0 > 0$ such that if $G$ is an $(n,d,\lambda)$-graph with $\lambda\leq d/C_0$, then $G$ is Hamiltonian.
\end{theorem}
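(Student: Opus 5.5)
The natural route follows the absorption-based strategy of Draganić, Montgomery, Munhá Correia, Pokrovskiy and Sudakov. Throughout, write $G$ for the $(n,d,\lambda)$-graph with $\lambda\le d/C_0$. The only input we use about $G$ is the expander mixing lemma: for all $X,Y\subseteq V(G)$,
\[
\Bigl|e(X,Y)-\tfrac{d}{n}|X||Y|\Bigr|\le \lambda\sqrt{|X||Y|}.
\]
Two expansion properties follow directly. First, every set $X$ with $|X|\le \lambda n/d$, or more precisely $|X|$ of order $n/C_0$, satisfies $|N(X)|\ge (d/(2\lambda))\,|X|$, up to a cap at roughly $n/2$. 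Second, any two disjoint sets of size at least $\lambda n/d$ are joined by an edge. So $G$ is a $C$-expander in the sense of Hefetz--Krivelevich--Szabó, with $C$ growing with $C_0$.

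The first step is to reduce to a statement about expanders that is robust under deleting a small linear-sized vertex set. We fix a small random reservoir $R\subseteq V(G)$ of size about $\beta n$. Using the mixing lemma, we check that $G-R$ and the bipartite pairs $(R,V\setminus R)$ still inherit good expansion; in particular, every vertex keeps about a $\beta$ fraction of its neighbours in $R$.

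The second step is to build an \emph{absorbing structure} inside $R$. The aim is a path $P_{\mathrm{abs}}$ with the property that for every sufficiently small set $W\subseteq R$ satisfying a parity or balance condition, there is a path on $V(P_{\mathrm{abs}})\cup W$ with the same endpoints. The approach is the one introduced in \cite{Draganic-et-al}: use a sparse robust bipartite "template" graph, and realise each template edge by a gadget made of short vertex-disjoint paths. Expanders allow many such disjoint paths of length $O(\log n/\log(d/\lambda))$ to be connected simultaneously, via the extendability method of Friedman--Pippenger and Glebov--Johannsen--Krivelevich. Under the mixing lemma, the tree-embedding and path-connecting lemmas of that framework apply with $d/\lambda$ large.

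The third step covers almost all of $V\setminus V(P_{\mathrm{abs}})$ by a path forest with few paths. We use Pósa rotation–extension for expanders together with DFS: in a $C$-expander, DFS leaves fewer than $\lambda n/d$ vertices uncovered at any time, since otherwise two large sets would have no edges between them. The resulting paths are then linked through the reservoir using the connecting lemma, giving one path that contains $P_{\mathrm{abs}}$ and misses only a small set $W\subseteq R$. Absorbing $W$ and closing the path through one final connection yields a Hamilton cycle.

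The main obstacle is the absorption step, specifically making the absorber robust when only the weak condition $\lambda\le d/C_0$ is available, with no lower bound on $d$ in terms of $n$. Here $d$ may be constant, so neighbourhoods are tiny and probabilistic arguments inside neighbourhoods fail. The plan is to rely entirely on the "sorting network / linked path systems" technique of \cite{Draganic-et-al}, with every step made deterministic via extendability. Since the statement is exactly their theorem, the honest plan is to invoke \cite{Draganic-et-al} and, if needed, check only that their $C$-expander hypotheses follow from the mixing lemma as described above.
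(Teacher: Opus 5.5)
Your operative plan is exactly how the paper treats this statement: check via the expander mixing lemma that an $(n,d,\lambda)$-graph with $d/\lambda\ge C_0$ is a $C$-expander with $C$ growing in $C_0$, then invoke the Dragani\'c--Montgomery--Munh\'a Correia--Pokrovskiy--Sudakov theorem that $C$-expanders are Hamiltonian; the paper simply cites the result as a direct corollary of that expander theorem. The absorption sketch in your middle paragraphs is unnecessary and does not match their argument, which uses an extendability-embedded sorting-network linking structure plus P\'osa rotations to reduce the endpoints of a linear forest; moreover, your reservoir step fails for bounded $d$, since a random $R$ leaves many vertices with no neighbours in it, so it should not be offered as the proof.
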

Building on the groundbreaking work~\cite{Draganic-et-al}, we prove that for any $(n, d, \lambda)$-graph, the hitting time of Hamiltonicity coincides with the hitting time of minimum degree at least $2$ whp, provided that the spectral ratio $d/\lambda$ is sufficiently large. As a consequence, we answer Frieze and Krivelevich's question in a strong form. 
\begin{theorem}\label{thm:hitting-time}
    There exist a constant $C > 0$ such that if $G$ is an $(n,d,\lambda)$-graph where $d/\lambda \geq C$, then we have $\tau_2(G) = \tau_{\HC}(G)$ whp.
\end{theorem}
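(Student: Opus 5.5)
Our plan is to follow the classical Bollobás / Ajtai--Komlós--Szemerédi scheme: show that whp the graph $G_{\tau_2}$ contains a sparse expander structure, and then make it Hamiltonian by absorbing the few low-degree vertices. To get Hamiltonicity in the sparse regime we use the method of Draganić, Montgomery, Munhá Correia, Pokrovskiy and Sudakov behind \Cref{thm:draganic-et-al-ndlambda}, in a robust form. First fix the time window. Let $p=|E(G)|^{-1}t$ and couple the process with the random subgraph $G_p$ near the threshold $p_0=(\log n+\log\log n)/d$. Standard computations with regularity of $G$ show that whp $\tau_2\in[t_-,t_+]$, where $t_\pm$ correspond to $p_\pm=(\log n+\log\log n\pm\omega(1))/d$. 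If $d\le \log n$ (or $d$ is smaller still), then $p$ is of constant order and the process keeps only a constant fraction of edges. In that regime we instead treat $G_{\tau_2}$ as $G$ minus a random set of edges and need a different robustness statement; we flag this as a separate case.

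Next we split the vertices at time $t_-$. Let $\SMA$ be the set of vertices of degree at most $\delta \log n$ (or $\delta d p$) in $G_{t_-}$. Using the expander mixing lemma together with union bounds over the neighbourhood structure of $G$, we show whp the following. Small vertices are few, at most $n^{1-c}$. No two small vertices are within distance $4$ in $G_{t_+}$. No short cycle contains a small vertex. Every small vertex acquires degree at least $2$ exactly at $\tau_2$, with neighbours in the large set. For the large vertices we show $G_{t_-}[\LAR]$ expands: sets $X$ of size up to $n/(C'\,\cdot)$ have $|N(X)|\ge \Omega(dp)|X|$, and disjoint large sets are joined by edges. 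The second property is where $\lambda\le d/C$ enters, via the mixing lemma applied to $G$ followed by Chernoff on the random edges.

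The key step is to produce a Hamilton cycle in $G_{\tau_2}$. Contract each small vertex $v$ together with two chosen neighbours $u_1,u_2$ into a path $u_1vu_2$ that must be traversed. We then need a Hamilton cycle in an expander through a prescribed matching of "forced paths", spread out by the distance condition. We intend to prove an extendability/absorption statement in the style of \cite{Draganic-et-al}: a $C$-expander with good connectivity between large sets contains a Hamilton cycle covering prescribed, well-separated paths. The plan is to rerun their argument (linking structures, sorting networks of extendable paths, absorbers) inside the sparse random subgraph of $G$. Their argument only uses expansion of small sets and edges between large sets, plus some spare randomness. For the spare randomness we use two-round exposure: reserve a sprinkled $G_{\epsilon p}$ of edges from $(t_-,t_+)$ to close cycles via Pósa rotations when needed.

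We expect the main obstacle to be exactly this step. The DMMPS absorption is designed for the host graph with $d/\lambda$ large, not for sparse random subgraphs of degree $\Theta(\log n)$, where $\lambda$-type control is lost. We would first try to identify a deterministic "$(\beta,C)$-expander with joined large sets" condition under which their proof yields Hamiltonicity, and verify it for $G_{t_-}[\LAR]$. If that fails, the fallback is Pósa rotation–extension with boosters from $G_{\epsilon p}$. Boosters are plentiful because each rotation endpoint set has size $\Omega(n)$, and mixing on $G$ guarantees $\Omega(dn)$ booster edges. This gives probability $1-o(1/n)$ per round over $n$ rounds.
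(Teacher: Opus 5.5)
\paragraph{The main gap: forcing edges into the Hamilton cycle.} Your first steps match the paper's. You show that whp $\SMA$ is small, its vertices are pairwise at distance more than $4$, non-small sets expand, and large sets are joined. You then replace each $v\in\SMA$ by a forced path $u_1vu_2$; the paper instead deletes $v$ and adds the auxiliary edge $u_1u_2$ to a set $E_0$. What remains is to find a Hamilton cycle in a $C_1$-expander through a prescribed set of well-separated edges. That statement is \Cref{thm:robust-Hamiltonicity}, and it is the heart of the proof. You only announce an intention to rerun \cite{Draganic-et-al}, and you misplace the difficulty. \Cref{thm:expander-hamiltonicity} is about $C$-expanders and needs no spectral control, and your own structural properties already make $G_{\tau_2}-\SMA$ a $C_1$-expander. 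The real obstacle is that \cite{Draganic-et-al} reduce the number of path endpoints by Pos\'a rotations (\Cref{lem:reduce-endpoint}), and a rotation can destroy a forced edge.

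The missing idea is to quarantine the forced edges before any rotation happens:
\begin{itemize}
\item Embed the sorting-network linking structure $G'[X]$, with terminal sets $A,B$, using the extendability lemma (\Cref{lem:extendable}) while avoiding $V(E_0)$.
\item Thread all edges of $E_0$ onto a single path $P_0$ from some $a\in A$ to some $b\in B$. Connect the remaining vertices of $A$ and $B$ by further paths, keeping everything extendable.
\item Run the endpoint reduction only inside $G'[Z\cup(Y\setminus V(P_0))\cup A\cup B]$. Extendability keeps this graph a $C/100$-expander, and $P_0$ is never rotated.
\item Use the linking structure to join all the paths, $P_0$ included, into a Hamilton cycle.
\end{itemize}

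\paragraph{Your fallback does not work, and two smaller issues.} The booster pairs $\{a,b\}$ with $a\in\mathrm{END}$ and $b\in\mathrm{END}(a)$ do not form a product set $S\times T$. So the expander mixing lemma gives no lower bound on how many of them are edges of $G$: each linear-size set $\mathrm{END}(a)$ may avoid all $d$ neighbours of $a$ in $G$. Indeed, suppose mixing alone produced even one $G$-edge from $a$ to $\mathrm{END}(a)$. Then Pos\'a's argument applied to a longest path of $G$ itself, which is a connected expander with linear-size endpoint sets, would give a short proof of \Cref{thm:draganic-et-al-ndlambda}. That conjecture resisted such methods for two decades.

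Two further points. First, the theorem covers all $d$ down to a constant, but you leave $d\lesssim\log n$ open. The paper needs no different robustness statement there. Define $\SMA$ by degree at most $d/10^6$ and use $e(G_{\tau_2})=(1-\eps)nd/2$ with $\eps<n^{-1/d}$; direct counting then gives the same structural properties, and the same forced-edge theorem applies. Second, your window $p_\pm=(\log n+\log\log n\pm\omega(1))/d$ is wrong for $d=O(\log n)$ and for $\log n\ll d\ll\log^2 n$ (see \Cref{cor:sharp-threshold-refine}). A crude window $[p_1,(1+\eta)p_1]$ with $p_1=1-e^{-\log n/d}$ is all that is needed.
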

By comparison, all previous results on hitting time hold only for $d$ polynomially large in $n$.
Our result only (implicitly) requires $d$ to be at least a large constant.

\subsection{Sharp thresholds}
Hitting time results are the strongest one can hope for since they pin down the exact moment the random subgraph becomes Hamiltonian. From there, we deduce a sharp threshold for Hamiltonicity in $(n,d,\lambda)$-graphs. Let $G_p$ denote the random subgraph of $G$ obtained by including each edge of $G$ independently with probability $p \in (0, 1)$. For an increasing graph property $\mathcal{P}$, we say that $p_0 \in (0, 1)$ is a sharp threshold for $\mathcal{P}$ if, for any constant $\varepsilon > 0$, the following holds whp: $G_{p} \notin \mathcal{P}$ when $p \le (1-\varepsilon)p_0$, and $G_{p} \in \mathcal{P}$ when $p \ge (1+\varepsilon)p_0$. 
The study of sharp threshold possesses a vast amount of literature and dates back to the birth of the study of the Erd\H{o}s-R\'{e}nyi random graphs. 
A few prominent examples that appear in every introductory course on random graphs include connectivity, Hamiltonicity, and containment of a subgraph of a fixed size. See, for instance, the classic textbooks on random graphs~\cite{bollobas2001random, janson2000random, frieze2015introduction}.
For an overview of this topic, we refer the reader to a recent survey by Perkins~\cite{perkins2025searching}.

As a corollary of \Cref{thm:hitting-time}, we determine the sharp threshold for Hamiltonicity in $(n, d, \lambda)$-graphs with $d/\lambda$ being sufficiently large.
\begin{corollary}\label{cor:sharp-threshold}
    There exist a constant $C > 0$ such that if $G$ is an $(n,d,\lambda)$-graph where $d/\lambda \geq C$, then the sharp threshold for the Hamiltonicity of $G$ is equal to 
    \[
        p_0 = \begin{cases}
            1 &\text{ if } d = o(\log n),\\
            1 - e^{-\log n/d}
           &\text{ if } d = \Omega(\log n).
        \end{cases}
    \]
    In particular, when $d = \omega(\log n)$, one can take $p_0$ to be $\log n/d$.
\end{corollary}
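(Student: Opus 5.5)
We derive the corollary from \Cref{thm:hitting-time} by the standard coupling between the random subgraph process and $G_p$, which reduces Hamiltonicity of $G_p$ to the event $\delta(G_p)\ge 2$.

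\textbf{Coupling.} Realize $G_p$ by giving each edge of $G$ an independent uniform label $U_e\in[0,1]$ and keeping the edges with $U_e\le p$. Ordering edges by increasing label gives a uniformly random edge ordering, hence a random subgraph process $\{G_t\}$. Moreover $G_p=G_{N}$ with $N=|\{e:U_e\le p\}|$. By \Cref{thm:hitting-time}, whp $\tau_2=\tau_{\HC}$. On this event, $G_p$ is Hamiltonian if and only if $N\ge \tau_{\HC}=\tau_2$, which holds if and only if $\delta(G_p)\ge 2$. Hence
\[
\bigl|\Pr(G_p\in\mathcal H)-\Pr(\delta(G_p)\ge 2)\bigr|=o(1)
\]
uniformly in $p$, and it suffices to find the sharp threshold for minimum degree $2$ in $G_p$.

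\textbf{Minimum degree $2$.} Let $X$ be the number of vertices of degree at most $1$ in $G_p$. Each degree is $\Bin(d,p)$, so
\[
\E X = n\bigl((1-p)^d + dp(1-p)^{d-1}\bigr).
\]
We treat the two regimes separately.

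\emph{Case $d=o(\log n)$.} Since $\lambda<d$ and $d$ is at least a large constant, $G$ is connected. If $p\le 1-\varepsilon$, then $(1-p)^d\ge \varepsilon^d=n^{-o(1)}$, so $\E X\to\infty$. The events $\{\deg_{G_p}(v)=0\}$ depend only on the edges at $v$, so a greedily chosen set of $\Omega(n/d)$ vertices pairwise at distance at least $3$ has independent events. Some vertex in that set is isolated whp, so $G_p$ is not Hamiltonian. Thus $p_0=1$, and only the lower bound needs proof.

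\emph{Case $d=\Omega(\log n)$.} Write $q=1-p$ and $p=(1\pm\varepsilon)p_0$, with $q_0^d=e^{-\log n}=1/n$.
\begin{itemize}
\item \emph{Upper side.} Show $\E X=o(1)$. The key point is that the correction factor $dp/q$ is $O(\log n)$ or polynomially small relative to the gain from the $\varepsilon$ shift. Concretely, $q^d$ is roughly $n^{-(1+\varepsilon')}$ after the shift, since $-d\log(1-p)$ scales like $(1+\varepsilon)\log n$ up to constants. The term $dp\le d$ is absorbed when $d=O(\log n)$. When $d\gg\log n$, $dp\approx \log n$ and is absorbed by $n^{-\varepsilon'}$.
\item \emph{Lower side.} $\E X\ge n q^d\ge n^{\varepsilon'}\to\infty$. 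For concentration, use the second moment on isolated vertices. Pairs $u,v$ that are non-adjacent in $G$ have independent events. Adjacent pairs number $nd/2$ and contribute a factor of $1/q$ each, so
\[
\E[Y^2]\le (\E Y)^2 + \E Y + nd\, q^{2d-1},
\]
where $Y$ counts isolated vertices. The last term is $o\bigl((\E Y)^2\bigr)$ since $q^{2d-1}n d\ll n^2 q^{2d}$ whenever $d\ll n q$. Chebyshev then gives $Y>0$ whp.
\end{itemize}

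\textbf{Remaining checks.} The claim $p_0\sim \log n/d$ when $d=\omega(\log n)$ follows from $1-e^{-x}\sim x$ as $x\to 0$. The main obstacle is the bookkeeping in the regime $d=\Theta(\log n)$, where $p_0$ is a constant, and in the range $p_0$ close to $1$. There the factor $dp/q$ must be compared carefully with the polynomial gain $n^{\pm\varepsilon'}$. One also has to verify that $\varepsilon$-multiplicative changes in $p$ translate into polynomial changes in $q^d$: since $\log(1/q)$ is increasing and convex in $p$, a multiplicative change $(1\pm\varepsilon)$ in $p$ changes $d\log(1/q)$ by a factor bounded away from $1$.
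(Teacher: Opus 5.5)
Your reduction is the paper's: it also uses \Cref{thm:hitting-time} to replace Hamiltonicity of $G_p$ by $\delta(G_p)\ge 2$, then checks the moment conditions of \Cref{prop:bound-hitting-time-dense}, going through the finer \Cref{cor:sharp-threshold-refine}. Your explicit label coupling is correct. So is the convexity argument for $-\log(1-p)$, which turns the window $(1\pm\varepsilon)p_0$ into $q^d\le n^{-(1+\varepsilon)}$, resp.\ $q^d\ge n^{-(1-\varepsilon)}$, uniformly over $d\ge c\log n$; it is a cleaner direct route to the coarse statement. However, the second-moment step on the lower side fails as written. Your bound $\E[Y^2]\le(\E Y)^2+\E Y+nd\,q^{2d-1}$ discards the term $-n(d+1)q^{2d}$, since only $n(n-1-d)$ ordered pairs are non-adjacent. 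Chebyshev then only gives $\Pr(Y=0)\le 1/\E Y+d/(nq)$. The condition $d\ll nq$ that you invoke is not automatic, because the statement covers $d=\Theta(n)$. For example, take $G=K_n$: at $p=(1-\varepsilon)p_0\approx(1-\varepsilon)\log n/n$ you have $q\to 1$ and $d/(nq)\to 1$, so nothing follows.

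The fix is to keep the cancellation and work with covariances. For adjacent $u,v$ you have $\mathrm{Cov}(I_u,I_v)=q^{2d-1}-q^{2d}=p\,q^{2d-1}$, so $\Var Y\le \E Y+nd\,p\,q^{2d-1}$. This gives $\Var Y/(\E Y)^2\le 1/\E Y+dp/(nq)\le n^{-\varepsilon}+\log n/(\varepsilon n)$, using $dp\le dp_0\le\log n$ and $q\ge\varepsilon$. The extra factor $p$ is exactly what the exact covariance $(d-1)^2p^3q^{2d-3}$ supplies in the paper's proof of \Cref{prop:bound-hitting-time-dense}\ref{A2}.

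Two smaller points. First, on the upper side the correction factor is $dp/q$, not $dp$, and $q$ can be small (e.g.\ $d=c\log n$ with $c$ small, where $p_0=1-e^{-1/c}$). You flagged this but did not resolve it; it follows from $nd\,p\,q^{d-1}=dp\cdot n\,(q^d)^{1-1/d}\le(1+\varepsilon)\log n\cdot n^{-\varepsilon}\,n^{(1+\varepsilon)/d}=o(1)$, since $n^{1/d}=O(1)$ when $d=\Omega(\log n)$. Second, in the case $d=o(\log n)$, a greedy set of vertices at pairwise distance at least $3$ has size $\Omega(n/d^2)$, not $\Omega(n/d)$. An independent set of size at least $n/(d+1)$ already makes the isolation events independent, and either size suffices because $\varepsilon^d=n^{-o(1)}$.
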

This result strengthens a recent result of the first and second authors, Han, and Zhao~\cite{Chen-Chen-Han-Zhao} from $d = \omega(\log n)$ to all $d$.
In fact, we obtain a more refined characterization of the threshold values, identifying the specific point at which the threshold behavior transitions. In particular, we prove that the threshold behaves analogously to the complete graph case only when $d = \Omega(\log^2 n)$.
\begin{corollary}\label{cor:sharp-threshold-refine}
     There exists a constant $C > 0$ such that if $G$ is an $(n,d,\lambda)$-graph where $d/\lambda \geq C$ then 
     \begin{itemize}
         \item If $d = o(\log n)$, then for any $\varepsilon>0$, $G_p$ is non-Hamiltonian for $p = 1 - (nd)^{-(1-\varepsilon)/(d-1)}$ whp and Hamiltonian for $p = 1 -(nd)^{-(1+\varepsilon)/(d-1)}$ whp.
         \item If $d = c\log n$, then for any $\varepsilon>0$, $G_p$ is non-Hamiltonian whp for $p = 1-e^{-\log n/d +\varepsilon}$ and Hamiltonian for $p = 1-e^{-\log n/d - \varepsilon}$ whp.
         \item If $d = \omega(\log n)$ and $d=o(\log^2 n)$, then for any $\varepsilon>0$, $G_p$ is non-Hamiltonian for $p = (\log n+ \log\log n)/d-(1+\varepsilon)\log^2 n/(2d^2)$ whp and Hamiltonian for $p =(\log n+ \log\log n)/d-(1-\varepsilon)\log^2 n/(2d^2)$ whp.
         \item If $d = \Omega(\log^2 n)$, then $G_p$ is non-Hamiltonian for $p = (\log n+ \log\log n-\omega(1))/d$ whp and Hamiltonian for $p =(\log n+ \log\log n+\omega(1))/d$ whp.
     \end{itemize}
\end{corollary}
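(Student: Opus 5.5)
The plan is to reduce \Cref{cor:sharp-threshold-refine} to a statement about minimum degree $2$ in $G_p$, and then do a first/second moment computation on vertices of degree at most $1$. Couple $G_p$ with the random subgraph process: conditioned on $e(G_p)=m$, $G_p$ is distributed as $G_m$. By \Cref{thm:hitting-time}, whp $\tau_{\HC}=\tau_2$. Hence whp $G_p$ is Hamiltonian if and only if $\delta(G_p)\ge 2$. (Monotonicity plus the coupling makes this precise: the event $\{\tau_2\le m<\tau_{\HC}\}$ for $m=e(G_p)$ is contained in $\{\tau_2\ne\tau_{\HC}\}$.) So it suffices to show, in each regime, that $X$, the number of vertices of degree at most $1$ in $G_p$, satisfies $X>0$ whp at the lower value of $p$ and $X=0$ whp at the upper value.

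For the upper value, use the first moment. Write $q=1-p$. Then
\[
\E X = n\bigl(q^{d}+dpq^{d-1}\bigr)=nq^{d-1}(q+dp),
\]
and we check $\E X\to 0$ in each case.
\begin{itemize}
\item If $d=o(\log n)$ and $q=(nd)^{-(1+\eps)/(d-1)}$, then $nq^{d-1}(q+dp)\le nd\,(nd)^{-(1+\eps)}(1+o(1))\to0$. Here we use $p\to 1$ and $(nd)^{-1/(d-1)}\to0$ because $\log n/d\to\infty$.
\item If $d=c\log n$, take $q=e^{-\log n/d-\eps}$. Then $nq^{d-1}=O(e^{-\eps d})$, so $\E X=O(d\,e^{-\eps c\log n})\to0$.
\item For $d=\omega(\log n)$, use $\log(1-p)=-p-p^2/2-O(p^3)$. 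With $p=(\log n+\log\log n)/d-(1-\eps)\log^2 n/(2d^2)$ we get $q^{d}=\exp(-pd-p^2d/2+o(1))$, since $p^3d=o(1)$. Substituting, $pd+p^2d/2=\log n+\log\log n+\eps\log^2n/(2d)+o(\cdot)$, so $\E X\approx n\,dp\,e^{-\log n-\log\log n-\eps\log^2 n/(2d)}=\Theta(1)e^{-\eps\log^2n/(2d)}\to0$ when $d=o(\log^2n)$.
\item If $d=\Omega(\log^2 n)$, the $p^2d$ term is $O(1)$ and is absorbed by the $\omega(1)$ shift.
\end{itemize}

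For the lower value, use the second moment, which I expect to be the main obstacle. Take $X'$ to count vertices of degree exactly $1$ (or $0$ when $p\to1$ in the sparse regime; in that regime I would use the number of vertices of degree $\le1$ directly). The same computation with the reversed sign of $\eps$ gives $\E X'\to\infty$. The difficulty is $\Var X'$, since vertices $u,v$ with $N(u)\cap N(v)\neq\emptyset$ or $uv\in E(G)$ are correlated. For a pair $u,v$ the events depend on at most $2d$ edges with overlap $|N(u)\cap N(v)|+\mathbf 1_{uv\in E}$. If the overlap is $s$, then
\[
\P[\text{both}]\le \P[u]\P[v]\,q^{-s}\cdot\mathrm{poly}(d).
\]
Pairs at distance greater than $2$ contribute nothing to the covariance. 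The number of pairs at distance $\le2$ is at most $nd^2$, so we need $nd^2\max_s q^{-s}\P[u]^2\ll(\E X')^2$, i.e.\ $d^{O(1)}q^{-d}\ll n\cdot(\text{stuff})$. This holds when $q^{d}\approx n^{-1+o(1)}$ and $d$ is not too large, but it fails naively when $s$ is close to $d$.

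To handle this I would use the expander mixing lemma: for $d/\lambda\ge C$, co-degrees can be large only for few pairs. Alternatively, and more simply, I would restrict to a subset $S$ of vertices that are pairwise at distance at least $3$ in $G$. Such an $S$ can be chosen greedily with $|S|\ge n/(d^2+1)$. The events $\{\deg_{G_p}(v)\le1\}$ for $v\in S$ are then independent, since they depend on disjoint edge sets. So $\P[X=0]\le(1-\P[\deg v\le1])^{|S|}\le\exp(-\E X/(d^2+1))$.

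This independent-set trick works whenever $\E X/d^2\to\infty$, which holds for $d=o(\log n)$ and for $d=c\log n$ (there $\E X=n^{\Omega(1)}$). In the regimes $d=\omega(\log n)$, $\E X$ may grow only slowly, and there I would use the full second moment. Pairs at distance $\le2$ with co-degree $s\le d/2$ contribute $q^{-s}\le n^{1/2+o(1)}$, which suffices given $nd^2\cdot n^{1/2}\cdot n^{-2}\cdot(\E X)^2/n^2$-type bounds. Pairs with larger co-degree are rare: by the expander mixing lemma, each $u$ has $O(\lambda^2/d\cdot\ldots)$ such partners, at most $O(d)$. Their contribution is bounded by $\sum_u\P[u]\cdot O(d)\P[\text{v's remaining }d-s \text{ edges}]$, which is small because $q^{d/2}=n^{-1/2+o(1)}$. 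Chebyshev then gives $X>0$ whp, completing all four cases, and \Cref{cor:sharp-threshold} follows by reading off $p_0$.
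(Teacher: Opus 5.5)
Your reduction and your first-moment computations follow the paper. You realise $G_p$ as $G_M$ with $M=e(G_p)$, so that on the event $\tau_2=\tau_{\HC}$ from \Cref{thm:hitting-time}, Hamiltonicity of $G_p$ is equivalent to $\delta(G_p)\ge 2$. The paper then checks \ref{A1} and \ref{A2} of \Cref{prop:bound-hitting-time-dense} for $\E X=n\bigl((1-p)^d+dp(1-p)^{d-1}\bigr)$ in each regime. Your independent-set argument also works in the first two regimes, where $\E X$ is at least $(nd)^{\eps}$ or polynomial in $n$.

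\textbf{The gap is the second-moment step for $d=\omega(\log n)$.} This is exactly where that step is needed: $\E X$ can grow arbitrarily slowly there (e.g.\ $e^{\omega(1)}$ in the fourth case), while $d$ can be polynomial, even linear, in $n$. Your argument rests on a false premise. You claim that $u,v$ with a common neighbour are correlated, with overlap $|N(u)\cap N(v)|+\mathbf 1_{uv\in E(G)}$. But $\deg_{G_p}(u)$ depends only on the $d$ edges at $u$, and for a common neighbour $w$ the edges $uw$ and $vw$ are distinct. So the edge sets at $u$ and at $v$ share at most the single edge $uv$.

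Because you work with the wrong dependency structure, the bounds you sketch do not close:
\begin{itemize}
\item The $nd^2$ pairs with co-degree $s\le d/2$ contribute an error of order $d^{2}n^{-1/2+o(1)}(\E X)^2$, times your $\mathrm{poly}(d)$ factor. This is not $o((\E X)^2)$ once $d\ge n^{1/4}$.
\item For co-degree $s>d/2$, the factor for $v$'s private edges is $q^{d-s}\ge q^{d/2}$, not $\le q^{d/2}$. So the claimed smallness via $q^{d/2}=n^{-1/2+o(1)}$ runs the wrong way. Within your own model these pairs would contribute about $\E X\cdot O(d)$, forcing $\E X\gg d$, which fails here.
\item The expander mixing count of high co-degree partners does not fix this.
\end{itemize}

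\textbf{The fix, which is the paper's proof of \ref{A2}.} $X_u$ and $X_v$ are independent unless $uv\in E(G)$. For adjacent $u,v$, conditioning on whether $uv\in G_p$ gives exactly $\mathrm{Cov}(X_u,X_v)=(d-1)^2p^3(1-p)^{2d-3}$. Hence $\Var X\le \E X+nd(d-1)^2p^3(1-p)^{2d-3}$.
\begin{itemize}
\item If $dp\le 1-p$, compare with $(\E X)^2\ge n^2(1-p)^{2d}$. The ratio is at most $(dp/(1-p))^3/n\to0$.
\item If $dp\ge 1-p$, compare with $(\E X)^2\ge n^2d^2p^2(1-p)^{2d-2}$. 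You then need $n(1-p)/(dp)\to\infty$. This follows from $ndp(1-p)^{d-1}\to\infty$ together with $d^2p^2(1-p)^{d-2}=O(1)$.
\end{itemize}
This works for every $d$-regular $G$ and every $d$, with no expander input and no independent-set argument.

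A minor point on the third regime: $dp^3=\Theta(\log^3 n/d^2)$ need not be $o(1)$, and neither need the cross term $\log n\log\log n/d$ coming from $dp^2/2$. Both are $o(\log^2 n/d)$, so they are absorbed into the $\pm\eps\log^2 n/(2d)$ term, which is all you need.
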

We remark that in the third case, if $d=\omega(\log n)$ and $d=o(\log^2 n )$, the bound does not simplify to $p = (\log n+ \log\log n \pm \omega(1))/d$. 

\Cref{cor:sharp-threshold} and~\Cref{cor:sharp-threshold-refine} showed that $(n,d,\lambda)$ graphs with sufficiently large spectral gap are not only Hamiltonian but robustly so. These results add to a fruitful line of research on the robustness of graph properties.
See~\cite{sudakov2017robustness} for a systematic treatment of this topic and~\cite{joos2023robust,pham2022toolkit,allen2024robust,bastide2024random,han2025rainbow,chen2024thresholds} for some recent developments.

\subsection{$k$-edge disjoint Hamilton cycles}
Generalising the Hamiltonicity hitting time result, the study of $k$ edge-disjoint Hamilton cycles in random graphs has attracted considerable attention.
Let $k\mathrm{HC}$ denote the property of containing $k$ edge-disjoint Hamilton cycles.
A classical result by Bollob\'{a}s and Frieze~\cite{BF1985} established that if $G = K_n$, then for $k\in \mathbb N$, the hitting time for $k$ edge-disjoint Hamilton cycles coincides with the hitting time for minimum degree at least $2k$ whp, i.e., $\tau_{2k}(G) = \tau_{k\HC}(G)$ whp.
Condon, Espuny D\'{i}az, Gir\~{a}o, K\"{u}hn and Osthus~\cite{condon-et-al} extended the result to hypercubes. 
Alon and Krivelevich~\cite{Alon-Krivelevich} extended this result to $(n, d, \lambda)$-graphs with $d \ge C n \log \log n / \log n$ and $\lambda \le cd^2/n$ for some absolute constants $c, C>0$.
In Frieze's survey on Hamiltonicity, he posed the question of whether this result remains valid when $k$ is allowed to grow to infinity with $n$~\cite[Problem 26]{Frieze-survey}. 
Alon and Krivelevich~\cite{Alon-Krivelevich} confirmed it for $k = o(\log \log n)$\footnote{Alon and Krivelevich stated that their result extends to $k = o(\log \log n)$ in their concluding remarks, which was missed by Frieze when he raised the question.}. We advanced it further by proving it for $k = O(\log n)$.
\begin{theorem}\label{thm:k-edge-disjoint-HC}
    There exists a constant $C > 0$ such that if $G$ is an $(n,d,\lambda)$-graph with $d/\lambda \geq C$ and $\hat{d}=\min\{d,10\log n\}$, then we have $\tau_{2k}(G) = \tau_{k\HC}(G)$ whp for $k\le 10^{-10}\hat{d}/C^2$.
\end{theorem}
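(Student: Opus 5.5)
We would prove \Cref{thm:k-edge-disjoint-HC} by extending the strategy for \Cref{thm:hitting-time}, finding the $k$ Hamilton cycles one at a time inside $G_{\tau_{2k}}$. The plan is to split the graph at time $\tau:=\tau_{2k}$ into a part $G_{\tau}$ that is a ``typical'' random subgraph with edge density $p\approx \log n/d$ (or $p$ close to $1$ when $d=O(\log n)$), together with the set $\SMA$ of vertices of low degree (degree less than $\delta\hat d$ for a small $\delta$). The first step is to establish whp structural properties of $G_\tau$:
\begin{enumerate}[(i)]
\item vertices in $\SMA$ are far apart, meaning no two lie within distance, say, $4$ in $G_\tau$, and each has at most one neighbour of degree below $\delta \hat d$;
\item every vertex outside $\SMA$ has at least $\delta\hat d$ neighbours outside $\SMA$;
\item $G_\tau$ has expansion: every set $S$ with $|S|\le n/(C'\,\hat d/\log\hat d)$ satisfies $|N(S)|\ge 10k|S|$, and large disjoint sets of size roughly $\lambda n/d$ are joined by many edges.
\end{enumerate}
Property (iii) follows from the expander mixing lemma together with Chernoff bounds. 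The union bounds over sets work because $d/\lambda\ge C$ and $k\le 10^{-10}\hat d/C^2$: the factor $k$ in the required expansion is absorbed by $\hat d/C^2$. Properties (i) and (ii) use the fact that, at time $\tau_{2k}$, a vertex has degree at most $\delta \hat d$ with probability about $n^{-1+o(1)}$ when $d\gtrsim\log n$. When $d\ll\log n$ we instead use direct counts over the $d$-regular structure, noting that $\hat d=d$ in that regime.

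Next, we proceed iteratively for $i=1,\dots,k$. Maintain the remaining graph $H_i=G_\tau\setminus (C_1\cup\dots\cup C_{i-1})$, which still has minimum degree at least $2(k-i+1)\ge 2$. Because each removed cycle costs every vertex exactly two edges, $H_i$ loses at most $2k\le 10^{-9}\hat d$ degree at each vertex. Properties (i)–(iii) therefore persist in $H_i$ with slightly worse constants. We pre-reserve edges for the small vertices: for each $v\in\SMA$ we fix two distinct edges of $H_i$ at $v$ to be used in $C_i$, which is possible by the degree condition. Since $\SMA$ vertices are far apart, these form disjoint paths of length $2$. We contract each such path into a single vertex, or equivalently demand that the cycle contain these paths. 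Every remaining vertex is then rich, so $H_i$ minus the reserved structure is a robust expander.

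To get a Hamilton cycle through prescribed disjoint paths, we invoke the extendability and absorption machinery underlying \Cref{thm:draganic-et-al-ndlambda}, as reused in the proof of \Cref{thm:hitting-time}. Concretely, we need a version of the Dragani\'c et al.\ result for graphs satisfying the expansion property (iii) and an edge-between-large-sets property, rather than for $(n,d,\lambda)$-graphs themselves. We assume this was established as the engine of \Cref{thm:hitting-time}, and it is applied to the contracted graph.

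The main obstacle is ensuring that the expansion hypotheses survive $k$ rounds of deletion when $k$ is as large as $c\hat d$. The bound $|N(S)|\ge 10k|S|$ suffices only if deleting up to $2k$ edges per vertex does not destroy neighbourhoods of small sets. A first attempt would handle this by a degree-reserve argument: every non-small vertex has $\ge \delta\hat d\gg 2k$ neighbours, so at most a $2k/(\delta\hat d)$ fraction is lost. Small sets consisting partly of $\SMA$ vertices are handled via (i), since each such vertex always retains at least $2$ edges into rich vertices. The case $d=o(\log n)$, where $p$ is near $1$ and $\hat d=d$, requires separately checking that the low-degree vertices remain sparse, using the threshold $p=1-(nd)^{-(1\pm\eps)/(d-1)}$ computation.
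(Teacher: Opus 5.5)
Your outer structure agrees with the paper's. You peel the cycles off one at a time from $G_{\tau_{2k}}$. You measure smallness in $G_{\tau_{2k}}$ with a threshold much larger than $2k$, so the small set never grows. You force two edges at each small vertex and apply the forced-edge engine \Cref{thm:robust-Hamiltonicity} to the rest.

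The gap is in how expansion survives the deletion of $k-1$ \emph{arbitrary} Hamilton cycles. Your property (iii) demands $|N_{G_\tau}(S)|\ge 10k|S|$, so that the at most $2k|S|$ lost neighbours can simply be subtracted. But for $k$ near $10^{-10}\hat d/C^2$ with $\hat d=10\log n$, this factor tends to infinity, and the mixing lemma plus Chernoff cannot give it. The mixing lemma only bounds $e_G(W)\le d|W|^2/(2n)+\lambda|W|/2$. So for $W=S\cup N(S)$ with $|W|=(10k+1)|S|$, the random subgraph may contain on the order of $\hat d(10k+1)|S|/C$ edges inside $W$. This exceeds the roughly $\delta\hat d|S|/2$ edges you are trying to rule out once $k\gtrsim\delta C$. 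So the factor $k$ is \emph{not} absorbed by $\hat d/C^2$, and the union bound only gives a constant expansion factor.

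Indeed, with $\lambda=d/C$ the mixing lemma does not rule out $G$ containing a complete bipartite graph between a set $S$ of size $d/C^2$ and a set $T$ of size $d$ with $N_G(S)=T$. Then $|N_{G_\tau}(S)|\le d=C^2|S|<10k|S|$ once $\log n\gg C^4$, although the vertices of $S$ are typically not small. Moreover, (iii) is silent about sets with $n\log\hat d/(C'\hat d)\lesssim |S|\le n/(2C_0)$. A $C_0$-expander must also handle these, and joinedness of sets of size about $\lambda n/d$ does not reach them.

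Your degree-reserve remark (every non-small vertex keeps all but $2k\ll\delta\hat d$ of its edges) is the right first half. But losing few edges per vertex does not control how many \emph{distinct} neighbours a set keeps. The missing ingredient is a local sparsity bound for $G_{\tau_{2k}}$ that is monotone under deletion, as in \Cref{lem:few-edges-inside}: whp every $W$ with $|W|\le n/C^{1/8}$ spans fewer than $\hat d|W|/(11^6C_1)$ edges. Take a set $S$ of non-small vertices and any $i<k$. By \Cref{lem:edge-expansion}, at least $(10^{-5}\hat d-2k)|S|$ edges of $G^{(i)}$ leave $S$, and they all land in $N_{G^{(i)}}(S)$. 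Local sparsity of $S\cup N_{G^{(i)}}(S)$ in $G_{\tau_{2k}}$ then forces $|N_{G^{(i)}}(S)|\ge C_1|S|$ for a \emph{constant} $C_1$, for all $|S|$ up to roughly $n/C^{1/4}$. Larger sets are handled by the stronger joinedness of \Cref{lem:logn_expansion_property}, which gives at least $\hat d|S|/C>2k|S|$ edges between large sets. In short, aim for constant expansion in each $G^{(i)}$ via edge counts, not expansion $10k$ in $G_{\tau_{2k}}$.

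Two smaller points. You also need $|\SMA|\le n^{0.11}$ to meet the hypothesis $|E_0|\le n^{0.12}$ of \Cref{thm:robust-Hamiltonicity}. And contracting the reserved paths is not equivalent to forcing them, so use the forced-edge version.
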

When $d \leq 10\log n$, one may take $k = 10^{-10}d/C^2$, which is optimal up to a constant factor in front of $d$. However, the theorem becomes less effective as one increases $d$ since now $d/k$ can be unbounded. The reason is that for larger $k = O(d)$, the nature of the problem changes drastically, and one essentially needs to solve a packing problem. We give a detailed explanation in the concluding remarks.

Now we give a brief proof overview, highlighting a technical result that might be of independent interest.

\subsection{Robust Hamiltonicity of $C$-expanders}
For a constant $C>0$, a graph $G$ is called a \emph{$C$-expander} if for every subset $S \subseteq V(G)$ with $|S| \leq n/2C$, we have $|N(S)| \geq C|S|$ and for every two disjoint subsets $S, T \subseteq V(G)$ with $|S|, |T| \geq n/2C$, there is at least one edge between them.~\Cref{thm:draganic-et-al-ndlambda} is in fact a direct corollary of the stronger result that Dragani\'{c}, Montgomery, Munh\'{a}-Correia, Pokrovskiy, and Sudakov~\cite{Draganic-et-al} proved:
\begin{theorem}[\cite{Draganic-et-al}]\label{thm:expander-hamiltonicity}
    There exists a constant $C_0 > 0$ such that if $G$ is an $n$-vertex $C_0$-expander, then $G$ is Hamiltonian.
\end{theorem}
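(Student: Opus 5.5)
The plan is to follow the absorption strategy of Dragani\'{c}, Montgomery, Munh\'{a}-Correia, Pokrovskiy and Sudakov~\cite{Draganic-et-al}, built on the theory of extendable subgraphs in expanders (Friedman--Pippenger, Glebov--Johannsen--Krivelevich, Montgomery). The basic tool is the \emph{extendability} lemma. Call a subgraph $F$ with $|F|\le n/C'$ extendable if small sets still expand outside $F$ (roughly $|N(S)\setminus V(F)|\ge C''|S|$ for small $S$). Then in a $C_0$-expander one can:
\begin{itemize}
\item attach a new leaf, or a path of prescribed logarithmic length, to an extendable $F$ while keeping it extendable;
\item remove leaves or paths while preserving extendability.
\end{itemize}
Combined with the Pósa rotation--extension technique, these give the connecting lemma I will use throughout. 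For any two vertices $x,y$ of an extendable $F$, one can join $x$ to $y$ by a path of length $O(\log n)$ through unused vertices. The reason is that rotations from each endpoint grow reachable sets to size at least $n/2C_0$. The second expansion axiom then supplies an edge between the two large sets.

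With this tool, the proof will proceed in four steps.
\begin{enumerate}
\item Set aside a small \emph{absorbing structure} $A$. Its required property is that for every sufficiently small set $L$ of leftover vertices inside a designated reservoir $R$, there is a path with prescribed endpoints whose vertex set is exactly $V(A)\cup L$.
\item Outside $A$, greedily cover all but at most $|R|/C$ vertices by few long paths, using extendability so that the union stays extendable.
\item Cover the remaining vertices outside $R$ using their expansion into $R$. This pushes the uncovered set into the reservoir.
\item Concatenate all paths, together with the absorber, into a single cycle via the connecting lemma. The absorber then swallows the remaining reservoir vertices $L$.
\end{enumerate}

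The main obstacle is constructing the absorber in step 1. A $C_0$-expander may have minimum degree only about $C_0$, so the usual absorbers, which rely on many vertex-specific gadgets coming from high degree or from randomness, are unavailable. The first thing I would try is the DMMPS idea of implementing a \emph{sorting network} inside the expander. Concretely, I would build a bounded-degree auxiliary template, of depth $O(\log n)$, that can realise any permutation. Each comparator of the template would be embedded as a small gadget of two short paths whose pairing of endpoints can be switched. All gadgets would be embedded disjointly by repeatedly applying the extendability lemma, since the template has bounded degree and size $o(n)$.

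Such a structure yields linkages with arbitrary endpoint matchings. From this I get the ``include or exclude each reservoir vertex'' property: every $v\in R$ gets a local switch between a path through $v$ and a path avoiding $v$, and the sorting network lets me re-route endpoints consistently whatever subset $L$ is selected. The delicate part will be verifying that all the required short connections exist simultaneously. This needs careful bookkeeping of the expansion lost at each embedding. It is also where the constant $C_0$ must be taken large.
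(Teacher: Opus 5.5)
Your plan has a genuine gap, and it is not confined to the absorber you leave as a sketch. The vertex-absorption framework around it (cover almost everything, push the leftover into a reservoir $R$, absorb it) does not go through in a $C_0$-expander.

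\textbf{Step 2 cannot reach the leftover size you need.} The extendability method (\Cref{lem:extendable}) only lets you grow $H$ while $|H|\le n-5nD/C-\ell$. So greedy path-building stalls with $\Omega(n/C_0)$ vertices uncovered, a linear number. Your absorber, by contrast, can only serve a sublinear reservoir. In your design every $v\in R$ has its own switch whose path ends the sorting network must re-route. A bounded-degree linking structure on $N$ terminal pairs has $\Omega(N\log N)$ vertices, because it must realise all $N!$ bijections; \Cref{lem:exist-linking-structure} allows up to $N^{1.1}$. Hence $|R|=o(n)$, and ``all but $|R|/C$ vertices covered'' is out of reach.

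\textbf{Step 3 has no mechanism.} The expander axioms give no control over neighbours in a designated set $R$. When degrees are bounded by $\Delta$, all but $\Delta|R|=o(n)$ vertices have no neighbour in $R$. A leftover vertex whose $O(1)$ neighbours are already on paths therefore cannot be pushed into $R$ or inserted anywhere.

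\textbf{Step 4 fails for the same reason.} The connecting lemma needs linearly many unused vertices as slack, and these no longer exist once almost everything is covered. A small reservoir does not inherit expansion, so you cannot route connections through it either.

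\textbf{The missing idea.} This comes from \cite{Draganic-et-al} and is reproduced in \Cref{subsec:robust-hamiltonicity} by taking $E_0=\emptyset$. Never leave vertices uncovered, and absorb \emph{connectivity} instead of vertices.
\begin{enumerate}
\item Embed a bounded-degree $(A,B)$-linking structure $X$ with $|A|=|B|=n^{0.9}$ by extendability, and hang $n^{0.9}$ disjoint paths of length $n^{0.1}/5$ between $A$ and $B$ (\Cref{lem:structure-decomp-new}).
\item Cover the remaining expander $G[Z]$ \emph{completely} by a linear forest with at most $n^{0.8}$ paths (\Cref{lem:few-paths}).
\item Use Pos\'{a} rotations (\Cref{lem:reduce-endpoint}) to remove endpoints outside $A\cup B$ two at a time. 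Each step changes only $O(\log n)$ edges, so over at most $2n^{0.8}$ steps most of the long $A$--$B$ paths survive. This keeps a constant fraction of vertices on paths of length between $100$ and $\sqrt n$, which is exactly the hypothesis that makes the rotations work.
\item The result is a spanning linear forest of $V(G)\setminus(X\setminus(A\cup B))$ with endpoint set exactly $A\cup B$, joined up in an order you do not control. $G[X]$ then realises whichever bijection $A\to B$ closes these paths, together with all of $X$, into one Hamilton cycle.
\end{enumerate}
So the sorting network is the right tool, but its job is to absorb the unknown pairing of endpoints, not a leftover vertex set.
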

While this theorem is a powerful tool for proving Hamiltonicity, it does not directly apply to the hitting time graph $G_{\tau_2}$ in our setting.
Indeed, for any host graph $G$, the hitting time graph $G_{\tau_2}$ contains a degree $2$ vertex, which implies that $G_{\tau_2}$ cannot be a $C$-expander for $C>2$.
Thus, we cannot directly apply the result of~\cite{Draganic-et-al} to conclude that $G_{\tau_2}$ is Hamiltonian even when $G = K_n$.
To overcome this issue, we show that a $C$-expander is Hamiltonian even when we force it to contain some specific edges in the Hamilton cycle, which may be of independent interest.
\begin{restatable}{theorem}{robust}\label{thm:robust-Hamiltonicity}
  There exists a constant $C>0$ such that the following holds for sufficiently large $n$.
    Let $G$ be an $n$-vertex $C$-expander and let $E_0 \subseteq E(G)$ be the set of edges with $|E_0| \leq n^{0.12}$ and any two edges in $E_0$ are at least distance $3$ apart in $G$.
    Then, $G$ contains a Hamilton cycle containing all edges in $E_0$.
\end{restatable}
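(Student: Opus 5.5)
We want a Hamilton cycle of a $C$-expander $G$ that uses every edge of $E_0$. The natural plan is to contract each forced edge into a single vertex and apply \Cref{thm:expander-hamiltonicity} to the contracted graph. The obstacle is that a Hamilton cycle there, passing through the contracted vertex $w_e$ for $e=uv$, must enter via a neighbour of $u$ and leave via a neighbour of $v$, not both through $u$. So we use a gadget that records which endpoint is used on each side.

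\textbf{Auxiliary graph.} For each $e=uv\in E_0$, partition the remaining neighbours of $u$ and $v$ into two sides. Vertices adjacent only to $u$ are assigned to $u$, and similarly for $v$. Common neighbours are split evenly at random. The distance-$3$ condition ensures these neighbourhoods are disjoint from the other forced edges and from their neighbourhoods. Now delete all edges from $u$ to neighbours assigned to $v$, and vice versa, and replace $u,v$ by one vertex $w_e$ whose neighbourhood is the union of the two sides. We also need the cycle to enter from the $u$-side and leave from the $v$-side. For this we split further: $w_e$ keeps only the $u$-side as neighbours, and a new degree-two path $x_e - w_e' \dots$ is introduced. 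A cleaner formulation: reserve a vertex $z_e$, chosen in a small random set far from $E_0$, and form $H$ from $G$ by replacing $u$ with a vertex adjacent to $N(u)\cup N(v)$ minus the constraint. I would first try the simplest version: contract $uv$ into $w_e$ with $N(w_e)=N(u)\cup N(v)$, find a Hamilton cycle in $H$, and then repair locally at each $w_e$.

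\textbf{Repair.} Suppose the cycle in $H$ uses $a,w_e,b$ with both $a,b\in N(u)$. Then expanding to $a,u,v,b$ is not possible. Instead we modify the cycle with a P\'osa rotation inside $G$. The expansion property gives many rotation endpoints, so we can replace $b$ by some neighbour $b'$ of $v$. Each rotation changes only $O(1)$ edges near $w_e$, except along one rotated segment. Since $|E_0|\le n^{0.12}$, the repairs can be performed sequentially. We must ensure they do not break earlier repairs, which holds if rotation pivots avoid an $O(n^{0.12})$-size protected set of vertices already fixed.

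\textbf{Alternative that avoids rotations.} A more robust option is to first apply \Cref{thm:expander-hamiltonicity} with a larger constant. Choose $C$ much bigger than $C_0$ and form $H$ by contracting each $e$ to $w_e$, but keeping only edges to neighbours of $v$ on one side. Concretely, $H$ is the directed-to-undirected gadget in which $w_e$ is replaced by a path $w_e^1w_e^2w_e^3$ of three new vertices, with $w_e^1$ adjacent to $N(u)$ and $w_e^3$ adjacent to $N(v)$. The middle vertex $w_e^2$ has degree $2$, which breaks expansion, so we instead directly absorb: show $H'$, obtained by deleting $u,v$, adding $w_e$ adjacent to $N(u)$, and requiring one cycle edge to go to $N(v)$, remains an expander.

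The core technical step, which I expect to be the main obstacle, is verifying that the contracted graph $H$ is a $C_0$-expander, i.e.\ that $|N_H(S)|\ge C_0|S|$ for small $S$ and that large sets are joined by edges. Since $|E_0|\le n^{0.12}$, only few vertices are touched. A contracted vertex inherits at least the expansion of $\{u,v\}$, and deleting at most $2|E_0|$ vertices costs at most $n^{0.12}\ll C|S|$ unless $S$ is tiny. For tiny $S$ we use the expansion of $G$ together with distance $3$: each $S$-vertex sees at most one contracted vertex, so its neighbourhood shrinks by at most a constant factor. Taking $C\ge 10C_0$, this yields a $C_0$-expander, and the cycle is lifted back with the repair step above.
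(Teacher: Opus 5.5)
\textbf{The proposal has a genuine gap.} It sits in the step you treat as routine: lifting a Hamilton cycle of the contracted graph $H$ back to $G$. Showing that $H$ is an expander, which you flag as the main obstacle, is the easy part.

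Suppose the cycle in $H$ passes through $w_e$ as $a\,w_e\,b$ with $a,b\in N(u)\setminus N(v)$. What you actually hold in $G$ is then only a Hamilton path, say $v\,u\,a\cdots b$. ``Replacing $b$ by a neighbour $b'$ of $v$'' means closing this Hamilton path into a Hamilton cycle while keeping the end $v$ fixed.

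\begin{itemize}
\item Pos\'a rotations in a $C$-expander only guarantee that the set $R$ of reachable endpoints is large (linear in $n$). Nothing forces $R$ to meet $N(v)$, which may have only about $C$ vertices.
\item Rotating both ends does not help. You would need an edge between some endpoint $y$ and the particular set of partner endpoints obtainable with $y$ fixed. The joinedness condition (an edge between any two disjoint large sets) says nothing about such paired edges.
\end{itemize}

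This closing step is exactly the difficulty behind \Cref{thm:expander-hamiltonicity}; if rotations alone could do it, that theorem would have an elementary proof. Moreover, you would need such a repair for up to $n^{0.12}$ forced edges.

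Your ``alternative'' has the same problem. Requiring one cycle edge at $w_e$ to go into $N(v)$ is the original constraint restated. \Cref{thm:expander-hamiltonicity} is a black box and gives no control over how the cycle passes through a prescribed vertex.

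\textbf{How the paper avoids lifting altogether.}
\begin{itemize}
\item By the distance-$3$ condition, every vertex has at most two neighbours in $V(E_0)$, so $E_0$ is an extendable subgraph.
\item Using \Cref{lem:extendable}, the paper embeds a bounded-degree $(A,B)$-linking structure (a sorting network) disjoint from $V(E_0)$.
\item It then threads all edges of $E_0$, one after another, onto a single $A$--$B$ path $P_0$, alongside many other $A$--$B$ paths (\Cref{lem:structure-decomp-new}).
\item All rotation-type modifications (the endpoint-reduction \Cref{lem:reduce-endpoint}) are performed in $G'[Z\cup(Y\setminus V(P_0))\cup A\cup B]$. This graph excludes $P_0$, so no forced edge is ever broken.
\item The final closing, which rotations cannot do, is handled by the linking structure: it routes the $A$--$B$ paths into a single Hamilton cycle.
\end{itemize}

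So your proposal is missing two ideas: (i) quarantining the forced edges in a path that rotations never touch, and (ii) a linking/absorbing device to close the cycle.
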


Our proof strategy consists of two main parts. First, we analyse the structural properties of the graph $G_{\tau_2}$. We show that $G_{\tau_2}$ can be decomposed into a massive core with excellent expansion property (a $C$-expander) and a small, scattered set of low-degree vertices. Crucially, we show that these low-degree vertices are far apart, allowing us to ``clean-up'' the graph
by deleting these vertices and adding auxiliary edges between (some of) their neighbours. 
Then, by applying~\Cref{thm:robust-Hamiltonicity}, we can find a Hamilton cycle containing all the auxiliary edges in the auxiliary expander, which becomes a Hamilton cycle in the original graph after ``decontracting'' the auxiliary edges.
To prove~\Cref{thm:robust-Hamiltonicity}, we adapt the Pos\'{a} rotation and sorting network techniques inspired by~\cite{Draganic-et-al}.
The main difference for our setting is that we need to carefully control the edges in $E_0$ and ensure that they are not destroyed during the Pos\'{a} rotations.

To prove \Cref{thm:k-edge-disjoint-HC}, we adapt the proof of \Cref{thm:hitting-time} to show that after deleting any $k-1$ disjoint Hamilton cycles from the hitting time graph $G_{\tau_2}$, the remaining graph still contains a Hamilton cycle.
The main difference is that we need to show that the remaining graph still has good expansion properties after deleting $k-1$ Hamilton cycles, which requires showing an additional edge-expansion property instead of a single vertex-expansion property.

\Cref{cor:sharp-threshold-refine} follows from~\Cref{thm:hitting-time} by a first and second moment computation about the number of vertices with degree at most $1$.

\subsection{Paper organization} The paper is organized as follows. In~\Cref{sec:preliminaries}, we introduce notation and standard tools that are used throughout the paper. \Cref{subsec:hitting-time-properties} contains the proof of expansion properties of $G_{\tau_2}$ and the proof of~\Cref{thm:hitting-time} assuming \Cref{thm:robust-Hamiltonicity}. \Cref{thm:robust-Hamiltonicity} is proved in~\Cref{subsec:robust-hamiltonicity}.
In~\Cref{subsec:k-edge-disjoint-HC}, we discuss how the proof of~\Cref{thm:hitting-time} can be extended to~\Cref{thm:k-edge-disjoint-HC}.
\Cref{subsec:deducing-corollaries} contains the proof of~\Cref{cor:sharp-threshold} and~\Cref{cor:sharp-threshold-refine}. In~\Cref{sec:concluding-remarks}, we conclude the paper with a few remarks and open problems. At last, in~\Cref{sec:appendix}, we prove the hitting time result for random graphs mentioned in the introduction.

\subsection*{Acknowledgement}
This work was initiated while Yaobin Chen, Yu Chen, and Yiting Wang were visiting the ECOPRO group at the Institute for Basic Science (IBS) as participants in the ECOPRO Summer Student Research Program. The authors are grateful to the organizers, particularly Hong Liu, and the members of IBS for their hospitality and support. Additionally, the authors wish to thank Matthew Kwan and Patrick Morris for their valuable advice on the presentation of the paper.

\section{Preliminaries}\label{sec:preliminaries}
\subsection{Notation}
For a graph $G$, we let $e(G)$ denote the number of its edges. For any two disjoint sets of vertices $S,T\subseteq V(G)$, we write $e_{G}(S,T)$ for the number of edges between $S$ and $T$ in $G$. For a set of vertices $S$, we denote by $S^c = V(G)\setminus S$.
We denote the maximum degree of a graph $G$ by $\Delta(G)$. We denote internal neighborhood by $\Gamma(S) = \{u\in V(G): N(u)\cap S\neq \emptyset\}$ and the external neighborhood by $N(S) = \Gamma(S)\setminus S$. 
For a vertex $v\in V(G)$ and a vertex set $S\subseteq V(G)$, we denote the number of neighborhoods of $v$ in $S$ by deg$_S(v)$.

For two vertices $u,v\in V(G)$, the distance between them, denoted as $d_G(u,v)$, is the length of the shortest path between them in $G$, which is set to $\infty$ if there is no path between $u$ and $v$.
For two edges $e_1$ and $e_2$, the distance between them, denoted as $d_G(e_1,e_2)$, is defined as $\min_{(u,v)\in V(e_1)\times V(e_2)}d_G(u,v)$. 

We write $a = (1\pm \delta)b$ for $(1-\delta)b\leq a\leq (1+\delta)b$. 
The base of all logarithms and exponential functions is by default $e$. We denote the set of natural numbers as $\mathbb N = \{1,2,\dots\}$.

We use the usual asymptotic notation. Given two functions $f(n)$ and $g(n)$, if $f(n)/g(n) \rightarrow 0$ when $n\rightarrow \infty$, then we write $f(n) = o(g(n))$ and $g(n) = \omega(f(n))$. If there exists an absolute constant $C > 0$ for which $|f(n)|\leq C\cdot |g(n)|$ for all $n\in \mathbb N$, then we write $f(n) = O(g(n))$ and $g(n) = \Omega(f(n))$.

We say a random variable $X$ is \emph{stochastically dominated} by another random variable $Y$ if for every real number $t$, we have $\mathbb{P}(X\ge t)\le \mathbb{P}(Y\ge t)$. 

\subsection{Asymptotic equivalence of random graph models}\label{subsec:asymptotic-equiv}
Let $G$ be a graph on $n$ vertices with $N$ edges. Let $p\in [0,1]$ and $m\in [0,N]$ be an integer. We define $G_p$ to be a subgraph of $G$ that keeps each edge independently with probability $p$ and $G_m$ a uniformly random subgraph of $G$ with precisely $m$ edges.

We record the asymptotic equivalence between the model $G_p$ and $G_m$ for monotone properties for $p = m/N$: 
\begin{lemma}[{\cite[Corollary 1.16]{janson-book}}]\label{lem:asymptotic-equiv}
    Let $\delta>0$ be a constant.
    Let $G$ be a graph on $n$ vertices with $N$ edges with $N \to \infty$ as $n \to \infty$ and $\mathcal P$ be a monotone graph property.
    For any $0 \leq m \leq (1-\delta)N$, $p=m/N$, we have 
    \begin{itemize}
        \item If $G_p \in \mathcal{P}$ whp, then $G_m \in \mathcal{P}$ whp.
        \item If $G_p \notin \mathcal{P}$ whp, then $G_m \notin \mathcal{P}$ whp.
    \end{itemize}
\end{lemma}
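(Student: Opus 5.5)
We prove the first bullet by coupling $G_p$ with the uniform model; the second bullet then follows from the first. The key observation is that, conditioned on $|E(G_p)| = k$, the graph $G_p$ is distributed exactly as $G_k$. So, writing $M = |E(G_p)| \sim \Bin(N,p)$,
\[
\Pr(G_p \notin \mathcal P) = \sum_{k=0}^{N} \Pr(M=k)\,\Pr(G_k\notin\mathcal P).
\]

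First suppose $\mathcal P$ is increasing. Then $\Pr(G_k\notin\mathcal P)$ is non-increasing in $k$. To see this, couple $G_k \subseteq G_{k+1}$ through the random edge ordering of the graph process: $G_k$ consists of the first $k$ edges and $G_{k+1}$ of the first $k+1$. Keeping only the terms with $k\le m$ in the sum gives
\[
\Pr(G_p\notin\mathcal P)\ \ge\ \Pr(M\le m)\,\Pr(G_m\notin \mathcal P).
\]
Since $p=m/N$, the mean $Np=m$ is an integer. By the classical fact that the median of $\Bin(N,p)$ equals $Np$ whenever $Np$ is an integer, we get $\Pr(M\le m)\ge 1/2$. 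Hence $\Pr(G_m\notin\mathcal P)\le 2\Pr(G_p\notin\mathcal P)\to 0$.

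If $\mathcal P$ is decreasing, we argue symmetrically. Now $\Pr(G_k\notin\mathcal P)$ is non-decreasing in $k$, so we keep the terms with $k\ge m$ and use $\Pr(M\ge m)\ge 1/2$, again by the median fact. This settles the first bullet for all monotone $\mathcal P$.

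For the second bullet, note that the complement $\neg\mathcal P$ of a monotone property is again monotone, with the direction reversed. The hypothesis $G_p\notin\mathcal P$ whp says exactly that $G_p\in\neg\mathcal P$ whp, so the first bullet applied to $\neg\mathcal P$ gives $G_m\in\neg\mathcal P$ whp, as required.

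The only genuinely non-trivial input is the lower bound $\Pr(M\le m),\Pr(M\ge m)\ge c$ for some constant $c>0$. If one prefers not to cite the median theorem, a weaker constant suffices, and it can be obtained from the central limit theorem or the Berry--Esseen bound when $Np(1-p)\to\infty$. The degenerate regimes need separate handling. When $m$ is bounded, $\Pr(M=m)$ is bounded below by a Poisson approximation. The assumption $m\le(1-\delta)N$ keeps $p$ bounded away from $1$, so $1-p$ never degenerates. I expect this case analysis to be the only mildly fiddly step; citing the exact median result avoids it entirely.
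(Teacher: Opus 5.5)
Your proof is correct. The paper gives no argument of its own for this lemma; it simply imports it from the cited textbook, so the only comparison is with that citation.

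Your route is the standard one behind such transfer results:
\begin{itemize}
\item condition $G_p$ on its edge count $M\sim\Bin(N,p)$, under which it is distributed as $G_k$;
\item use the coupling $G_k\subseteq G_{k+1}$ to see that $k\mapsto\Pr(G_k\notin\mathcal P)$ is monotone;
\item discard the terms on the wrong side of $m$;
\item handle the second bullet by passing to the complementary monotone property.
\end{itemize}

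Your version is stronger than the statement as quoted because of the tail input you use. Since $Np=m$ is an integer, $m$ is a median of $M$. This gives the explicit bound $\Pr(G_m\notin\mathcal P)\le 2\Pr(G_p\notin\mathcal P)$ for increasing $\mathcal P$, and the same bound via $\Pr(M\ge m)\ge 1/2$ for decreasing $\mathcal P$. Both hold for every $0\le m\le N$. So the hypotheses $m\le(1-\delta)N$ and $N\to\infty$ are not needed at all.

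Those hypotheses only matter in your optional CLT/Poisson fallback. That fallback would also need a routine subsequence argument, to cover sequences $m(n)$ that are neither bounded nor tending to infinity. The citation buys brevity, while your argument is self-contained and quantitative.
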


\subsection{Expander mixing lemma}
For $(n,d,\lambda)$-graphs, one of the most useful tools is the expander mixing lemma. 
\begin{lemma}[{\cite{alon-chung}}]\label{lem:expander-mixing}
    Let $G$ be an $(n,d,\lambda)$-graph. Then, for any set $K\subseteq V(G)$ of size $k$, we have 
    \[
    |e_G(K) - dk^2/(2n)| \leq \lambda k/2\,.
    \]
    Moreover, for any two disjoint sets $K,L\subseteq V(G)$ of size $k,l$ respectively, we have
    \[
    |e_G(K,L) - dkl/n| \leq \lambda\sqrt{kl}\,.
    \]
\end{lemma}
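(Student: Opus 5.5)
The statement is the expander mixing lemma of Alon and Chung. I will prove it by spectral decomposition. Let $A$ be the adjacency matrix of $G$, and let $v_1 = \mathbf{1}/\sqrt n, v_2,\dots,v_n$ be an orthonormal eigenbasis with eigenvalues $d=\lambda_1,\lambda_2,\dots,\lambda_n$, where $|\lambda_i|\le\lambda$ for $i\ge 2$. For a set $K$ write $\mathbf 1_K$ for its indicator vector. The starting identities are
\[
\mathbf 1_K^T A \mathbf 1_L = e_G(K,L) \quad (K\cap L=\emptyset), \qquad \mathbf 1_K^T A\mathbf 1_K = 2e_G(K).
\]

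Expand $\mathbf 1_K=\sum_i \alpha_i v_i$ and $\mathbf 1_L=\sum_i\beta_i v_i$. Then $\alpha_1 = k/\sqrt n$ and $\beta_1=l/\sqrt n$, so
\[
\mathbf 1_K^TA\mathbf 1_L = \frac{dkl}{n} + \sum_{i\ge2}\lambda_i\alpha_i\beta_i .
\]
The error term satisfies
\[
\Bigl|\sum_{i\ge2}\lambda_i\alpha_i\beta_i\Bigr| \le \lambda\Bigl(\sum_{i\ge 2}\alpha_i^2\Bigr)^{1/2}\Bigl(\sum_{i\ge2}\beta_i^2\Bigr)^{1/2}
\]
by Cauchy--Schwarz. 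By Parseval, $\sum_{i\ge2}\alpha_i^2 = k - k^2/n \le k$, and similarly $\sum_{i\ge2}\beta_i^2 \le l$. This gives $|e_G(K,L)-dkl/n|\le\lambda\sqrt{kl}$, which is the second claim.

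For the first claim, take $L=K$ in the same computation. This gives $|2e_G(K) - dk^2/n| \le \lambda\sum_{i\ge 2}\alpha_i^2 \le \lambda k$. Dividing by $2$ yields $|e_G(K)-dk^2/(2n)|\le \lambda k/2$.

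The only point needing care is that the top eigenvector of a $d$-regular graph is exactly $\mathbf 1/\sqrt n$ with eigenvalue $d$, and that $\lambda$ bounds all the remaining eigenvalues in absolute value. Both follow from the definition of an $(n,d,\lambda)$-graph, so no step is a genuine obstacle. One could also use the sharper factor $\sqrt{(k-k^2/n)(l-l^2/n)}$ in place of $\sqrt{kl}$, but the stated bound is all that is needed here.
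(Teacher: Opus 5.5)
Your proof is correct. The paper does not prove this lemma at all; it simply cites Alon--Chung, and your argument (an orthonormal eigenbasis of $A$ containing $\mathbf 1/\sqrt n$, Parseval, then Cauchy--Schwarz on the $i\ge 2$ terms, with $\mathbf 1_K^TA\mathbf 1_K=2e_G(K)$ since $A$ has zero diagonal) is the standard spectral proof behind that citation.
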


\subsection{Large deviation}
For large deviations of binomial random variables, we will use the following simple bound, which can be deduced directly from the definition of the binomial distribution:
\begin{proposition}\label{prop:large-deviation}
    Let $p\in (0,1)$ and $X\sim \Bin(n,p)$. Then, for any $t\in \mathbb N$, we have 
    \[
        \P(X\geq t)\leq\binom{n}{t}p^t\,.
    \]
\end{proposition}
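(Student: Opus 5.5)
The bound follows from a union bound over all $t$-element subsets of the $n$ trials. I will view $X$ as the number of successes among $n$ independent trials $\xi_1,\dots,\xi_n$, each succeeding with probability $p$. If $t>n$, the left-hand side is $0$ and $\binom{n}{t}=0$, so the inequality holds trivially. From now on I assume $1\le t\le n$.

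The key step is the following inclusion of events. If $X\ge t$, then at least $t$ trials succeed, so there is some set $T\subseteq[n]$ with $|T|=t$ such that every $\xi_i$ with $i\in T$ succeeds; for instance, take any $t$ of the successful indices. This gives
\[
\{X\ge t\}\subseteq \bigcup_{T\in\binom{[n]}{t}} \{\xi_i=1 \text{ for all } i\in T\}.
\]

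Next, independence gives, for each fixed $T$ with $|T|=t$,
\[
\P\bigl(\xi_i=1 \text{ for all } i\in T\bigr)=p^t.
\]
There are $\binom{n}{t}$ choices of $T$, so the union bound yields $\P(X\ge t)\le\binom{n}{t}p^t$.

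No step is a real obstacle. The only point needing care is that the union bound overcounts outcomes with more than $t$ successes. Overcounting only weakens an upper bound, so it does no harm here.
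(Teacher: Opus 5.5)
Your proof is correct. The paper gives no written proof; it only remarks that the bound follows directly from the definition of the binomial distribution, and your union bound over $t$-subsets of successful trials is the standard way to make that precise. (An equivalent purely algebraic route uses $\binom{n}{j}\le\binom{n}{j}\binom{j}{t}=\binom{n}{t}\binom{n-t}{j-t}$ for $j\ge t$ and sums the binomial probability mass function over $j\ge t$.)
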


\subsection{Estimates}\label{subsec:estimates}
We use the following estimates that hold for all $n,t,s,p,q,k\in \mathbb N$ and $p\in [0,1]$. All of them are either standard or can be deduced straightforwardly from the definition of the binomial coefficients. 
\begin{enumerate}[itemsep = 3pt]
    \item $\binom{n}{t}\leq (en/t)^t$;\label{(1)}
    \item $\binom{n-s}{t}\leq \exp(-st/n)\cdot \binom{n}{t}$;\label{(2)}
    \item $1-p\leq \exp(-p)\le 1-p+p^2/2$;\label{(3)}
    \item $\sum_{i=i_0}^k\binom{q}{i}\binom{p}{k-i}\le\binom{q}{i_0}\binom{p+q-i_0}{k-i_0}$ if $s\le k$;\label{(4)}
    \item $\binom{n-s}{t-i}\leq \binom{n}{t} \cdot (t/n)^i\cdot \exp(-(s-i)(t-i)/n)$ if $i\le s$;\label{(5)}
    \item let $f(i)=\binom{s}{i}\binom{n-s}{t-i}$, then $f(i)\le 2f(i+1)$ if $i\leq st/(8n)$.\label{(6)}
\end{enumerate}

\section{Hitting time properties}\label{subsec:hitting-time-properties}
In this section, we establish various properties of the hitting time graph $G_{\tau_2}$, which are crucial for proving~\Cref{thm:hitting-time}.
We divide the proof into two cases. We call an $(n,d,\lambda)$ graph $G$ \emph{sparse} if $d\le 10\log n$, and \emph{dense} if $d> 10\log n$.
We begin with the sparse case.
\subsection{Sparse case}
The goal of this section is to establish the following: 
\begin{lemma}\label{lem:properties-sparse}
    There exists a constant $C$ such that the following statement holds. 
    Let $G$ be an $(n,d,\lambda)$-graph with $d\le 10\log n$ and $d/\lambda \geq C$.
    Then the subgraph at the hitting time of minimum degree 2, denoted as $G_\tau$, satisfies the following properties whp. 
    Let $\SMA(G_\tau) \coloneqq \{v\in V(G): \deg_{G_\tau}(v)\leq d/10^6\}$ and $C_1=C^{1/10}$. Then,
    \begin{enumerate}[label=\rm{(P\arabic*)}, itemsep = 3pt]
    \item $|\SMA(G_{\tau})|\le n^{0.11}$; \label{P1}
    \item $\forall~u,v\in \SMA(G_{\tau})$, we have $d_{G_{\tau}}(u,v)>4$;\label{P2}
    \item $\forall~U,V\subseteq V(G)$ disjoint and with $|U|,|V|\ge n/(4C_1^3)$, we have $e_{G_{\tau}}(U,V)\ge 1$; \label{P3}
    \item $\forall~W\subseteq V(G)\setminus \SMA(G_{\tau})$ with $|W|\le n/(2C_1)$, we have $|N_{G_{\tau}}(W)|\ge C_1|W|$. \label{P4}
\end{enumerate}
\end{lemma}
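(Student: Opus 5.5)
The first step is to pin down the hitting time $\tau$ within a window on which a single-$p$ model can be analysed. Write $N=nd/2$, and let $m_-$ and $m_+$ be chosen so that the expected number of vertices of degree at most $1$ in $G_{m_-/N}$ tends to infinity, while in $G_{m_+/N}$ it tends to $0$. A first and second moment computation, using $d$-regularity and the near-independence of the degrees of vertices at distance at least $2$, should then give $m_-\le\tau\le m_+$ whp. In the sparse regime $d\le 10\log n$ both $p_\pm=m_\pm/N$ satisfy $1-p_\pm=(nd)^{-(1+o(1))/(d-1)}$ or $1-p_\pm=\Theta(1)$, and in either case $p_\pm\ge c_0>0$ for an absolute constant $c_0$ (for $d$ large, since $(nd)^{-1/(d-1)}\le e^{-1/10}$). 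Because each of (P1)--(P4) is monotone or almost monotone in the edge set, I will prove it at $p_-$ or $p_+$ and then transfer it to $G_\tau$ through the coupling $G_{m_-}\subseteq G_\tau\subseteq G_{m_+}$ together with \Cref{lem:asymptotic-equiv}. (P3) and (P4) are increasing, so they transfer from $G_{m_-}$. For (P1) and (P2) I note that $\SMA(G_\tau)\subseteq\SMA(G_{m_-})$ and that distances in $G_\tau$ are at least distances in $G_{m_+}$, so it is enough to show that $|\SMA(G_{m_-})|\le n^{0.11}$ and that no two vertices of $\SMA(G_{m_-})$ lie at $G$-distance at most $4$ in $G_{m_+}$. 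To be safe I will prove the stronger statement with $G$-distance.

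For (P1), a vertex lies in $\SMA$ with probability at most $\binom{d}{d/10^6}(1-p)^{d-d/10^6}\le(1-p)^{(1-o(1))d}e^{O(d\log(10^6)/10^6)}$. Since $(1-p_-)^{d}\approx 1/(nd)$ up to $n^{o(1)}$, or $(1-p)^d\le n^{-c}$ with $c$ close to $1$ in the $d=\Theta(\log n)$ case, the expected size of $\SMA$ is at most $n^{0.1}$, and Markov's inequality gives (P1). I will have to be careful to use the factor $(1-p)^{d}$ rather than $(1-p)^{d-1}$, since the threshold is exactly calibrated; if the $e^{O(d/10^5)}$ loss is too large when $d=10\log n$, I would instead choose the window so that $(1-p_-)^{d}\le n^{-0.95}$. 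For (P2), I take a pair $u,v$ at $G$-distance at most $4$; there are at most $nd^4$ such pairs. Both endpoints must be small, and after conditioning away the at most $O(1)$ edges on a connecting path, the probability is roughly $\Pr[u\in\SMA]^2\cdot n^{o(1)}\le n^{-1.9}$. The union bound then gives $n\cdot d^4\cdot n^{-1.9}=o(1)$.

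For (P3), by \Cref{lem:expander-mixing}, $e_G(U,V)\ge d|U||V|/n-\lambda\sqrt{|U||V|}\ge dn/(32C_1^6)$ when $C\gg C_1^6$, which holds because $C_1=C^{1/10}$. Each such edge is absent with probability at most $1-c_0$, and the union bound over at most $4^n$ pairs $(U,V)$ gives $4^n(1-c_0)^{dn/(32C_1^6)}=o(1)$ once $d$ is large compared with $C_1^6$. I expect to need $d\ge C$, which follows from $\lambda\ge 1$ roughly whenever $d\le n/2$.

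The main obstacle is (P4). For $W$ avoiding $\SMA$, every vertex of $W$ has $G_\tau$-degree at least $d/10^6$. Suppose $|N(W)|<C_1|W|$, and set $S=W\cup N(W)$, so that $|S|\le(C_1+1)|W|$. Every $G_\tau$-edge at $W$ then lies inside $S$, and so $e_G(S)\ge e_{G_\tau}(S)\ge d|W|/(2\cdot10^6)$. On the other hand, \Cref{lem:expander-mixing} gives $e_G(S)\le d|S|^2/(2n)+\lambda|S|/2$. For $|W|\le n/(2C_1)$ we have $|S|\lesssim n/2$, so the first term is at most $d(C_1+1)^2|W|^2/(2n)$. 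This is below $d|W|/(4\cdot10^6)$ only if $|W|\lesssim n/(10^6C_1^2)$, and the second term is harmless because $\lambda(C_1+1)\ll d/10^6$. So the deterministic argument covers small $W$. For $n/(10^6C_1^2)\le|W|\le n/(2C_1)$ I would use (P3)-type reasoning instead: the set $V\setminus S$ has size at least $n/4$, and $W$ has no edges to it, which contradicts (P3) provided $|W|\ge n/(4C_1^3)$. This holds because $n/(10^6C_1^2)\ge n/(4C_1^3)$ for $C$ large. The delicate points are choosing the constants consistently and making sure that the degree bound is used in $G_\tau$ and not in $G$.
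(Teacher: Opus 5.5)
Your proposal is correct and follows essentially the paper's proof. (P1) and (P2) come from a first-moment count of $\SMA$ at a whp lower bound $m_-\le\tau$; the paper likewise effectively proves (P2) for $G$-distance, so your upper bound $m_+$ is not needed. (P3) comes from expander mixing plus a union bound over pairs $(U,V)$, and (P4) is deterministic: the $G_\tau$-degrees with expander mixing on $W\cup N_{G_\tau}(W)$ handle small $W$, and (P3) handles large $W$.

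Two small corrections. First, (P4) is not an increasing property, since adding edges shrinks $\SMA$ and so enlarges the family of sets $W$ to be checked; this is harmless, because your argument for (P4) runs directly in $G_\tau$. Second, for $d=o(\log n)$ you have $p_-\to 1$, which is outside the range $m\le(1-\delta)N$ of \Cref{lem:asymptotic-equiv}. So either compute in the uniform model $G_m$ as the paper does, or invoke the standard constant-factor comparison between $G_m$ and $G_{m/N}$ for monotone properties, which holds for every $m$.
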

In~\ref{P3}, strengthening from $n/(2C_1)$ (which is what $C_1$-expander requires) to $n/(4C_1^3)$ is due to a technical reason. When applying~\Cref{lem:properties-sparse}, we only use it for $n/(4C_1)$. 
We begin by establishing a (crude) lower bound on the hitting time for Hamiltonicity:
\begin{proposition}\label{prop:bound-hitting-time-sparse}
Let $G$ be a $d$-regular graph on $n$ vertices with $d =O(\log n)$. Then whp, $\tau_2\geq (1-n^{-1/d})e(G)$.
\end{proposition}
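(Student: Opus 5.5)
We need to show that whp $\tau_2 \ge (1-n^{-1/d})e(G)$. Set $m=(1-n^{-1/d})e(G)$. It suffices to show that whp $G_m$ still has a vertex of degree at most $1$: since $\mathcal{D}_2$ is monotone, $\tau_2>m$ exactly when $G_m\notin\mathcal{D}_2$. We work in the binomial model $G_p$ with $p$ slightly larger than $m/e(G)$, say $1-q$ with $q=n^{-(1+\varepsilon)/d}$ for a small constant $\varepsilon$. Then $G_m$ is stochastically contained in $G_p$ up to a whp coupling: by a Chernoff bound $G_p$ has more than $m$ edges whp, because $e(G)(q_0-q)$ with $q_0=n^{-1/d}$ is large compared with $\sqrt{e(G)}$. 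Here $e(G)q_0 = (nd/2)n^{-1/d} \ge n^{1-o(1)}$ when $d=\omega(1)$, and even for constant $d$ it is $n^{1-1/d}$, which is fine for $d\ge 2$. Hence it is enough to show that $G_p$ has a vertex of degree at most $1$ whp. Lemma \ref{lem:asymptotic-equiv} requires $m\le(1-\delta)N$, which may fail here, so we use the direct monotone coupling instead.

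Let $X$ be the number of vertices $v$ with $\deg_{G_p}(v)\le 1$, or more simply with degree exactly $1$ or $0$. For each vertex, $\P(\deg(v)\le1)\ge dpq^{d-1}+q^d \ge q^{d}$. Hence $\E X\ge nq^d=n\cdot n^{-(1+\varepsilon)}$, which is too small. So we take instead $q=n^{-(1-\varepsilon)/d}$, which gives $\E X\ge n^{\varepsilon}\to\infty$. The coupling then has to go the other way: we need $G_m\subseteq G_p$ to fail, i.e. $G_p$ should have at most $m$ edges so that $G_p\subseteq G_m$ and $G_p$ having a low-degree vertex implies the same for $G_m$. This holds since $q=n^{-(1-\varepsilon)/d}>n^{-1/d}$, so $e(G_p)\approx(1-q)e(G)<m$ by concentration, as the gap $e(G)(q-q_0)\ge e(G)q_0(n^{\varepsilon/d}-1)$ dominates $\sqrt{e(G)}$. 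This needs care when $d$ is as large as $10\log n$, where $n^{\varepsilon/d}-1\approx\varepsilon/10$ is still a constant, so the argument is fine.

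Second moment: $\Var X\le \E X+\sum_{u\ne v}(\P(A_u\cap A_v)-\P(A_u)\P(A_v))$, where $A_v$ is the event that $\deg(v)\le1$. If $u,v$ are non-adjacent in $G$, the events $A_u,A_v$ depend on disjoint edge sets and are independent. If $u\sim v$, the pair shares one edge, and we bound $\P(A_u\cap A_v)\le 4\,\P(A_u)\P(A_v)/q$, roughly, by conditioning on the shared edge. There are only $nd/2$ adjacent pairs, so their contribution is $O(nd\,\P(A)^2/q) = O(d(\E X)^2/(nq))$. Since $q\ge n^{-1/d}\ge e^{-1/10}\cdot$ const when $d\ge\log n$, and more generally $nq\ge n^{1-1/d}$, this is $o((\E X)^2)$ for $d\le10\log n$. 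Chebyshev then gives $X>0$ whp.

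The main obstacle is the bookkeeping of the regime of $d$: the fluctuation of $e(G_p)$ has to be separated from $m$ while $\E X=n^{\Theta(\varepsilon)}$ still tends to infinity. For $d=O(\log n)$ both hold with a fixed small $\varepsilon$, because $n^{\varepsilon/d}-1=\Omega(\varepsilon)$ and $n\,n^{-(1-\varepsilon)}=n^{\varepsilon}$.
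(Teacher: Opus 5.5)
\textbf{There is a genuine gap: your coupling runs in the wrong direction.} The event ``some vertex has degree at most $1$'' is \emph{decreasing}. It passes from a graph to its subgraphs, not to its supergraphs. So $G_p\subseteq G_m$ together with ``$G_p$ has a vertex of degree $\le 1$'' tells you nothing about $G_m$, because the extra $m-e(G_p)$ edges can raise every low degree. Your first choice $q=n^{-(1+\varepsilon)/d}$ had the correct orientation ($G_m\subseteq G_p$ whp). You dropped it only because the isolated-vertex bound $\P(A_v)\ge q^d$ gave $\E X\ge n^{-\varepsilon}$. With $q=n^{-(1-\varepsilon)/d}>n^{-1/d}$, your argument proves only $\tau_2>(1-n^{-(1-\varepsilon)/d})e(G)$, which is strictly weaker than the statement.

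\textbf{Missing idea.} At density exactly $1-n^{-1/d}$, with $q_0=n^{-1/d}$, isolated vertices cannot suffice, since $nq_0^d=1$. The divergence of $\E X$ comes entirely from the degree-one term $ndpq_0^{d-1}=dp\,n^{1/d}$. When $d=\Theta(\log n)$ this is only $\Theta(\log n)$. So lowering $q$ by a factor $n^{-\varepsilon/d}$ with $\varepsilon$ fixed already multiplies it by about $n^{-\varepsilon}$ and sends it to $0$: there is no polynomial slack available on the correct side.

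\textbf{How the paper avoids this.} It uses no coupling. It works directly in the uniform model at $t=(1-n^{-1/d})e(G)$, counts vertices of degree exactly $1$, and gets $\E X\ge(1+o(1))(1-\eps)nd\eps^{d-1}=\omega(1)$.

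\textbf{How to repair your route.} Take $q$ slightly \emph{below} $q_0$, for example $q=(1-1/d)q_0$, and use the degree-one term for the first moment:
\[
\E X\ \ge\ ndpq^{d-1}\ \ge\ e^{-1}dp\,n^{1/d}\ \to\ \infty .
\]
The edge gap $e(G)(q_0-q)=(n/2)n^{-1/d}$ exceeds the standard deviation $O(\sqrt{nd\,q_0})$ of $e(G_p)$ by a factor of order $\sqrt{n^{1-1/d}/d}\to\infty$ for $d\ge 2$. Hence $G_m\subseteq G_p$ whp, which is the orientation you need. Your adjacent-pair estimate $\P(A_u\cap A_v)\le 2\P(A_u)\P(A_v)/q$ and the Chebyshev step then go through unchanged, since $nq\ge n^{1-1/d}/2$.
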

\begin{proof}
    Let $t = (1-\eps) e(G)$ where $\eps = n^{-1/d}$.
    We claim that whp, $G_t$ has a vertex of degree $1$. This implies that whp $\tau_2\geq t$, as desired.
    Let $X_v$ be the indicator random variable for $\deg_{G_t}(v) = 1$ and $X = \sum_{v\in V(G)}X_v$. Observe that 
    \begin{align*}
        \E[X] = n\cdot \frac{d\binom{nd/2-d}{t-1}}{\binom{nd/2}{t}} & = nd \cdot \frac{(nd/2-d)!}{(nd/2-d-t+1)!(t-1)!}\frac{(nd/2-t)!t!}{(nd/2)!} \\
        & = nd\cdot \frac{t\cdot (nd/2 -d-t+2)\cdots (nd/2-t)}{(nd/2-d+1)\cdots (nd/2)}\geq (1+o(1))(1-\eps)nd \eps^{d-1} = \omega(1)\,,
    \end{align*}
    Indeed, if $d = O(1)$, then $\E[X] \geq n^{1/d} = \omega(1)$. If $d = \omega(1)$, then $\E[X]\geq d = \omega(1)$.
    Moreover, note that $\Var[X] = \sum_{(u,v)\in V(G)^2: \{u,v\}\in E(G)} (\E[X_uX_v] - \E[X_u]\E[X_v])$ as $X_u$ and $X_v$ are independent when $\{u,v\}\notin E(G)$. For each $(u,v)$ pair with $\{u,v\}\in E(G)$, we have
$$
\begin{aligned}
\E[X_uX_v] &= \frac{\binom{nd/2-2d+1}{t-1} + (d-1)^2\binom{nd/2-2d+1}{t-2}}{\binom{nd/2}{t}}\\
&= (1+o(1)) (\eps^{2d-2} + d^2 \eps^{2d-1}).
\end{aligned}
$$
Thus, the variance can be bounded as
\[
    \Var[X] \leq \sum_{\substack{(u,v)\in V(G)^2:\\ \{u,v\}\in E(G)}} (1+o(1))(\eps^{2d-2} + d^2 \eps^{2d-1})= nd\eps^{2d-2} + nd^3 \eps^{2d-1} = o(n^2d^2\eps^{2d-2})=o(\E[X]^2)\,.
\]
Therefore, by Chebyshev's inequality, we conclude whp $G_t$ has a vertex of degree $1$.
\end{proof}

We now prove~\Cref{lem:properties-sparse}.
\begin{proof}[Proof of~\Cref{lem:properties-sparse}]
Let $m = e(G_\tau)$. By~\Cref{prop:bound-hitting-time-sparse}, we may assume that $m= (1-\eps)nd/2$ and $0\leq \eps < n^{-1/d}$.
We take a sufficiently large $C$ so that $C > 10^{100}$ holds.
For~\ref{P1}, we first note that as $d \leq 10 \log n$, we have 
$$\binom{d}{d/10^6}\leq (10^6e)^{d/10^6} = e^{ \log(10^6e)/10^6\cdot d}\leq n^{10\log(10^6e)/10^5} \leq n^{2/10^3}\,.$$
Thus, the expectation of $|\SMA(G_\tau)|$ can be bounded as
\begin{align*}
    \E[\SMA(G_{\tau})]  &= n\cdot \sum_{i=0}^{d/10^6} \binom{d}{i}\frac{\binom{nd/2 - d}{m-i}}{\binom{nd/2}{m}}\\
    &\leq n\cdot \frac{d}{10^6}\cdot \binom{d}{d/10^6}\frac{\binom{nd/2-d}{m-d/10^6}}{\binom{nd/2}{m}} \\
    & \leq n\cdot \frac{d}{10^6} \cdot n^{2/10^3} \cdot \frac{(nd/2-d)!}{(nd/2-d-m+d/10^6)!(m-d/10^6)!}\cdot \frac{(nd/2-m)!m!}{(nd/2)!} \\
    &\leq (1+o(1))n\cdot \frac{d}{10^6} \cdot n^{2/10^3} \cdot (1-\eps)^{ d/10^6}\cdot \eps^{d-d/10^6} \leq n^{3/10^3}
\end{align*}
By Markov's inequality, $\Pr(|\SMA(G_\tau)|\geq n^{0.11})\leq n^{3/10^3 - 0.11} = o(1)$, which proves~\ref{P1}.

For~\ref{P2}, note that the probability there exists a vertex $u\in \SMA(G_{\tau})$ for which there is another vertex $v\in \SMA(G_{\tau})$ and $d_{G_\tau}(u,v)\leq 4$ is at most 
\begin{multline*}
    n\sum_{i=1}^4 d^i \binom{d}{d/10^6}^2\cdot \frac{\binom{nd/2 - 2d+1}{m-2\cdot d/10^6}}{\binom{nd/2}{m}}\leq  
    (4+o(1))nd^4 \binom{d}{d/10^6}^2 \cdot \frac{m^{2d/10^6}(nd/2-m)^{2d-2d/10^6}}{(nd/2)^{2d-1}}\\
    < 10d^3\binom{d}{d/10^6}^2 (1-\eps)^{2d/10^6}n^{-1 + 2/10^6} = o(1)\,.
\end{multline*}
by using $\binom{d}{d/10^6}^2\leq n^{4/10^3}\,$.

For~\ref{P3}, it suffices to prove for $|U| =|V| = n/(4C_1^3)$.
By~\Cref{lem:expander-mixing}, we know that
\[
e_{G}(U,V) \geq \frac{d}{n}|U||V| - \lambda \sqrt{|U||V|} \geq 0.99\cdot  \frac{d}{n}\left(\frac{n}{4C_1^3}\right)^2\geq \frac{nd}{20C_1^6}\,,
\]
where we used the fact that $C_1=C^{1/10}$, $C>10^{10}$ and $\lambda/d \leq 1/C$.
If $nd/(20C_1^6) > \varepsilon \cdot nd/2$, then \ref{P3} holds trivially since $G_\tau$ has $m=(1-\varepsilon)nd/2$ edges. 
Otherwise, the probability $e_{G_\tau}(U,V) = 0$ is at most 
\[
    \frac{\binom{nd/2 - nd/(20C_1^6)}{m}}{\binom{nd/2}{m}} = \frac{(nd/2-m)\cdots(nd/2-m-nd/(20C_1^6)+1)}{(nd/2)\cdots(nd/2-nd/(20C_1^6)+1)} \leq (1+o(1))\eps^{nd/(20C_1^6)} \leq n^{-n/(20C_1^6)}\,.
\]
Applying union bound over at most $2^{2n}$ choices of $(U,V)$, we conclude whp there are no $U,V$ satisfying~\ref{P3}, as stated.

For~\ref{P4}, suppose for contradiction there exists a set $W\subseteq V(G)\setminus \SMA(G_\tau)$ such that $|N_{G_\tau}(W)|< C_1|W|$. We divide the proof into two cases based on $|W|$.
Let $W^+ \coloneqq W\cup N_{G_\tau}(W)$. 

If $|W|\leq n/(4C_1^3)$, then by~\Cref{lem:expander-mixing} and $|W^+|\leq (C_1+1)|W|$, we conclude that 
\[
    e_G(W^+)\leq \frac{d}{2n}|W^+|^2 + \frac{\lambda}{2}|W^+| = d|W^+|\cdot \left(\frac{|W^+|}{2n} + \frac{\lambda}{2d}\right) \leq d|W|/C_1\,.
\]
Indeed, note that $|W^+|/2n \leq (C_1+1)/(4C_1^3)\leq 1/(3C_1^2)$ and $\lambda/(2d) \leq 1/(2C) \leq 1/(3C_1^2)$ since $C_1=C^{1/10}$ and $C\ge 10^{100}$.
However, because all edges incident to $v\in W$ in $G_\tau$ belong to $e_{G_\tau}(W\cup N_{G_\tau}(W))$, we conclude
\[
e_{G_\tau}(W\cup N_{G_\tau}(W))\geq \frac{1}{2}\sum_{v\in W}\deg_{G_\tau}(v) \geq \frac{d|W|}{2\cdot 10^6}\,.
\]
Since $G_\tau$ is a subgraph of $G$, it contradicts the assumption that $C_1=C^{1/10}>  10^{10}$.

If $n/(4C_1^3)\leq |W|\leq n/(2C_1)$, then by~\ref{P3}, we know that $|N_{G_\tau}(W)|\geq n-|W| - n/(4C^3)$. Indeed, suppose not, then there exists a set $U$ of size at least $n/(4C^3)$ disjoint from $W$ and $e_{G_\tau}(U,W) = 0$, which contradicts~\ref{P3}. Thus, 
\[
|N_{G_\tau}(W)|\geq n- n/(2C_1) - n/(4C_1^3) \geq n/2 \geq C_1|W|\,,
\]
as desired.
\end{proof}

\subsection{Dense case}
For the sparse case, the computations were done with respect to a uniformly random chosen $m$-edge subgraph of $G$. 
For the dense case, it is more convenient to do the computations with respect to a $p$-subgraph, where each edge of $G$ is included with probability $p$ independently. We will utilize the following result:
\begin{proposition}\label{prop:bound-hitting-time-dense}
     Let $G$ be a $d$-regular $n$-vertex graph, then the following holds:
    \begin{enumerate}[label=\rm{(A\arabic*)}, itemsep = 3pt]
            \item If $p\ge 0$ is such that $n((1-p)^d+dp(1-p)^{d-1})\rightarrow 0$ as $n\rightarrow\infty$, then $G_{p}$ contains no vertex with degree at most $1$ whp. \label{A1}
            \item If $p\ge 0$ is such that $n((1-p)^d+dp(1-p)^{d-1})\rightarrow \infty$ as $n\rightarrow\infty$, then $G_{p}$ contains a vertex with degree at most $1$ whp. \label{A2}
    \end{enumerate}
\end{proposition}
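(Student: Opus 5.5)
Let $X=\sum_{v\in V(G)}X_v$, where $X_v$ is the indicator that $\deg_{G_p}(v)\le 1$. Since $\deg_{G_p}(v)\sim\Bin(d,p)$ for every $v$, we have
\[
\E[X]=n\bigl((1-p)^d+dp(1-p)^{d-1}\bigr)=:\mu .
\]
For \ref{A1}, Markov's inequality gives $\Pr(X\ge 1)\le \mu\to 0$, and we are done.

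For \ref{A2}, we use the second moment method: it suffices to show $\Var[X]=o(\mu^2)$ and then apply Chebyshev. If $\{u,v\}\notin E(G)$, the degrees of $u$ and $v$ in $G_p$ depend on disjoint edge sets, so $X_u,X_v$ are independent and the covariance vanishes. So only the $nd$ ordered adjacent pairs $(u,v)$ contribute, together with the diagonal terms, which contribute at most $\mu$. For an adjacent pair, condition on the status of the shared edge $uv$. Set $q_0=(1-p)^{d-1}$ and $q_1=(d-1)p(1-p)^{d-2}$.
\begin{itemize}
\item If $uv$ is absent (probability $1-p$), each endpoint needs at most one edge among its other $d-1$ edges. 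This happens with probability $(q_0+q_1)^2$, by independence.
\item If $uv$ is present (probability $p$), each endpoint needs zero other edges, giving $q_0^2$.
\end{itemize}
Thus
\[
\E[X_uX_v]\le (1-p)(q_0+q_1)^2+p\,q_0^2 .
\]
Summing, the covariance contribution is at most
\[
\mu+nd\bigl[(1-p)(q_0+q_1)^2+pq_0^2\bigr].
\]

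The main step is to compare this with $\mu^2$. Write $\pi=\mu/n=(1-p)^d+dp(1-p)^{d-1}$. We have $q_0+q_1=(1-p)^{d-2}(1-p+(d-1)p)\le \pi/(1-p)^2$ up to constants when $p$ is bounded away from $1$. More carefully, in general $(q_0+q_1)\le 2\pi/(1-p)$, and $\pi\ge (1-p)^{d}$. Then:
\begin{itemize}
\item The term $nd(1-p)(q_0+q_1)^2$ is $O(nd\,\pi^2/(1-p))$. Compared with $\mu^2=n^2\pi^2$, this is $O(d/(n(1-p)))$.
\item The term $ndpq_0^2$ is at most $nd\,\pi^2/(1-p)^2$, comparable to the first.
\end{itemize}
So the ratio is $o(1)$ whenever $d=o(n(1-p))$, i.e., unless $1-p$ is tiny. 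For the degenerate regime where $1-p\le d/n^{1/2}$, say, I would instead argue directly. There $\pi\ge (1-p)^{d}$ is very small, yet $\mu\to\infty$ forces $\pi\gg 1/n$. I expect the delicate point to be bounding $(q_0+q_1)^2(1-p)\le C\pi^2/(1-p)$ uniformly, so I would split according to whether $dp\ge 1$ (then $q_1$ dominates, and $q_1\approx\pi/(1-p)$) or $dp<1$ (then $q_0\approx\pi/(1-p)$). In both cases the adjacent-pair sum is $O(nd\pi^2/(1-p))$.

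Finally, we need $d/(n(1-p))\to 0$. If this fails, then $1-p\le O(d/n)$ and $\pi\le (1+dp)(1-p)^{d-1}\le d\,(O(d/n))^{d-1}$. Then $\mu\le nd(O(d/n))^{d-1}$, which tends to $0$ for $d\ge 3$ (and for $d\le 2$ minimum degree $2$ is trivial, so the statement is degenerate). This contradicts $\mu\to\infty$. Hence $\Var[X]=o(\mu^2)$, and Chebyshev gives $X\ge1$ whp, proving \ref{A2}.
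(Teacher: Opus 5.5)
\textbf{There is a genuine gap in \ref{A2}.} For adjacent pairs you bound the covariance by $\E[X_uX_v]$ alone, discarding the subtracted term $\E[X_u]\E[X_v]$. Since $\E[X_uX_v]\le \pi^2/(1-p)$ and this is tight up to constants, your bound only gives $\Var[X]/\mu^2\lesssim 1/\mu+d/(n(1-p))$. The second term does not tend to $0$ when $d=\Theta(n)$. That regime is covered by the statement, and the paper's dense case ($d>10\log n$) uses it.

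Concretely, take $G=K_n$ ($d=n-1$) and $p=\log n/n$. Then $\mu\approx 1+\log n\to\infty$, but $d/(n(1-p))\to 1$. In $K_n$ every pair is adjacent, so your estimate is literally $\Var[X]\le \E[X^2]$, which gives nothing.

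Your closing contradiction argument fails at exactly this point. From $d/(n(1-p))\ge\delta$ you only get $1-p\le d/(\delta n)$. This is vacuous once $d\ge \delta n$, so $(O(d/n))^{d-1}$ need not be small and $\mu$ need not tend to $0$. For $d=o(n)$ your argument does go through. The step you flagged as delicate, bounding $(1-p)(q_0+q_1)^2+pq_0^2$ by $\pi^2/(1-p)$, is actually harmless.

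\textbf{The missing idea is the cancellation in the covariance.} Expanding your (exact) conditioning formula gives
\[
\E[X_uX_v]=(1-p)^{2d-1}+(2d-1)p(1-p)^{2d-2}+(d-1)^2p^2(1-p)^{2d-3}.
\]
Subtracting $\pi^2=(1-p)^{2d-2}(1-p+dp)^2$ leaves
\[
\E[X_uX_v]-\E[X_u]\E[X_v]=(d-1)^2p^3(1-p)^{2d-3}.
\]
So the adjacent-pair contribution relative to $\mu^2$ is
\[
\frac{d(d-1)^2p^3}{n(1-p)(1-p+dp)^2}.
\]

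If $dp\le 1-p$, this is at most $1/n$. Otherwise it is at most $dp/(n(1-p))$. In that case use $\mu\le 2ndp(1-p)^{d-1}$ together with $d^2p^2(1-p)^{d-2}\le 4$ for $d\ge 2$ (maximise at $p=2/d$). These give $dp/(n(1-p))\le 8/\mu\to 0$.

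Adding the diagonal terms, which are at most $\mu=o(\mu^2)$, Chebyshev finishes the proof. This is the paper's route. Your treatment of \ref{A1} by Markov matches the paper.
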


\begin{proof}
Let $X_v$ be the indicator random variable for $\deg_{G_p}(v) \le 1$ and $X = \sum_{v\in V(G)}X_v$.  We have
        $\E[X]=  n((1-p)^d+dp(1-p)^{d-1})$.
        Hence,~\ref{A1} holds by Markov's inequality.
        
For~\ref{A2}, note that 
       $$
       \begin{aligned}
        \mathrm{Var}[X] &= 2\sum_{(u,v)\in \binom{V(G)}{2}}(\mathbb{E}[X_u X_v]-\mathbb{E}[X_u]\mathbb{E}[X_v]) = 2\sum_{\{u,v\}\in E(G)}(\mathbb{E}[X_u X_v]-\mathbb{E}[X_u]\mathbb{E}[X_v])\\
        &= nd( (1-p)^{2d-1} + (2d-1)p(1-p)^{2d-2}
        +(d-1)^2p^2(1-p)^{2d-3})\\
        &\quad\quad-nd((1-p)^d + dp(1-p)^{d-1})^2\\
        &=nd(d-1)^2 p^3 (1-p)^{2d-3}\,.
       \end{aligned}
       $$
        Our goal now is to show that $\Var[X] = o(\E[X]^2)$ and then apply Chebyshev's inequality to conclude. 
        There are two cases: if the first term in $\E[X]$ dominates, which means $dp\leq 1-p$, then it suffices to show $nd(d-1)^2 p^3 (1-p)^{2d-3} = o(n^2 (1-p)^{2d})$, which simplifies to $n (1-p)^3/(dp)^3 = \omega(1)$. This is clearly true given that $dp \leq 1-p$. Otherwise, we have $dp\geq 1-p$ and we need to show that $nd(d-1)^2 p^3 (1-p)^{2d-3} = o(n^2 d^2p^2 (1-p)^{2d-2})$, which simplifies to $n(1-p)/(dp) = \omega(1)$. By the assumption of~\ref{A2} and $dp\geq 1-p$, we know that $ndp(1-p)^{d-1} = \omega(1)$. 
        Observe that $d^2 p^2 (1-p)^{d-2} = O(1)$ for all choices of $d = d(n)$ and $p = p(n)$ for which $d \geq 1$ and $p\in (0,1)$. This is immediate when $d = O(1)$, so we assume $d = \omega(1)$. By taking the derivative, we know the maximizer $p^* = 2/d$ and thus the full expression is at most $4/e^2$, as stated. 
        Putting everything together, we have $n = \omega(1/(dp(1-p)^{d-1})) = \omega(dp/(1-p))$, as desired.
\end{proof}
Next, we establish a bound on the hitting time for minimum degree $2$.
\begin{proposition}\label{prop:bound-hitting-time-new}
     Let $G$ be a $d$-regular $n$-vertex graph.  If $d\geq 10\log n$, then whp, we have $p_1\cdot e(G)\leq \tau_2(G)\leq p_2\cdot e(G)$ where $p_1 \coloneqq 1-e^{-\log n/d}$ and $p_2 \coloneqq (1+10^{-10^{10}})(1-e^{-\log n/d})$.
\end{proposition}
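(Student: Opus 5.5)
The plan is to translate the statement about $\tau_2$ into statements about $G_m$ for fixed $m$, then into statements about $G_p$, and finally read these off from \Cref{prop:bound-hitting-time-dense}. The key observation is that $\tau_2 \le m$ if and only if $G_m \in \mathcal{D}_2$, and $\mathcal{D}_2$ is monotone. It therefore suffices to show two things. First, $G_{m_1}$ whp has a vertex of degree at most $1$, where $m_1 = \lfloor p_1 e(G)\rfloor - 1$; this gives $\tau_2 > m_1$, i.e.\ $\tau_2 \ge p_1 e(G)$ up to rounding. Second, $G_{m_2}$ whp has minimum degree at least $2$, where $m_2 = \lceil p_2 e(G)\rceil$. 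Both are transferred from $G_p$ by \Cref{lem:asymptotic-equiv}. This requires $m \le (1-\delta)e(G)$ for a constant $\delta$. That holds because $d \ge 10\log n$ gives $\log n/d \le 1/10$, hence $p_2 \le (1+10^{-10^{10}})(1-e^{-1/10}) < 0.1$. Also $N = nd/2 \to \infty$.

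For the lower bound I take $p = p_1$. Then $(1-p)^d = e^{-\log n} = 1/n$, so $n(1-p)^d = 1$. The first term alone does not diverge, but the second term is
\[
ndp(1-p)^{d-1} = dp/(1-p) \ge d\cdot \tfrac{\log n}{d}\cdot(1+o(1)) \to \infty,
\]
using $p_1 \ge (\log n/d)(1 - \log n/(2d))$ from estimate~\cref{(3)}. So \ref{A2} applies, and $G_{p_1}$ whp has a vertex of degree at most $1$. One wrinkle is that \Cref{lem:asymptotic-equiv} relates $G_m$ to $G_{m/N}$ exactly. I will therefore apply it at $p' = m_1/N$, which differs from $p_1$ by $O(1/N)$. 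This perturbation changes $n(1-p)^d$ only by a factor $1+o(1)$, so the divergence in \ref{A2} persists.

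For the upper bound I take $p = p_2 = (1+\eta)p_1$ with $\eta = 10^{-10^{10}}$. I then need $n((1-p)^d + dp(1-p)^{d-1}) \to 0$. The key estimate is
\[
(1-p_2)^d \le e^{-p_2 d} = \exp\bigl(-(1+\eta)(1-e^{-\log n/d})d\bigr).
\]
Since $x = \log n/d \le 1/10$, we have $1 - e^{-x} \ge x(1-x/2) \ge 0.95x$. Hence $(1+\eta)(1-e^{-x})d$ is roughly $(1+\eta)\log n$ minus a correction of order $\log^2 n/d$. The main obstacle is exactly here: the correction $(1 - e^{-x})$ versus $x$ loses a constant factor $\approx 1 - x/2$, which for $d = 10\log n$ is $0.95$. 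That is far more than the $\eta$ gain.

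So instead I will compare directly with $(1-p_1)^d = 1/n$. We have
\[
\frac{1-p_2}{1-p_1} = 1 - \frac{\eta p_1}{1-p_1} \le \exp\bigl(-\eta p_1/(1-p_1)\bigr),
\]
which gives
\[
n(1-p_2)^d \le \exp\bigl(-\eta d p_1/(1-p_1)\bigr) \le \exp\bigl(-\eta \log n\,(1+o(1))\bigr) \to 0,
\]
using $dp_1 \ge 0.95\log n$. The second term is $n(1-p_2)^d \cdot dp_2/(1-p_2) \le n^{-\eta(0.95)}\cdot O(\log n) \to 0$, since $dp_2 = O(\log n)$. So \ref{A1} gives minimum degree $2$ whp, with the same $O(1/N)$ rounding perturbation absorbed as before. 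Combining the two bounds via \Cref{lem:asymptotic-equiv} yields $p_1 e(G) \le \tau_2 \le p_2 e(G)$ whp (modulo rounding by $1$, which is harmless).
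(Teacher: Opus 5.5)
Your proof is correct. The lower bound matches the paper's argument. At $p_1$ one has $n(1-p_1)^d=1$ exactly, and the second term $dp_1/(1-p_1)$ diverges, so \ref{A2} applies. In fact $dp_1/(1-p_1)=d(e^{\log n/d}-1)\ge\log n$ exactly, so no $1+o(1)$ is needed.

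The upper bound is where your route differs. The paper splits into two cases. For $d=c\log n$ it rewrites $1-p_2=e^{-1/c-\varepsilon}$ with $\varepsilon=\varepsilon(c)$ and bounds the \ref{A1} quantity by $n(1+d)(1-p_2)^d$. For $d=\omega(\log n)$ it Taylor-expands $p_2\approx(1+\eta)\log n/d$, with $\eta=10^{-10^{10}}$.

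Your ratio bound $(1-p_2)/(1-p_1)\le\exp\bigl(-\eta p_1/(1-p_1)\bigr)$, played against $n(1-p_1)^d=1$, is the same mechanism as the paper's first case, but with an explicit gain independent of $d$. You also write the second term as $n(1-p_2)^d\cdot dp_2/(1-p_2)=n(1-p_2)^d\cdot O(\log n)$, rather than bounding it by $n\,d\,(1-p_2)^d$; that cruder bound is harmless for $d=\Theta(\log n)$ but too costly for polynomially large $d$. Together these give $O(n^{-0.95\eta}\log n)$ for all $d\ge 10\log n$ at once.

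So your route removes the case distinction, along with the tacit subsequence argument it needs when $d/\log n$ oscillates. Your remark that $(1-p_2)^d\le e^{-p_2 d}$ alone fails near $d=10\log n$ explains exactly why comparing with $p_1$ is necessary. The paper's second case is just the more familiar computation in the regime $p_2\sim(1+\eta)\log n/d$.

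You also make explicit the transfer from $G_p$ to $G_m$ via \Cref{lem:asymptotic-equiv}, including the check $m\le(1-\delta)N$, which the paper leaves implicit. Finally, choosing $m_1=\lceil p_1N\rceil-1$ and $m_2=\lfloor p_2N\rfloor$ gives the inequalities exactly as stated. Your $O(1/N)$ perturbation argument applies verbatim to these choices, so the ``modulo rounding'' caveat can be dropped.
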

\begin{proof}
    To show the left side of the statement,  using $d>10\log n$ and the second estimate from~\Cref{subsec:estimates}, we have
\[
    n(1-p_1)^d+n\cdot d p_1(1-p_1)^{d-1} \geq n d\left(\frac{\log n}{d}-\frac{1}{2}\frac{\log^2 n}{d^2}\right) e^{-\log n}\ge \frac{1}{2}\log n\,.
\]
By~\Cref{prop:bound-hitting-time-dense}~\ref{A2}, we conclude whp, $G_{p_1}$ contains a vertex of degree at most $1$, and so $p_1\cdot e(G)\le \tau_2(G)$.

    For the right side, if $d=\Theta(\log n)$, let $d=c\log n$ with $c>10$. Then we can write $p_2=1-e^{-1/c-\varepsilon}$, where $\varepsilon$ is a constant depending on $c$. Note that $p_2\le 1/2$ when $c\ge 10$. Then we see that 
    \begin{align*}
        n(1-p_2)^d+dp_2(1-p_2)^{d-1}&\le n(1+d)(1-p_2)^{d}
        \\&= n(1+d)\cdot\exp(d/c-\varepsilon d)
        \\&=(1+c\log n)\exp(-\varepsilon c\log n)=o(1).
    \end{align*}

    If $d=\omega(\log n)$, let $\varepsilon=10^{-10^{10}}>0$, we see that 
    \begin{align*}
        n((1-p_2)^d+dp_2(1-p_2)^{d-1})&\le n(1+d\frac{(1+\varepsilon)\log n}{d})(1-\frac{(1+\varepsilon)\log n}{d}+\frac{(1+\varepsilon)\log^2 n}{2d^2})^{d-1}\\
        &\le n(1+2\log n)(\exp(-(1+\varepsilon-o(1)) \log n))\\
        &=(1+2\log n)n^{-(\varepsilon-o(1))}=o(1)       
    \end{align*}
    The first inequality holds as $$1-\frac{(1+\varepsilon)\log n}{d}\le 1-p_2 \le 1-\frac{(1+\varepsilon)\log n}{d}+\frac{(1+\varepsilon)\log^2 n}{2d^2}.$$
    The second inequality holds since $1-x\le \exp(-x)$. By~\Cref{prop:bound-hitting-time-dense}~\ref{A1}, $G_{p_2}$ contains no vertex with degree at most $1$ whp and so $\tau_2(G)\le p_2e(G)$.
\end{proof}
We note that if $d = \omega(\log n)$, then $1-e^{-\log n/d} =(1+o(1)) \log n/d$. However, this approximation does not work when $d = c\log n$ for some small constant $c>0$.
We provide a general bound that holds for all $d\geq 10\log n$ in the following proposition: 
\begin{proposition}\label{prop:approximation}
    Let $d\geq 10\log n$. Then $(0.95)\log n/d \leq (1-e^{-\log n/d}) \leq \log n/d$.
\end{proposition}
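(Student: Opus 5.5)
Set $x \coloneqq \log n/d$. The hypothesis $d\geq 10\log n$ gives $x\in(0,1/10]$. The statement then becomes the elementary two-sided bound
\[
0.95\,x \le 1-e^{-x} \le x \qquad \text{for } x\in(0,1/10].
\]
I will prove each inequality using estimate~\ref{(3)} from \Cref{subsec:estimates}, applied with $p=x\in[0,1]$.

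For the upper bound, the first half of~\ref{(3)} gives $1-x\le e^{-x}$. Rearranging yields $1-e^{-x}\le x$.

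For the lower bound, the second half of~\ref{(3)} gives $e^{-x}\le 1-x+x^2/2$. Hence
\[
1-e^{-x}\ge x - \frac{x^2}{2} = x\left(1-\frac{x}{2}\right).
\]
Since $x\le 1/10$, we have $1-x/2\ge 1-1/20=0.95$. This gives $1-e^{-x}\ge 0.95\,x$.

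Substituting back $x=\log n/d$ gives both inequalities in the statement. No step is a real obstacle. The only thing to check is that the constant $0.95$ equals $1-\tfrac12\cdot\tfrac1{10}$, and this is exactly what the assumption $d\ge 10\log n$ provides.
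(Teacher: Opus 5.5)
Your proposal is correct and follows the paper's proof: both apply $1-x\le e^{-x}\le 1-x+x^2/2$ with $x=\log n/d$, and both use $x\le 1/10$ to bound $x^2/2\le x/20$, which gives the constant $0.95$.
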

\begin{proof}
    As $1-x \leq e^{-x} \leq 1-x + x^2/2$ for all $x\geq 0$, we have $\log n/d - \frac{1}{2}(\log n/d)^2 \leq 1-e^{-\log n/d} \leq \log n/d$.
    Since $d\geq 10\log n$, we have $\frac{1}{2}(\log n/d)^2 \leq \frac{1}{20}\log n/d$. Thus, we conclude $(1-e^{-\log n/d}) \geq (1 - 1/20)\log n/d = 0.95\log n/d$, as desired.
\end{proof}

Now, we prove that $G_{p_2}$ is ``locally sparse''.
\begin{proposition}\label{prop:locally-sparse}
    Let $C \geq 10^{15}$ and $p \coloneqq (1+10^{-10^{10}})(1-e^{-\log n/d}) $.
    Also, let $G$ be an $(n,d,\lambda)$-graph with $d\geq 10\log n$ and $d/\lambda \geq C$ and $G_p$ be its $p$-random subgraph. Then whp, for any set $S\subseteq V(G)$ of size $s\leq n/(10C)$, $e_{G_p}(S)\leq s\log n/D$ for $D = \log C/10$. 
\end{proposition}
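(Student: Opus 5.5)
The plan is a first-moment union bound over sets $S$, with the deterministic edge count of $G$ inside $S$ supplied by the expander mixing lemma. By \Cref{prop:approximation} we have $p\le 2\log n/d$. For a fixed $S$ of size $s\le n/(10C)$, \Cref{lem:expander-mixing} gives
\[
e_G(S)\le \frac{ds^2}{2n}+\frac{\lambda s}{2}\le ds\Bigl(\frac{1}{20C}+\frac{1}{2C}\Bigr)\le \frac{ds}{C}.
\]
Hence $e_{G_p}(S)\sim\Bin(e_G(S),p)$ is stochastically dominated by $\Bin(ds/C,p)$, whose mean is at most $2s\log n/C$. This is far below the target $t\coloneqq s\log n/D$, because $D=\log C/10\ll C$.

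Next I apply \Cref{prop:large-deviation} with estimate~\ref{(1)}:
\[
\P\bigl(e_{G_p}(S)\ge t\bigr)\le \binom{ds/C}{t}p^t\le \Bigl(\frac{e\,ds\,p}{Ct}\Bigr)^t\le\Bigl(\frac{2eD}{C}\Bigr)^{s\log n/D}.
\]
Since $t$ need not be an integer, I round it up; this is harmless. The union bound over sets of size $s$ then costs $\binom{n}{s}\le (en/s)^s=\exp(s\log(en/s))$, so the contribution of size $s$ is at most
\[
\exp\Bigl(s\log(en/s)-\frac{s\log n}{D}\log\frac{C}{2eD}\Bigr).
\]
Because $D=\log C/10$, we have $\log(C/(2eD))\ge 0.9\log C$ for $C\ge10^{15}$. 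So the negative term is at least $s\log n\cdot 0.9\log C/(\log C/10)=9s\log n$. The positive term is at most $s(\log n+1)$. Each size therefore contributes at most $\exp(-7s\log n)=n^{-7s}$, and summing over $s\ge1$ gives $o(1)$.

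The only delicate point is the tradeoff in choosing $D$. The exponent gained per edge is $\log(C/D)$, and it must beat the entropy term $s\log n$ with room to spare. This is exactly why $D$ is taken to be logarithmic in $C$ rather than linear, so the main step to get right is confirming $\frac{\log(C/(2eD))}{D}>1$ with margin. Small $s$, where $t<1$, needs no special care: if $t$ rounds up to at least $1$ the bound still applies, and the entropy estimate holds uniformly. If instead a bound is needed at a level $t<1$, note that $s\log n/D\ge 1$ once $s\ge D/\log n$, and for smaller $s$ the same calculation with $t=1$ still gives probability $n^{-\Omega(s)}$ per set, as above.
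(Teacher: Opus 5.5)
Your proposal is correct and follows essentially the same route as the paper: bound $e_G(S)\le ds/C$ via \Cref{lem:expander-mixing}, dominate $e_{G_p}(S)$ by $\Bin(ds/C,p)$, and apply \Cref{prop:large-deviation} to get a per-set tail of $(O(D)/C)^{s\log n/D}\le n^{-\Omega(s)}$ that beats the $\binom{n}{s}\le(en/s)^s$ union bound; the paper has $1.1eD/C$ and $\exp(-2s\log n)$ where you have $2eD/C$ and $\exp(-9s\log n)$. Your concern about $t<1$ does not arise, since $s\ge 1$ already forces $t\ge \log n/D\to\infty$.
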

\begin{proof}
        By \Cref{lem:expander-mixing}, we have $e_G(S)\leq \frac{1}{2}(ds^2/n + \lambda s) \leq ds/C$.
        Thus, 
        \begin{align*}
            &\mathbb{P}(\exists S\subseteq V(G): s\leq n/(10C),\, e_{G_{p_2}}(S)\geq s\log n/D)\\
            &\quad \leq \sum_{s = 1}^{n/(10C)}\binom{n}{s} \mathbb{P}\left(\Bin(e_G(S), p) \geq s\log n/D\right)\\
            &\quad \leq \sum_{s = 1}^{n/(10C)}\binom{n}{s} \mathbb{P}(\Bin(ds/C, p)\geq s\log n/D)\,.
        \end{align*}
        Using~\Cref{prop:large-deviation}, $C\geq 10^{15}$ and $D= \log C/10$,
        \begin{multline*}
            \mathbb{P}(\Bin(ds/C, p)\geq s\log n/D) \leq\binom{ds/C}{s\log n/D}p^{s\log n/D}\leq(1.1eD/C)^{s\log n/D} \leq \exp(- 2s\log n)\,.
        \end{multline*}
        Plugging this back in, we conclude 
        \begin{align*}
            &\mathbb{P}(\exists S\subseteq V(G): s\leq n/(10C),\, e_{G_{p_2}}(S)\geq s\log n/D) \\
            &\leq \sum_{s=1}^{n/(10C)} (en/s)^s \exp(-2s\log n) = o(1)\,,
        \end{align*}
        as desired.
\end{proof}

In the remaining section, we prove the following result:
\begin{lemma}\label{lem:properties-dense}
    There exists a constant $C$ such that the following statement holds. 
    Let $G$ be an $(n,d,\lambda)$-graph with $d>10\log n$ and $d/\lambda \geq C$.
    Then the subgraph at the hitting time of minimum degree 2, denoted as $G_\tau$, satisfies the following properties whp. 
    Let $\SMA(G_\tau) \coloneqq \{v\in V(G): \deg_{G_\tau}(v)\leq \log n/10^5\}$ and $C_1=\log^{1/2} C/10^{10}$. Then, 
    \begin{enumerate}[label=\rm{(Q\arabic*)}, itemsep = 3pt]
    \item $\Delta(G_{\tau_2}) \leq 10\log n$; \label{Q1}
    \item $|\SMA(G_{\tau})|\le n^{0.11}$; \label{Q2}
    \item $\forall~u,v\in \SMA(G_{\tau})$, we have $d_{G_{\tau}}(u,v)>4$;\label{Q3}
    \item $\forall~U,V\subseteq V(G)$ disjoint and with $|U|,|V|\ge n/(4C_1^3)$, we have $e_{G_{\tau}}(U,V)\ge 1$; \label{Q4}
    \item $\forall~W\subseteq V(G)\setminus \SMA(G_{\tau})$ with $|W|\le n/(2C_1)$, we have $|N_{G_{\tau}}(W)|\ge C_1|W|$. \label{Q5}
\end{enumerate}
\end{lemma}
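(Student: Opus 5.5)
We will sandwich the hitting-time graph between two binomial random subgraphs. By \Cref{prop:bound-hitting-time-new}, whp $p_1 e(G)\le \tau\le p_2 e(G)$ with $p_1=1-e^{-\log n/d}$ and $p_2=(1+10^{-10^{10}})p_1$. Couple the process with $G_{p_1}\subseteq G_\tau\subseteq G_{p_2}$; this is legitimate up to $o(1)$ error, using \Cref{lem:asymptotic-equiv} with slightly perturbed $p$'s, or by coupling the uniform ordering with i.i.d.\ uniform labels on edges. Increasing properties, namely \ref{Q4} and the lower-degree part of \ref{Q5}, will be proved for $G_{p_1}$. Decreasing ones, namely \ref{Q1} and the local-sparsity input for \ref{Q5}, will be proved for $G_{p_2}$. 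The properties involving $\SMA$ must be handled with care, since $\SMA(G_\tau)\subseteq \SMA(G_{p_1})$.

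\textbf{\ref{Q1}.} Each degree in $G_{p_2}$ is $\Bin(d,p_2)$ with mean at most $1.01\log n$. By \Cref{prop:large-deviation},
\[
\P(\deg\ge 10\log n)\le (e\cdot 1.01/10)^{10\log n}=o(n^{-1}),
\]
so a union bound finishes.

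\textbf{\ref{Q2} and \ref{Q3}.} Work in $G_{p_1}$, where \Cref{prop:approximation} gives $dp_1\ge 0.95\log n$. The probability that a fixed vertex has degree at most $\log n/10^5$ is at most
\[
\binom{d}{\log n/10^5}(1-p_1)^{d-\log n/10^5}\le (e10^5 dp_1/\log n)^{\log n/10^5}\, e^{-\log n}\,(1-p_1)^{-\log n/10^5}.
\]
This is roughly $n^{-1+O(\log(10^6)/10^5)}\le n^{-0.99}$. Hence $\E|\SMA(G_{p_1})|\le n^{0.01}$, and Markov gives \ref{Q2}.

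For \ref{Q3}, sum over pairs $u,v$ joined by a path of length $i\le 4$ in $G$; there are at most $n d^i$ such pairs. For each pair we need both vertices to be small, but we must also account for the path edges being present in $G_{p_2}$, which contributes a factor $p_2^i$. So count paths in $G_{p_2}$ of length $i$, giving $n (dp_2)^i\le n(1.01\log n)^4$ configurations. Given the path, ask that both endpoints have small degree in $G_{p_1}$ among their remaining edges; these edge sets are nearly disjoint, so the probability is about $n^{-1.98}$. To decouple, one can sprinkle: the event "$u$ small in $G_{p_1}$ and $uv$-path in $G_{p_2}$" factors after removing at most $2$ edges per endpoint. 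The total is $n\,\mathrm{polylog}(n)\, n^{-1.98}=o(1)$.

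\textbf{\ref{Q4}.} By \Cref{lem:expander-mixing}, $e_G(U,V)\ge 0.99\,dn/(16C_1^6)$. The probability that none of these edges survives in $G_{p_1}$ is at most
\[
e^{-p_1 dn/(20C_1^6)}\le e^{-0.04\, n\log n/C_1^6},
\]
which beats the $4^n$ union bound.

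\textbf{\ref{Q5}.} This is the main obstacle. Split on $|W|$. For $|W|\ge n/(4C_1^3)$, argue exactly as in the sparse case using \ref{Q4}. For small $W$, let $W^+=W\cup N_{G_\tau}(W)$ and suppose $|W^+|\le (C_1+1)|W|\le n/(10C)$; if not, shrink $W$ or use \ref{Q4}. Every vertex of $W$ is outside $\SMA$, so
\[
e_{G_\tau}(W^+)\ge |W|\log n/(2\cdot 10^5).
\]
On the other hand, \Cref{prop:locally-sparse} applied in $G_{p_2}\supseteq G_\tau$ gives
\[
e_{G_\tau}(W^+)\le |W^+|\log n/D\le (C_1+1)|W|\cdot 10\log n/\log C.
\]
Since $C_1=\log^{1/2}C/10^{10}$, the right-hand side is $O(|W|\log n/(10^{9}\log^{1/2}C))$, which is smaller than the lower bound for large $C$. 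This contradiction proves \ref{Q5}.

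The delicate points are choosing $C_1$ so that this inequality closes, and making the $G_{p_1}/G_{p_2}$ coupling rigorous for \ref{Q3}.
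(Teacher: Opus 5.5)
Your arguments for (Q1)--(Q4) are sound. For (Q3) you use a different route from the paper: a first-moment count over short paths, using the label coupling $G_{p_1}\subseteq G_\tau\subseteq G_{p_2}$ and discarding at most two edges at each endpoint to get independence. This is valid, and somewhat cleaner than the paper's detour through the auxiliary set of vertices small in $G_{p_2}$.

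The gap is in (Q5).
\begin{itemize}
\item Your locally-sparse argument needs $|W^+|<(C_1+1)|W|\le n/(10C)$, so it only covers $|W|\le n/(10C(C_1+1))$.
\item Your (Q4)-based argument only covers $|W|\ge n/(4C_1^3)$.
\item Since $C_1=\log^{1/2}C/10^{10}$, $C$ is vastly larger than $C_1^3$. So the linear-size window $n/(10C(C_1+1))<|W|<n/(4C_1^3)$ is covered by neither.
\end{itemize}
The remark ``shrink $W$ or use (Q4)'' does not close this window, for three reasons.
\begin{itemize}
\item (Q4) as stated needs both sets to have size at least $n/(4C_1^3)$.
\item Non-expansion is not inherited by subsets. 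For $W_0\subseteq W$ you only know $N_{G_\tau}(W_0)\subseteq W^+$, and $|W^+|\approx C_1|W|$ can be far larger than $C_1|W_0|$.
\item \Cref{prop:locally-sparse} cannot be applied to $W^+$ itself, since it only holds for sets of size at most $n/(10C)$.
\end{itemize}

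You need an extra argument for this window; either of the following works.
\begin{itemize}
\item \emph{The paper's route.} Its Case 3, covering $2C_1n/\log n\le|W|\le n/(2C_1^2)$, is a union bound over $W$ and a set $W'\supseteq N_{G_\tau}(W)$ disjoint from $W$ with $|W'|=C_1|W|$. The expander mixing lemma gives $e_G(W,W\cup W')\le d|W|/C_1$, so $e_{G_\tau}(W,W\cup W')$ is dominated by $\Bin(d|W|/C_1,p_2)$. On the other hand, the degrees of the non-small vertices of $W$ force at least $|W|\log n/(2\cdot 10^5)$ such edges. The resulting tail $(3\cdot 10^5e/C_1)^{|W|\log n/(2\cdot 10^5)}$ beats the $2^{2n}$ choices because $|W|\log n\ge 2C_1n$.
\item \emph{An unbalanced (Q4), quicker in your framework.} Take disjoint $U,W$ with $|U|\ge n/4$ and $|W|\ge n/(10C(C_1+1))$. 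The expander mixing lemma gives $e_G(U,W)\ge dn/(80C(C_1+1))$, using $C\ge 160(C_1+1)$. Hence
\[
\P\bigl(e_{G_{p_1}}(U,W)=0\bigr)\le \exp\bigl(-n\log n/(80C(C_1+1))\bigr),
\]
which survives a $4^n$ union bound. Now suppose $|W|<n/(4C_1^3)$ and $|N_{G_\tau}(W)|<C_1|W|$. Then $U=V(G)\setminus W^+$ has size more than $n/2$ and $e_{G_\tau}(U,W)=0$. This contradicts the unbalanced property, since $G_{p_1}\subseteq G_\tau$, and so covers the whole window.
\end{itemize}
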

\begin{proof}
Let $C>(10e)^{10^{50}}$. Note that $C_1 = \log^{1/2}C/10^{10}\ge 10^{10}$. Let $N\coloneqq e(G)=nd/2$. Let $p_1=1-e^{-\log n/d}$, $p_2=(1+10^{-10^{10}})(1-e^{-\log n/d})$ and $m_1=Np_1$, $m_2=N p_2$. 
For $i\in \{1,2\}$, we write $G_{p_i}$ for the random $p_i$-subgraph of $G$, obtained by including each $e\in E(G)$ independently with probability $p_i$. Similarly, we write $G_{m_i}$ to be the uniformly random $m_i$-edged subgraph of $G$.

From \Cref{prop:bound-hitting-time-new}, we know whp $m_1\le \tau\le m_2$. Note that the natural coupling between the probability measure of $G_{m_1}$, $G_\tau$ and $G_{m_2}$ defined by adding edge one at a time that satisfies $G_{m_1}\subseteq G_{\tau}\subseteq G_{m_2}$ with probability $1$. Therefore, for any increasing property $\mathcal E^+$, we know $\P(G_{m_2}\in \mathcal E^+)\geq \P(G_\tau \in \mathcal E^+) \geq \P(G_{m_1}\in \mathcal E^+)$. 
In addition, by using \Cref{lem:asymptotic-equiv}, we can freely switch the role between $G_{m_i}$ and $G_{p_i}$ for $i\in \{1,2\}$. 
We will frequently make use of these relations to do the computations in  $G_{p_i}$ instead of $G_\tau$.

For~\ref{Q1}, it suffices to show $\Delta(G_{p_2})\leq 10\log n$ whp. For any given vertex $v\in V(G)$, by \Cref{prop:large-deviation}, we know $\mathbb{P}(\deg_{G_{p_2}}(v) \geq 10\log n) = o(n^{-2})$. Applying the union bound over all vertices proves~\ref{Q1}.

For~\ref{Q2}, it suffices to show that $|\text{SMALL}(G_{p_1})|\leq n^{0.11}$ whp. Let $d_0 = \log n/10^5$.
Observe that since $p_1\leq \log n/d$, the expectation of $|\text{SMALL}(G_{p_1})|$ can be bounded as
\begin{align*}
        \mathbb{E}[|\text{SMALL}(G_{p_1})|] &= n\cdot \mathbb{P}(\text{Bin}(d, p_1)\le d_0) \leq n\cdot d_0\cdot \binom{d}{d_0}p_1^{d_0}(1-p_1)^{d-d_0} \\ 
    &\leq  n\cdot d_0\cdot (edp_1/d_0)^{d_0}\cdot \exp\left(-\frac{\log n}{d}(d-d_0)\right)\\&\leq   d_0\cdot n^{10^{-5}\log(10^5 e) + d_0/d} = o(n^{0.09})\,,
\end{align*}
where for the second inequality we used the fact that $1-p_1 = e^{-\log n/d}$.
Thus, by Markov's inequality, we conclude whp $|\text{SMALL}(G_{p_1})|\leq n^{0.11}$. 

For~\ref{Q3}, it suffices to show that whp
in $G_{p_2}$, there is no path of length at most $4$ between any two vertices of $\SMA(G_{p_1})$. 
The proof consists of two steps. 
First, let $S$ be vertices of degree at most $\log n/10^4$ in $G_{p_2}$ (c.f. for $\SMA$ the threshold was $\log n/10^5$). We show that whp $\SMA(G_{p_1})\subseteq S$. Then, we show that whp there is no path of length $4$ between any two vertices in $S$, which implies (the strengthened)~\ref{Q3}.
\begin{claim}\label{cla:small-to-s}
Let $S\coloneqq \{v\in V(G): \deg_{G_{p_2}}(v)\leq \log n/10^4\}$. 
Then, $\SMA(G_{p_1})\subseteq S$ whp.
\end{claim}
\begin{proofclaim}
    Recall that vertices in $\SMA(G_{p_1})$ has degree at most $\log n/10^5$ in $G_{p_1}$. Going from $G_{p_1}$ to $G_{p_2}$, each (missing) edge is added independently with probability $p'\coloneqq (p_2-p_1)/(1-p_1) \leq 10^{-10^9}\log n/d$. Thus, by~\Cref{prop:large-deviation}, the probability there exists a vertex $u\in \SMA(G_{p_1})\setminus S$ is at most 
    \[
        \Pr(\Bin(d, p')\geq \log n/10^4 - \log n/10^5) = o(1/n)\,.
    \]
    By the union bound over all vertices, the proof is finished.
\end{proofclaim}
It remains to prove $\mathbb{P}(\exists u,v\in S: d_{G_{p_2}}(u,v)\leq 4)=o(1)$. We will do the remaining calculations entirely in $G_{p_2}$.
For each vertex $u\in V(G)$, let $N^{(4)}_{G_{p_2}}(u)$ be the set of vertices with distance at most $4$ from $u$ in $G_{p_2}$. 
Observe that
\begin{align*}
    &\mathbb{P}(\exists u,v\in S: d_{G_{p_2}}(u,v)\leq 4) \\
    &\quad \leq \sum_{u\in V(G)}\sum_{v\in N^{(4)}_{G_{p_2}}(u)} \mathbb{P}(u,v\in S)\\
    &\quad \leq \sum_{u\in V(G)}\sum_{v\in N^{(4)}_{G_{p_2}}(u)} \Pr(\Bin(d, p_2)\leq \log n/10^4 - 1)^2\,.
\end{align*}
By~\Cref{prop:approximation} and a direct computation, $\Pr(\Bin(d, p_2)\leq \log n/10^4 - 1) = o(n^{-0.9})$.
Using~\ref{Q1}, we have $\mathbb{P}(\exists u:|N^{(4)}_{G_{m_2}}(u)|\geq 10^5(\log n)^5) = o(1)$. Indeed, since $\Delta(G_{m_2})\leq 10\log n$, we know $ |N^{(4)}_{G_{m_2}}(u)|\leq \sum_{i=1}^4 (10\log n)^i\leq 10^5(\log n)^5$. Assuming this event, we get 
\[
    \mathbb{P}(\exists u,v\in S: d_{G_{p_2}}(u,v)\leq 4) \leq (10^5+o(1))n \log^5 n\cdot n^{-1.8}  = o(1)\,,
\]
as desired.

For~\ref{Q4}, it suffices to prove whp~\ref{Q4} holds with $G_{\tau}$ replaced by $G_{p_1}$ and $|U|=|V|=n/4C_1^3$. 
By~\Cref{lem:expander-mixing}, we know
$$e_{G}(U,V)\ge\frac{d}{n}|U||V|-\lambda\sqrt{|U||V|} \geq \frac{d}{n}\left(\frac{n}{4C_1^3}\right)^2 - \lambda\frac{n}{4C^3_1} \geq \frac{nd}{20C_1^6}\,.$$
By a direct computation and the union bound, 
\begin{align*}
     &\mathbb{P}(\exists\text{ disjoint } U,V\subseteq V(G) \text{ with } |U|,|V|= n/(4C_1^3),\,e_{G_{p_1}}(U,V)=0)\\
     &\quad \leq \binom{n}{|U|}\binom{n}{|V|} \cdot (1-p_1)^{nd/(20C_1^6)}\\
     &\quad \leq (4eC_1^2)^{2n/(4C_1^2)}\cdot e^{-n\log n/(20C_1^6)}= \exp(-\Theta(n\log n)) = o(1)\,,
\end{align*}
as desired.

We now prove~\ref{Q5}. It suffices to show that for $W\subseteq V(G)\setminus \text{SMALL}(G_{\tau})$ and $|W|\leq n/(2C_1)$, we have $|N_{G_{p_1}}(W)|\geq C_1 |W|$ whp. 
We divide the proof into four cases based on the cardinality of $W$:
\begin{description}
    \item[Case 1 ($|W|\leq\log n/(2 \cdot 10^5C_1)$)] Since $W\subseteq V(G)\setminus\text{SMALL}(G_{\tau})$, we know that $\deg_{G_{\tau}}(v) \geq \log n/10^5$ for all $v\in W$ and thus $|N_{G_{\tau}}(W)| \geq |N_{G_{\tau}}(v)| - |W| \geq \log n/(2 \cdot 10^5) \geq C_1|W|$, as desired.
    \item[Case 2 ($\log n/(2 \cdot 10^5C_1)\leq |W|\leq 2C_1n/\log n$)]
    Assume for contradiction that $|N_{G_{\tau}}(W)| < C_1|W|$. Let $W^+\coloneqq W\cup N_{G_{\tau}}(W)$. Then, we have $|W^+|< (C_1+1)|W|\leq 2C_1(C_1+1)n/\log n$. By~\Cref{prop:locally-sparse}, we have 
    $$
    e_{G_{\tau}}(W^+) \leq e_{G_{p_2}}(W\cup N_{G_{p_2}}(W))\leq  \frac{C_1+1}{10^6C_1} \log n \cdot |W|\,.
    $$
    However, because $W\subseteq V(G)\setminus \text{SMALL}(G_{\tau})$ and for every vertex $v\in W$, we have $N_{G_{\tau}}(v)\subseteq W^+$, we conclude 
    $$
    \begin{aligned}
    e_{G_{\tau}}(W^+)&\geq \frac{1}{2}\sum_{v\in X}\deg_{G_{p_1}}(x)
    \geq \frac{1}{2\cdot10^5}\log n \cdot |W|> \frac{C_1+1}{10^6C_1} \log n \cdot |W|\,,
    \end{aligned}
    $$
    which is a contradiction.
    
    \item[Case 3 ($2C_1n/\log n\leq |W|\leq n/(2C_1^2)$)] 
    Assume for contradiction that there exists a set $W\subseteq V(G)\setminus \text{SMALL}(G_{\tau})$ with $2C_1 n/\log n\leq |W|\leq n/(2C_1^2)$ such that $|N_{G_{\tau}}(W)|<C_1|W|$.
    Let $W'\subseteq V(G)$ be an arbitrary set of size $C_1|W|$ such that $W'\supseteq N_{G_{\tau}}(W)$ and $W'\cap W=\emptyset$.
    Let $W^+\coloneqq W\cup W'$ and so $|W^+|=(C_1+1)|W|$.
    By \Cref{lem:expander-mixing} and the range of $|W|$, we know that 
    $$
    e_G(W,W^+) \leq d|W||W^+|/n + \lambda \sqrt{|W||W^+|} \leq d|W|/C_1\,.
    $$
    Thus, $e_{G_{\tau}}(W,W^+)$ is stochastically dominated by $\Bin(d|W|/C_1, p_2)$. 
    Moreover, observe that because $W'\supseteq N(W)$ and $W\subseteq V(G)\setminus \text{SMALL}(G_{\tau})$, we have (deterministically) 
    $$
    \begin{aligned}
    e_{G_{\tau}}(W,W^+) \geq \frac{1}{2}\sum_{v\in W}\text{deg}_{G_{p_1}}(v)\geq |W|\log n/(2\cdot10^5)\,.
    \end{aligned}
    $$
    Thus, by \Cref{prop:large-deviation} and ~\Cref{prop:approximation}, $p_2\le (1+10^{-10^{10}})\log n/d$ and we conclude
    \begin{align*}
         &\mathbb{P}(\exists W\subseteq V(G)\setminus \text{SMALL}(G_{\tau}): 2C_1n/\log n\leq |W|\leq n/(2C_1^2),\,|N_{G_{\tau}}(W)|< C_1|W|) \\
         &\quad \leq \sum_{s = 2C_1n/\log n}^{n/2C_1^2} \binom{n}{s}\binom{n-s}{C_1s} \mathbb{P}\left(\Bin(ds/C_1,p_2)\geq \frac{s\log n}{2\cdot10^5}\right)\\
         &\quad \leq \sum_{s = 2C_1n/\log n}^{n/2C_1^2} 2^{2n}\cdot \binom{ds/C_1}{s\log n/(2\cdot10^5)} p_2^{s\log n/(2\cdot10^5)}\\
         &\quad \leq \sum_{s = 2C_1n/\log n}^{n/2C_1^2} 2^{2n}\cdot (3\cdot10^5e/C_1)^{s\log n/(2\cdot10^5)}\\
         &\quad \leq n\cdot 2^{2n}\cdot (3\cdot10^5e/C_1)^{C_1n/10^5} = o(1),
    \end{align*}
    The final equality holds as $C_1\ge 10^{10}$. Thus, we get the desired conclusion.
    
\item[Case 4 ($n/(2C_1^2)\leq |W|\leq n/(2C_1)$)] 

By~\ref{Q4}, we conclude that $|N_{G_{\tau}}(W)| \geq n/2\ge C_1|W|$. Indeed, if $|N_{G_{\tau}}(W)|<n/2$, then there exists a set $U\subseteq V(G)\setminus(W\cup N_{G_{\tau}}(W))$ of size at least $n/(2C_1^2)$ for which $e_{G_{\tau}}(U,W) = 0$. It contradicts~\ref{Q4}. \qedhere
\end{description}
\end{proof}

\subsection{Proof of \Cref{thm:hitting-time}}\label{subsec:main-proof}
We now show that the properties in \Cref{lem:properties-sparse} and \Cref{lem:properties-dense} are sufficient to apply \Cref{thm:robust-Hamiltonicity} and conclude the proof of \Cref{thm:hitting-time}.
\begin{proof}[Proof of \Cref{thm:hitting-time}]
    Let $C_1 = 10^{10}$ be the constant from \Cref{thm:expander-hamiltonicity} and $C =\exp(C_1 \cdot 10^{11})$.
    Assume that $n$ is sufficiently large compared to $C$.
    Let $G$ be an $(n,d,\lambda)$-graph with $\lambda \leq d/C$.
    We first claim that $G_{\tau_2}$ is close to a $C_1$-expander.
    \begin{claim}
        Whp, $G_{\tau_2}$ contains a set of vertices $X \subseteq V(G)$ with $|X| \leq n^{0.11}$ such that the graph $G_{\tau_2} - X$ is a $C_1$-expander and pairwise distances between vertices in $X$ are at least $5$. 
    \end{claim}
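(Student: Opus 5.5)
The plan is to take $X \coloneqq \SMA(G_{\tau_2})$ and check the required properties directly from \Cref{lem:properties-sparse} (if $d\le 10\log n$) or \Cref{lem:properties-dense} (if $d>10\log n$). Write $C_1'$ for the constant produced by the relevant lemma: $C^{1/10}$ in the sparse case and $\log^{1/2}C/10^{10}$ in the dense case. Since $C$ is huge compared to $C_1$, I will assume $C_1' \ge 2C_1+1$; if the stated choice of $C$ does not already give this, I would enlarge $C$ as a function of $C_1$. Both lemmas hold whp, so it is enough to work deterministically on the event that \ref{P1}--\ref{P4} (respectively \ref{Q2}--\ref{Q5}) hold.

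\textbf{Size and separation.} The bound $|X|\le n^{0.11}$ is \ref{P1} (respectively \ref{Q2}). Pairwise distance greater than $4$, that is at least $5$, is \ref{P2} (respectively \ref{Q3}). Let $H \coloneqq G_{\tau_2}-X$ and $n' \coloneqq n-|X| = (1-o(1))n$.

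\textbf{Vertex expansion of $H$.} Take $W\subseteq V(H)$ with $|W|\le n'/(2C_1)$. Since $C_1'\ge 2C_1$, we have $|W|\le n/(2C_1')$, and $W\cap X=\emptyset$, so \ref{P4}/\ref{Q5} gives $|N_{G_{\tau_2}}(W)|\ge C_1'|W|$. Moreover $N_H(W)=N_{G_{\tau_2}}(W)\setminus X$. The key observation is that $|N_{G_{\tau_2}}(W)\cap X|\le |W|$. Indeed, assign to each $x\in N_{G_{\tau_2}}(W)\cap X$ a neighbour $w_x\in W$. If two distinct $x,x'$ received the same $w$, then $d_{G_{\tau_2}}(x,x')\le 2$, contradicting the separation property. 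So $x\mapsto w_x$ is injective, and
\[
|N_H(W)|\ge (C_1'-1)|W|\ge C_1|W|.
\]

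\textbf{Edges between large sets.} Take disjoint $U,V\subseteq V(H)$ with $|U|,|V|\ge n'/(2C_1)$. Then $n'/(2C_1)\ge n/(4C_1)\ge n/(4C_1'^3)$, so \ref{P3}/\ref{Q4} gives an edge of $G_{\tau_2}$ between $U$ and $V$. Both endpoints lie outside $X$, so this edge is in $H$. Hence $H$ is a $C_1$-expander.

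\textbf{Main obstacle.} The only delicate point is that deleting $X$ could destroy neighbourhoods. The distance condition handles this through the injection argument above, which costs only one factor of $|W|$. Beyond that, the remaining work is bookkeeping of constants, namely ensuring $C_1'\ge 2C_1+1$ so that both the size thresholds and the expansion factor transfer from $G_{\tau_2}$ to $H$.
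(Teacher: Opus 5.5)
Your approach is the paper's: take $X=\SMA(G_{\tau_2})$, read off size and separation, lose at most $|W|$ neighbours via the injection $x\mapsto w_x$, and transfer joinedness through \ref{P3}/\ref{Q4}. Keeping the lemma's constant $C_1'$ separate from the target $C_1$ is more careful than the paper. Your caveat about enlarging $C$ is needed: the paper's $C=\exp(C_1\cdot 10^{11})$ gives only $C_1'=\log^{1/2}C/10^{10}=\sqrt{10}$ in the dense case.

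However, the vertex-expansion step has a reversed inequality. From $C_1'\ge 2C_1$ you get $n/(2C_1')\le n/(4C_1)<n'/(2C_1)$. So $|W|\le n'/(2C_1)$ does \emph{not} imply $|W|\le n/(2C_1')$, and \ref{P4}/\ref{Q5} say nothing about sets with $n/(2C_1')<|W|\le n'/(2C_1)$. As written, expansion of $H$ is unproved in that range. The paper's write-up hides the same issue by using the symbol $C_1$ for both constants.

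The fix is to handle that range with \ref{P3}/\ref{Q4} instead. Suppose $|W|>n/(2C_1')\ge n/(4C_1'^3)$, and let $U\coloneqq V(G)\setminus(W\cup N_{G_{\tau_2}}(W))$. Then $U$ is disjoint from $W$ and sends no edge of $G_{\tau_2}$ to $W$, so joinedness forces $|U|<n/(4C_1'^3)$. Using $C_1\ge 2$ and $C_1|W|\le n'/2\le n/2$, this gives
\[
|N_H(W)|\ge n-|W|-\frac{n}{4C_1'^3}-n^{0.11}\ge n-\frac{n}{2C_1}-\frac{n}{4C_1'^3}-n^{0.11}\ge \frac{n}{2}\ge C_1|W|.
\]
This is the same argument as the last case in the proofs of \ref{P4} and \ref{Q5}. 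With this case split — small $W$ via \ref{P4}/\ref{Q5} plus your injection, larger $W$ via joinedness — the proof is complete.
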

    \begin{proof}
        We separate the proof into two cases depending on the value of $d$.
        If $d \leq 10 \log n$, then by \Cref{lem:properties-sparse}, $G_{\tau_2}$ satisfies \ref{P1}-\ref{P4} whp.
        Let $X = \SMA(G_{\tau_2})$. 
        Then \ref{P1} implies that $|X| \leq n^{0.11}$ and \ref{P2} implies that pairwise distances between vertices in $X$ are at least $5$.
        In addition, \ref{P2} implies for any vertex subset $S \subseteq V(G)$, its neighborhood $N_{G_{\tau_2}}(S)$ contains at most $|S|$ vertices in $X$.
        Thus, $|N_{G_{\tau_2}-X}(S)| \geq |N_{G_{\tau_2}}(S)| - |S| \geq (C^{1/10}-1) \cdot |S| \geq C_1 |S|$ for any $S \subseteq V(G) \setminus X$ with $|S| \leq (n-|X|)/(2C_1) \leq n/(2C_1)$.
        Finally, \ref{P4} imply that for any two vertex disjoint sets $U, V$ of $G_{\tau_2} - X$ of size at least $(n-|X|)/(2C_1) \geq n/(4C_1^3)$, there exists an edge between $U$ and $V$ in $G_{\tau_2}$.
        Therefore, $G_{\tau_2} - X$ is a $C_1$-expander.  

        If $d \geq 10 \log n$, then by \Cref{lem:properties-dense}, $G_{\tau_2}$ satisfies \ref{Q2}-\ref{Q5} whp.
        Let $X = \SMA(G_{\tau_2})$. 
        Then similar to the previous case, \ref{Q2} implies that $|X| \leq n^{0.11}$, \ref{Q3} implies that pairwise distances between vertices in $X$ are at least $5$, and \ref{Q4} and \ref{Q5} imply that $G_{\tau_2} - X$ is a $C_1$-expander.
    \end{proof}
    As $\delta(G_{\tau_2}) \geq 2$, we have that any vertex in $X$ has degree at least $2$ in $G_{\tau_2}$.
    For each vertex $v \in X$, we choose any two neighbors $u, w$ of $v$ in $G_{\tau_2}$, add the edge $uw$ to the set $E_0$, and remove $v$ from the graph. Note that the distance between any two vertices in $X$ is at least $5$, so $u$ and $w$ are not in $X$.
    Let $G'$ be the resulting graph after removing all vertices in $X$ and adding edges as above.
    Then as $G'$ contains $G_{\tau_2}-X$ as a spanning subgraph, $G'$ is also a $C_1$-expander.
    Let $n'$ be the number of vertices in $G'$.
    The set $E_0$ contains at most $|X| \leq n^{0.11} \leq (n')^{0.12}$ edges and all edges of $E_0$ are at least distance $3$ apart in $G'$.
    Therefore, by \Cref{thm:robust-Hamiltonicity}, $G'$ contains a Hamilton cycle containing all edges in $E_0$.
    Then by replacing each edge $uw \in E_0$ with the path of length two containing a vertex in $X$, we obtain a Hamilton cycle in $G_{\tau_2}$.
\end{proof}

\section{Robust Hamiltonicity of $C$-expanders}\label{subsec:robust-hamiltonicity} 

In this section, we prove \Cref{thm:robust-Hamiltonicity}. Before we prove, we remind the statement of \Cref{thm:robust-Hamiltonicity}.
\robust*
\subsection{Embedding a sorting network}
For a graph $H$ and disjoint vertex subsets $A,B \subseteq V(H)$ with $|A| = |B|=m$, we say that $H$ is an \emph{$(A,B)$-linking structure} if for any bijection $\phi: A \to B$, there exists a set of vertex-disjoint paths $P_1, \ldots, P_m$ of equal length such that for each $i \in [m]$, $P_i$ is a path from $a$ to $\phi(a)$ for some $a \in A$ and the union of all paths covers $V(H)$.
The main lemma of this section is the following.
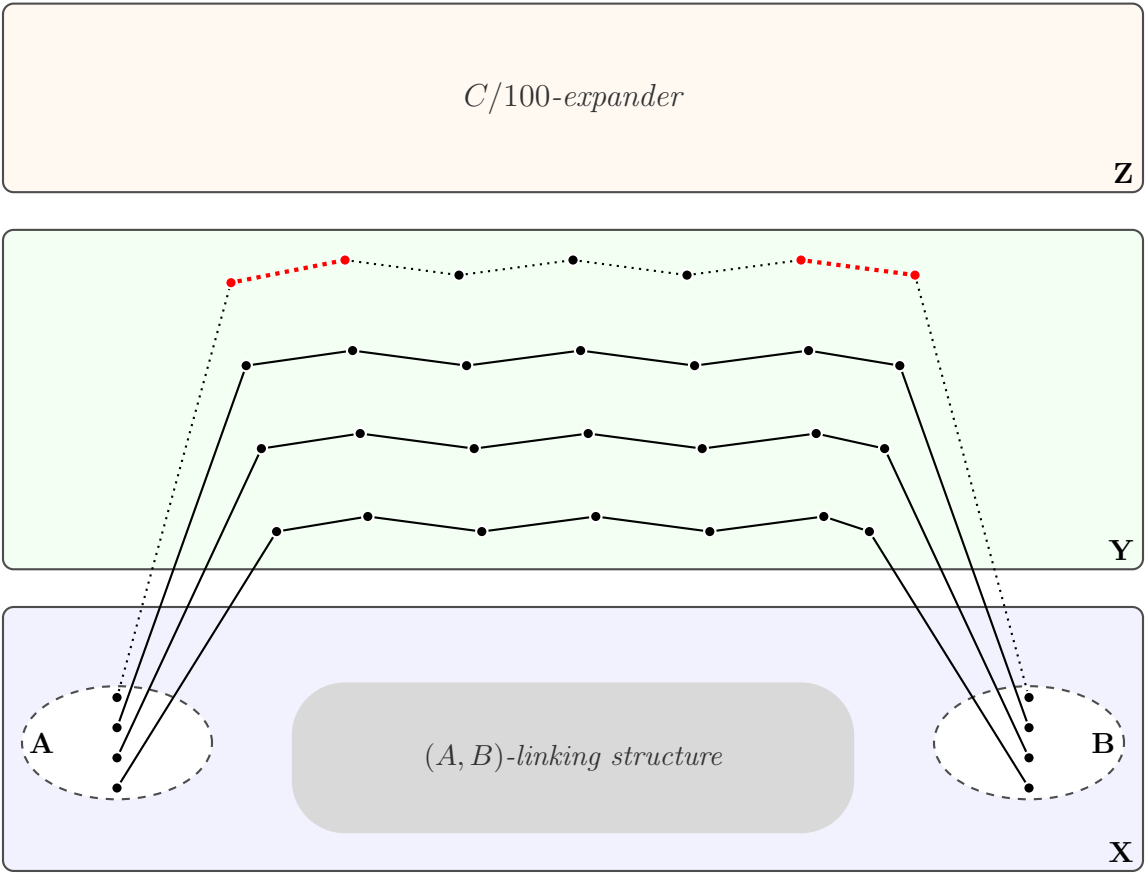
\begin{figure}
    \centering
   \begin{tikzpicture}[
    vertex/.style={circle, fill=black, inner sep=1.5pt, draw=white, thick},
    red_vertex/.style={circle, fill=red, inner sep=1.5pt, draw=white, thick},
    set/.style={rounded corners, thick, draw=black!70},
    subset/.style={ellipse, dashed, thick, draw=black!70, fill=white, minimum width=2.5cm, minimum height=1.5cm},
    edge/.style={thick, black},
    red_edge/.style={ultra thick, red},
    p0_edge/.style={thick, black, dotted},
    p0_red_edge/.style={ultra thick, red, dotted}
]

    \filldraw[fill=blue!5, set] (0, 0) rectangle (15, 3.5);
    \node[above left, font=\large\bfseries] at (15, 0) {X};

    \filldraw[fill=green!5, set] (0, 4) rectangle (15, 8.5);
    \node[above left, font=\large\bfseries] at (15, 4) {Y};

    \filldraw[fill=orange!5, set] (0, 9) rectangle (15, 11.5);
    \node[above left, font=\large\bfseries] at (15, 9) {Z};

    \fill[gray!30, rounded corners=20pt] (3.8, 0.5) rectangle (11.2, 2.5);
    \node[font=\itshape\large, text=black!80, align=center] at (7.5, 1.5) {$(A,B)$-linking structure};

    \node[subset] (setA) at (1.5, 1.7) {};
    \node[font=\bfseries\large, left] at (0.8, 1.7) {A};

    \node[subset] (setB) at (13.5, 1.7) {};
    \node[font=\bfseries\large, right] at (14.2, 1.7) {B};

    
    \node[vertex] (a0) at (1.5, 2.3) {}; 
    \node[vertex] (a1) at (1.5, 1.9) {};
    \node[vertex] (a2) at (1.5, 1.5) {};
    \node[vertex] (a3) at (1.5, 1.1) {};

    \node[vertex] (b0) at (13.5, 2.3) {}; 
    \node[vertex] (b1) at (13.5, 1.9) {};
    \node[vertex] (b2) at (13.5, 1.5) {};
    \node[vertex] (b3) at (13.5, 1.1) {};

    \node[red_vertex] (y0_1) at (3.0, 7.8) {}; 
    \node[red_vertex] (y0_2) at (4.5, 8.1) {};
    \node[vertex] (y0_3) at (6.0, 7.9) {}; 
    \node[vertex] (y0_4) at (7.5, 8.1) {};
    \node[vertex] (y0_5) at (9.0, 7.9) {};
    \node[red_vertex] (y0_6) at (10.5, 8.1) {}; 
    \node[red_vertex] (y0_7) at (12.0, 7.9) {};

    \draw[p0_edge] (a0) -- (y0_1); 
    \draw[p0_red_edge] (y0_1) --  (y0_2); 
    \draw[p0_edge] (y0_2) -- (y0_3) -- (y0_4) -- (y0_5) -- (y0_6); 
    \draw[p0_red_edge] (y0_6) --  (y0_7); 
    \draw[p0_edge] (y0_7) -- (b0);

    \node[vertex] (y1_1) at (3.2, 6.7) {}; \node[vertex] (y1_2) at (4.6, 6.9) {}; 
    \node[vertex] (y1_3) at (6.1, 6.7) {}; \node[vertex] (y1_4) at (7.6, 6.9) {};
    \node[vertex] (y1_5) at (9.1, 6.7) {}; \node[vertex] (y1_6) at (10.6, 6.9) {};
    \node[vertex] (y1_7) at (11.8, 6.7) {};
    
    \draw[edge] (a1) -- (y1_1) -- (y1_2) -- (y1_3) -- (y1_4) -- (y1_5) -- (y1_6) -- (y1_7) -- (b1);

    \node[vertex] (y2_1) at (3.4, 5.6) {}; \node[vertex] (y2_2) at (4.7, 5.8) {}; 
    \node[vertex] (y2_3) at (6.2, 5.6) {}; \node[vertex] (y2_4) at (7.7, 5.8) {};
    \node[vertex] (y2_5) at (9.2, 5.6) {}; \node[vertex] (y2_6) at (10.7, 5.8) {};
    \node[vertex] (y2_7) at (11.6, 5.6) {};
    
    \draw[edge] (a2) -- (y2_1) -- (y2_2) -- (y2_3) -- (y2_4) -- (y2_5) -- (y2_6) -- (y2_7) -- (b2);

    \node[vertex] (y3_1) at (3.6, 4.5) {}; \node[vertex] (y3_2) at (4.8, 4.7) {}; 
    \node[vertex] (y3_3) at (6.3, 4.5) {}; \node[vertex] (y3_4) at (7.8, 4.7) {};
    \node[vertex] (y3_5) at (9.3, 4.5) {}; \node[vertex] (y3_6) at (10.8, 4.7) {};
    \node[vertex] (y3_7) at (11.4, 4.5) {};
    
    \draw[edge] (a3) -- (y3_1) -- (y3_2) -- (y3_3) -- (y3_4) -- (y3_5) -- (y3_6) -- (y3_7) -- (b3);

    \node[font=\Large\itshape, text=black!80] at (7.5, 10.25) {$C/100$-expander};

\end{tikzpicture}

\caption{The structure of the graph $G'$ in \Cref{lem:structure-decomp-new}. Red edges indicate edges of $E_0$, and the dotted path indicates $P_0$. From~\Cref{lem:structure-decomp-new}(5), note that $G'[Z\cup (Y\setminus V(P_0))\cup A\cup B]$ is a $C/100$-expander although not shown on the picture.}
\end{figure}

\begin{lemma}\label{lem:structure-decomp-new}
    Let $C>10^{15}$, let $n$ be sufficient large and $G'$ be a $C$-expander on $n$ vertices with a set of identified edges $E_0 \subseteq E(G')$ of size at most $n^{0.12}$ such that any two edges in $E_0$ are at least distance $3$ apart in $G'$.
    Then, there exists a partition $V(G') = X\cup Y\cup Z$ and two disjoint sets $A,B\subseteq X$ such that the following holds:
    \begin{enumerate}
        \item $|X| \leq n/100$.
        \item $G'[X]$ is an $(A,B)$-linking structure and $|A|=|B|=n^{0.9}$.
        \item $G'[Y\cup A\cup B]$ has a spanning linear forest $\mathcal F$ with paths of length $n^{0.1}/5$ and endpoints in $A\cup B$.
        \item There is a path $P_0\in \mathcal F$ such that $E_0\subseteq P_0$. In particular, $V(E_0) \subseteq Y$.
        \item $G'[Z]$ and $G'[Z\cup (Y\setminus V(P_0))\cup A\cup B]$ are $C/100$-expander.
    \end{enumerate}
\end{lemma}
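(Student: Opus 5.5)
We would follow the strategy of Draganić, Montgomery, Munhá Correia, Pokrovskiy and Sudakov, carrying the forced edges along on a single path. The construction has five stages, done in order.

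\textbf{Stage 1: choose a random $Z$ and set aside the forced edges.} First pick a random vertex set $Z$ by including each vertex independently with a constant probability, say $1/2$. Chernoff bounds together with the $C$-expansion of $G'$ then give that whp every vertex keeps a constant fraction of its neighbourhood inside $Z$ and inside $V(G')\setminus Z$. From this, $G'[Z]$ and any induced subgraph containing $Z$ inherit expansion with constant $C/100$. There is one caveat: vertices of small degree in $G'$ do not concentrate. To handle them, split the expansion requirement into small sets (use the neighbourhoods directly) and large sets (use the edge condition between sets of size $n/2C$, which passes to subsets).

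Next remove $V(E_0)$ from $Z$. This is at most $2n^{0.12}$ vertices, negligible against the expansion scale, so the expansion is unaffected. Because the edges of $E_0$ are pairwise at distance at least $3$, their endpoints have disjoint closed neighbourhoods.

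\textbf{Stage 2: build the linking structure $X$.} Inside $V(G')\setminus Z$, embed an $(A,B)$-linking structure with $|A|=|B|=n^{0.9}$ on at most $n/100$ vertices. Do this exactly as in \cite{Draganic-et-al}: embed a sorting network (for example an AKS-type network of depth $O(\log n)$), replacing each comparator gadget by short vertex-disjoint paths. The paths are found by the extendability / Friedman--Pippenger tree-embedding method, which works in any graph with expansion constant $C$ on sets of size up to $n/2C$. The total size is $n^{0.9}\cdot\mathrm{polylog}(n)\le n/100$, which gives (1) and (2).

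\textbf{Stage 3: build the path $P_0$ through $E_0$.} Next build $P_0$. Order the edges of $E_0$ as $e_1,\dots,e_k$ with $k\le n^{0.12}$. Using the extendability method in $G'-X-Z$ (plus a reserve), connect the end of $e_i$ to the start of $e_{i+1}$ by paths of length $O(\log n)$ that avoid all other endpoints of $E_0$. The distance-$3$ condition ensures that the edges $e_i$ themselves are not adjacent, so each can be traversed as an edge of the path. Then attach a vertex of $A$ at one end and a vertex of $B$ at the other, and pad the path to length exactly $n^{0.1}/5$. This length is possible since $k\log n\ll n^{0.1}$.

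\textbf{Stage 4: cover $Y$ with the linear forest.} Put $Y=V(G')\setminus(X\cup Z)$. We must cover all of $Y$ by $n^{0.9}$ vertex-disjoint paths of length exactly $n^{0.1}/5$, each with endpoints in $A\cup B$. We would use absorption-free greedy path extension. First choose $Y$'s size to match, namely $|Y\cup A\cup B|=n^{0.9}(n^{0.1}/5+1)$: this fixes $|Y|\approx n/5$, so the random choice of $Z$ should have density near $4/5$ and is then trimmed. Then find the paths one at a time by extendability, growing each to exactly the prescribed length. Leftover vertices of $Y$ are moved into $Z$ and $|Y|$ is adjusted accordingly. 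Finally, each path's endpoints are routed into distinct vertices of $A$ and $B$.

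\textbf{Stage 5: expansion of the union in (5).} The second graph in (5) is $G'[Z\cup(Y\setminus V(P_0))\cup A\cup B]$, which is a supergraph of $G'[Z]$ on a vertex set missing only $X\setminus(A\cup B)$ and $V(P_0)$. By Stage 1 every vertex has many neighbours in $Z$, so its expansion follows in the same way as for $G'[Z]$.

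\textbf{Main obstacle.} We expect the hardest step to be Stage 4: producing an exact spanning linear forest of $Y$ with prescribed lengths and endpoints while preserving the expansion property (5). The difficulty is that an arbitrary leftover of $Y$ cannot be covered greedily. Our first attempt would be to take $Y$ slightly oversized and let the leftover vertices migrate into $Z$, which is harmless because $Z$ only needs to be an expander. We would then argue that adding vertices with many neighbours in $Z$ preserves $C/100$-expansion.
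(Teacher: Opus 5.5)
There is a genuine gap, and it is in Stage~1, which every later stage depends on. A $C$-expander need not have large degrees. Expansion of singletons only forces $\deg(v)\ge C$, and in the intended application (constant $d$) all degrees of $G'$ are bounded. Suppose $Z$ contains each vertex independently with constant probability. Then a vertex of bounded degree has fewer than $C/100$ neighbours in $Z$ with probability bounded below by a positive constant independent of $n$. So whp linearly many vertices of $Z$ have degree below $C/100$ in $G'[Z]$, and singletons already violate $C/100$-expansion. Your caveat does not repair this: for small $S\subseteq Z$, nothing forces $N_{G'}(S)$ to meet $Z$ in $(C/100)|S|$ vertices.

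The defect propagates. Stages~2--3 run the extendability method inside $G'-Z$, which would itself have to expand. Stages~4--5 assume every vertex has many neighbours in $Z$, and even that, vertex by vertex, would not give $|N(S)\cap Z|\ge (C/100)|S|$ for sets whose neighbourhoods overlap. Cleaning the bad vertices out of $Z$ only moves them into $Y$, where they must be covered exactly by the paths. That is precisely the problem you identify as the main obstacle.

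The missing idea is that no random partition is needed: extendability of the embedded structure is itself the certificate that its complement expands. The paper works in $G'$ throughout and keeps the union of everything embedded so far extendable. It embeds, in order:
\begin{enumerate}
\item $E_0$, which is extendable because, by the distance-$3$ condition, each vertex has at most two neighbours in $V(E_0)$;
\item $A\cup B$, as the vertex set of an extendable path whose edges are then discarded;
\item the linking structure, path by path, via path-constructibility and Lemma~\ref{lem:extendable};
\item $P_0$ through the edges of $E_0$;
\item the remaining $A$--$B$ paths.
\end{enumerate}
Only afterwards does it put $Y:=V(\mathcal F)\setminus(A\cup B)$ and $Z:=V(G')\setminus(X\cup Y)$. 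The embedded graph has bounded maximum degree, so Definition~\ref{def:extendability} with $D=C/50$ directly gives every $S$ with $|S|\le n/2C$ at least $(C/50-O(1))|S|$ neighbours outside the embedded graph, i.e.\ in $Z$. This yields both expanders in (5) at once, and the large-set condition is inherited from $G'$. In particular your main obstacle disappears: $Y$ is not prescribed in advance; it is simply whatever the paths cover.

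Separately, your claim `$k\log n\ll n^{0.1}$' is false for $k$ up to $n^{0.12}$. The edges of $E_0$ are pairwise vertex-disjoint, so a path through all of them has length at least $2|E_0|-1$. Hence (3)--(4) with length exactly $n^{0.1}/5$ cannot hold once $|E_0|>n^{0.1}/10$. This inconsistency is already in the statement; the paper's connecting paths of length $n^{0.1}/(5|E_0|)-1$ fail there too. It is harmless for the later use if $P_0$ is allowed a longer length, such as $n^{0.13}$, but it has to be fixed rather than argued away.
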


We apply the Friedman-Pippenger technique~\cite{FP1987} (see also~\cite{DKN2022}) to prove \Cref{lem:structure-decomp-new}.
We first define \emph{extendability}.
\begin{definition}\label{def:extendability}
    Let $D,m$ be positive integers with $D\geq 3$. Let $G$ be a graph and let $H\subseteq G$ be a subgraph with $\Delta(H)\leq D$. Then $H$ is \emph{$(D,m)$-extendable} (in $G$) if for every $S\subseteq V(G)$ with $1\leq |S|\leq 2m$ we have 
    \[
        |\Gamma_G(S) \setminus V(H)| \geq (D-1)|S| - \sum_{u\in S\cap V(H)}(d_H(u)-1)\,.
    \]
\end{definition}
The following lemma allows us to extend the subgraph without destroying the extendability property:
\begin{lemma}[{\cite[Corollary 3.12]{montgomery2019spanning}}]\label{lem:extendable}
    Let $C> 10D$ and let $G$ be an $n$-vertex $C$-expander with a $(D,n/2C)$-extendable subgraph $H$ with $|H|\leq n-5nD/C-\ell$ where $\ell \geq \log n$. Then the following holds for all vertices $y\in V(H)$ with $\deg_H(y)\leq D/2$.
    \begin{itemize}[itemsep = 0.3em]
        \item There exists a path $P$ in $G$ with endpoints $y$ of length $\ell$ such that all its vertices except for $y$ lie outside of $H$ and $H\cup P$ is $(D,n/2C)$-extendable.
        \item For every $x\in V(H)\setminus\{y\}$ with $\deg_H(x)\leq D/2$, there exists an $xy$-path $P$ in $G$ of length $\ell$ such that all its internal vertices lie ouside of $H$ and $H\cup P$ is $(D,n/2C)$-extendable.
    \end{itemize}
\end{lemma}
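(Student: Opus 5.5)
The plan is to follow the Friedman--Pippenger approach as developed by Glebov--Johannsen--Krivelevich and Montgomery. Everything reduces to two elementary operations on a $(D,m)$-extendable subgraph $H$, with $m=n/2C$. \textbf{Leaf addition:} if $v\in V(H)$ has $\deg_H(v)\le D-1$ and $|\Gamma_G(S)\setminus V(H)|$ is large for the relevant sets $S$, then some neighbour $w\notin V(H)$ of $v$ keeps $H+vw$ extendable. \textbf{Leaf removal:} deleting a leaf of $H$ preserves $(D,m)$-extendability.

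\emph{Leaf removal.} This is a direct check of \Cref{def:extendability}. Removing a leaf $w$ attached at $u$ decreases $d_H(u)$ by one, which raises the right-hand side only for sets $S\ni u$. The same removal frees $w$ from $V(H)$ and lowers the right-hand side for sets containing $w$. Combining these two effects settles every set $S$.

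\emph{Leaf addition.} This is the standard Friedman--Pippenger argument. Call a set $S$ with $|S|\le 2m$ \emph{critical} if it is tight in the extendability inequality. One shows that the union of two critical sets is again critical, as long as its size stays at most $2m$; this uses submodularity of $S\mapsto|\Gamma(S)\setminus V(H)|$. So there is a maximal critical set $S^*$. Sets of size between $m$ and $2m$ cannot be critical, because the $C$-expander property gives $|\Gamma_G(S)|\ge n-m$, and the hypothesis $|H|\le n-5nD/C-\ell$ leaves enough room outside $V(H)$. Hence $|S^*|\le m$. Expansion $|N(S^*\cup\{v\})|\ge C|S^*|$ together with tightness then forces $v$ to have a neighbour $w\notin V(H)\cup\Gamma(S^*)$. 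Adding $vw$ breaks no inequality: any set that becomes violated would, after merging with $S^*$, contradict the maximality of $S^*$.

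\emph{First bullet.} Start from $y$, which has $\deg_H(y)\le D/2\le D-1$, and apply leaf addition $\ell$ times, each time at the current endpoint of the growing path. The new endpoint always has degree $1$, so the step is always allowed. Throughout, the size bound stays within the $n-5nD/C$ budget, since $|H|$ grows to at most $|H|+\ell$.

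\emph{Second bullet.} Choose $k=\lceil \log n/\log(D/3)\rceil$, which satisfies $2k+1\le\ell$ after adjusting constants; I will take $\ell\ge \log n$ with the constant absorbed into $D$ and $C$, and this is the step to double-check. First extend a path of length $\ell-2k-1$ out of $y$, as in the first bullet; call its new endpoint $y'$. Then grow, by repeated leaf additions, a tree $T_x$ rooted at $x$ and a tree $T_{y'}$ rooted at $y'$. In each tree every non-leaf vertex receives about $D/2-1$ children, and all leaves lie at depth exactly $k$. Leaf addition can continue at any vertex of degree $\le D/2$, so both roots, which have $H$-degree at most $D/2$, can each take about $D/2$ new children.

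The main obstacle is making the leaf sets $L_x,L_{y'}$ at depth $k$ large, say of size at least $m$, so that the $C$-expander property produces an edge between them. With branching $D/2-1$, depth $k$ is enough for this. The catch is that the trees would then exceed size $m$ long before depth $k$. I would try to cap growth instead: stop branching once a level reaches size about $m/D$. At that point use the expansion directly, via extendability applied to the leaf set $S$ (its neighbourhood outside $H$ has size $\ge (D-2)|S|$), and use the large-set edge property to join the two trees. This gives an $x$--$y'$ path of length exactly $2k+1$ through the trees.

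The joining edge $ab$ has both endpoints of tree-degree $1$, so adding it keeps $\deg_H\le D$. Its extendability is handled by the same critical-set argument, since $a$ and $b$ are both already in $H$. Finally, prune all tree vertices not on the $x$--$a$--$b$--$y'$ path leaf by leaf, using leaf removal. What remains is $H\cup P$ with $P$ of length exactly $\ell$, and it is $(D,n/2C)$-extendable.
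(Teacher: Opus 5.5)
\textbf{Gap in the connecting step.} The gap is in the second bullet. The obstacle you name there does not exist, and the cap you add to avoid it breaks the argument.

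Nothing forces the trees to have at most $m=n/2C$ vertices. Extendability only constrains sets of size at most $2m$. Leaf addition only needs about $(2D+3)m$ vertices outside the current graph, since that is what makes sets with $m\le|S|\le 2m$ non-tight via the large-set property. The hypothesis $|H|\le n-5nD/C-\ell=n-10Dm-\ell$ therefore leaves room for the preliminary path plus two trees of size $O(Dm)$.

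With the cap, the leaf sets have size about $m/D<m$, so the large-set property cannot be applied to them. Routing instead through $\Gamma(L_x)\setminus V(H')$ and $\Gamma(L_{y'})\setminus V(H')$ puts vertices on $P$ that were not produced by leaf addition. Leaf addition only guarantees that \emph{some} neighbour of a given vertex is safe, not a prescribed one, so extendability of $H\cup P$ is no longer justified. The length also stops being controlled. If $k'$ is the capped depth, a common neighbour of the two leaf sets gives length $2k'+2$ and an edge between their neighbourhoods gives $2k'+3$, never the claimed $2k'+1$.

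\textbf{The fix.} Drop the cap. Grow $T_x$ and $T_{y'}$ by leaf additions until level $k=O(\log n/\log D)$ of each has at least $m$ vertices; you may branch only partially at the last level. The $C$-expander then gives an edge $ab$ with $a\in L_x$ and $b\in L_{y'}$. Adding an edge between two vertices already in $V(H')$ of degree less than $D$ is free: the left-hand side is unchanged and the right-hand side only decreases, so no critical-set argument is needed. Then prune. This is Montgomery's argument, which the paper cites without proof.

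\textbf{Two smaller repairs.}
\begin{itemize}
\item \emph{Leaf removal.} The right-hand-side term of $w$ is $d_H(w)-1=0$, so deleting $w$ does not lower the right-hand side. The compensation for a set $S\ni u$ is that $w\in\Gamma_G(S)$, so the left-hand side rises by one. As written, your description misses sets containing $u$ but not $w$.
\item \emph{Leaf addition.} Vertex expansion of $S^*\cup\{v\}$ says nothing about where the neighbours of $v$ lie. What you need is the extendability inequality for $S^*\cup\{v\}$, whose right-hand side exceeds that of $S^*$ by $D-d_H(v)\ge 1$. Also, $S^*$ must be the union of the tight sets \emph{not containing} $v$. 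Tight sets containing $v$ are harmless because $d(v)$ rises too. If instead $S^*$ is the union of all tight sets, $v$ may lie in $S^*$ and the argument yields nothing.
\end{itemize}
Your caveat about needing $2k+1\le\ell$ is justified. It requires $D$ to be a sufficiently large constant, which holds in every application in the paper.
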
 
To keep applying \Cref{lem:extendable}, the graph needs to have a bounded degree. This motivate the definition of $T$-path-constructible introduced by Hyde, Morrison, M\"uyesser and Pavez-Sign\'{e}~\cite{Sorting-network}:
\begin{definition}[{\cite[Definition 4.2]{Sorting-network}}]\label{def:path-constructible}
    Let $G$ be a graph, $T\subseteq V(G)$ and $1\leq \ell_1\leq l_2$ be integers. We say $G$ is $T$-path-constructible with paths of length between $\ell_1$ and $\ell_2$ if there exists a sequence of paths $P_1,\dots, P_t$ of length between $\ell_1$ and $\ell_2$ satisfying that:
    \begin{enumerate}
        \item $E(G) = \cup_{j\in [t]}E(P_j)$.
        \item For each $i\in [t]$, the internal vertices of $P_i$ are disjoint from $A\cup \bigcup_{j\in [i-1]}V(P_j)$.
        \item For each $i\in [t]$, at least one of the endpoints of $P_i$ belongs to $A\cup \bigcup_{j\in[i-1]}V(P_j)$.
    \end{enumerate}
\end{definition}

Hyde, Morrison, M\"uyesser and Pavez-Sign\'{e}~\cite{Sorting-network} shows there exists an $(A,B)$-linking structure with certain properties, while we use the following weaker version of their result, which is sufficient for our purpose.
\begin{lemma}[{\cite[Lemma 5.5]{Draganic-et-al}}]\label{lem:exist-linking-structure}
    For all sufficiently large $N$, there exists a graph $H$ with at most $N^{1.1}$ vertices and disjoint sets $A,B\subseteq V(H)$ such that $H$ is an $(A,B)$-linking structure and the following holds:
    \begin{enumerate}
        \item $|A| = |B| = N$ and $A\cup B$ is independent set in $H$.
        \item $\Delta(H) \leq 4$.
        \item $H$ is $(A\cup B)$-path-constructible with paths of length between $10\log N$ and $40 \log N$.
    \end{enumerate}
\end{lemma}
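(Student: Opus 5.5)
The plan is to realise a rearrangeable permutation network (a Beneš-type network) as a bounded-degree graph, replacing every wire and every switch edge by a long path. Pick $k$ with $2^k\ge N$. The Beneš network on $2^k$ terminals has $2k-1$ layers of $2\times 2$ switches. It can realise every permutation of its inputs with vertex-disjoint routes, and every route passes through exactly one switch per layer. To get exactly $N$ terminals, either use the general-$N$ version (Waksman-type networks, which are rearrangeable for every $N$) or fix the dummy terminals by hard-wiring them. I expect the general-$N$ version to be cleaner, so I would commit to that.

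Next, model each switch by a gadget: a copy of $K_{2,2}$ on entry vertices $x_1,x_2$ and exit vertices $y_1,y_2$. The two perfect matchings $\{x_1y_1,x_2y_2\}$ and $\{x_1y_2,x_2y_1\}$ are exactly the states ``straight'' and ``cross'', and either one covers all four gadget vertices. Now build $H$.
\begin{itemize}
\item Put the $N$ vertices of $A$ on the input side and the $N$ vertices of $B$ on the output side.
\item Join each output terminal of one layer to the corresponding input terminal of the next layer by a path of length exactly $L=\lceil 10\log N\rceil$. Do the same for the wires from $A$ into the first layer and from the last layer into $B$.
\item Replace every gadget edge $x_iy_j$ by a path of length exactly $L$, disjoint from everything else.
\end{itemize}
Given a bijection $\phi:A\to B$, take the switch settings that route $a$ to $\phi(a)$ and follow the corresponding subdivided edges. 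Every route uses the same number of connector paths and gadget paths, all of length $L$, so all $N$ paths have equal length. In each gadget the two unused subdivided edges carry internal vertices that nobody covers. This must be fixed to get a spanning family, and it is the step I expect to be the main obstacle. My first attempt is to replace $K_{2,2}$ by a gadget in which both switch states admit a spanning pair of equal-length paths. For example, take the $2\times 2$ grid-like gadget with an extra middle layer, where the straight and cross states each sweep through all internal vertices: an $x_1$-path and an $x_2$-path that zigzag through a $2\times m$ ladder and either keep or swap sides at the end. In a ladder $P_m\square P_2$, paths from the two left corners to the two right corners exist in both the parallel and the crossed configuration, covering all vertices with equal lengths. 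I would then subdivide the rungs and rails uniformly by a factor $\Theta(\log N)$ and check that equal lengths are preserved. The maximum degree in such a ladder is $3$, and attaching a connector at a corner gives degree at most $3$, so $\Delta(H)\le 4$ holds.

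The remaining items are routine checks.
\begin{itemize}
\item \emph{Size.} There are $O(N\log N)$ gadgets, each of size $O(\log^2 N)$ after subdivision, so $|H|=O(N\log^3 N)\le N^{1.1}$.
\item \emph{Independence of $A\cup B$.} All terminals are joined only by paths of length $L\ge 2$, so $A\cup B$ is independent in $H$.
\item \emph{Path-constructibility.} Order the paths layer by layer starting from $A$. A connector path has an endpoint that is already built, namely a vertex of $A$ or a terminal of the previous gadget. Inside a gadget, each subdivided rail or rung segment has an endpoint on an already-built corner or branch vertex. Its internal vertices are fresh, since subdivision vertices belong to exactly one path. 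The paths into $B$ are added last, each with its gadget-side endpoint already present.
\end{itemize}
For path lengths to lie in $[10\log N,40\log N]$, every segment of a gadget that becomes its own path must have length between $L$ and $4L$. I would arrange this by making the ladder have constant length $m$ and subdividing each edge by exactly $L$.
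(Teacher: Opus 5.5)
\textbf{Gap: the switch gadget fails.} The paper does not prove this lemma; it imports it as a black box from Dragani\'c et al.\ (their Lemma~5.5), which rests on the sorting-network construction of Hyde, Morrison, M\"uyesser and Pavez-Sign\'e. Your overall plan, a rearrangeable network whose switches are bounded-degree gadgets joined by long paths, is the right kind of idea. However, the gadget step does not work, and this is a genuine gap.

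First, the ladder claim is false. In $P_m\square P_2$ with $x_1=(1,1)$, $x_2=(1,2)$, $y_1=(m,1)$, $y_2=(m,2)$, each column $\{(i,1),(i,2)\}$ with $1<i<m$ separates the left corners from the right ones. So two disjoint left-to-right paths each meet every column in exactly one vertex. Hence they use no rung and are just the two rails, and the crossed state does not exist.

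Second, independently of the gadget, subdividing every edge is fatal. A vertex of degree $2$ outside $A\cup B$ must be an interior vertex of one of the $N$ paths, so both of its edges are used. In your $H$ every edge meets such a vertex, so every realisation uses all of $E(H)$. Then $H$ is itself a union of $N$ disjoint paths and can realise only one bijection. So edges on which two switch states differ must join vertices of degree at least $3$ and cannot be subdivided. This is actually the only reason your first gadget failed: an \emph{unsubdivided} $K_{2,2}$, with the connectors attached at its four vertices, already covers all four vertices in both states, with traversals of length $1$.

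\textbf{The real obstacle is path-constructibility.} Your ``routine'' check assumes every edge lies inside a long subdivided segment, so it skips exactly the hard part. A single edge must lie in a construction path of length at least $10\log N$ in which one of its ends is an interior, hence fresh, vertex. Also, every vertex is interior to at most one construction path. For the $K_{2,2}$ gadget, the four gadget vertices form a $4$-cycle of degree-$3$ vertices; the same happens if you relabel the ladder so that both states exist. Now argue as follows.
\begin{itemize}
\item Each maximal run of cycle edges inside one construction path has length at most $3$, so its path must leave the cycle through one of the run's two end vertices.
\item Runs and junction vertices alternate around the cycle. Each junction can be interior to only one path, so each run needs its own junction, and all runs leave through, say, their clockwise end.
\item The other end of each run is then an endpoint of its path but interior to the preceding run's path, so the preceding path must be built first. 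Going around the cycle, this is impossible.
\end{itemize}
So this gadget is not $(A\cup B)$-path-constructible with paths of length at least $4$. What is missing is a switch gadget that simultaneously realises both states spanningly with equal lengths, has maximum degree at most $4$, and is path-constructible with paths of length $\Theta(\log N)$. Supplying such a gadget is exactly the content of the cited construction. (A minor further point: in Waksman-type networks different routes pass through different numbers of switches, so the bypass wires must be padded to make all route lengths equal.)
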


We are now ready to prove \Cref{lem:structure-decomp-new}.
\begin{proof}[Proof of \Cref{lem:structure-decomp-new}]
    Let $H$ be the graph obtained from \Cref{lem:exist-linking-structure} with $N = n^{0.9}$ and let $A_H ,B_H \subseteq V(H)$ be the corresponding sets.
    By \Cref{lem:exist-linking-structure}, we have $|V(H)| \leq n/100$, $\Delta(H) \leq 4$, $A_H \cup B_H$ is an independent set in $H$ and $H$ is $(A_H \cup B_H)$-path-constructible with paths of length between $10\log n$ and $40\log n$.

    Without loss of generality, we may assume that $E_0 \neq \emptyset$.
    As edges in $E_0$ are at least distance $3$ apart in $G'$, every vertex $v \in V(G')$ has at most two neighbors in $V(E_0)$.
    Thus, for every $S \subseteq V(G')$ with $|S| \leq n/2C$, we have $|N_{G'}(S) \setminus V(E_0)| \geq |N_{G'}(S)| - 2|S| \geq (C-2)|S| \geq (C/2-1)|S|$. 
    Thus, if we view $E_0$ as a subgraph on $V(E_0)$ with the edge set $E_0$, then it is a $(C/10, n/2C)$-extendable subgraph of $G'$.
    We now choose an arbitrary edge $xy \in E(G') \setminus V(E_0)$.
    From \Cref{def:extendability}, adding the edge $xy$ to $E_0$ results in a $(C/20, n/2C)$-extendable subgraph of $G'$.
    By applying \Cref{lem:extendable}, we can find a path $P$ of length $|A|+|B|$ such that $P$ does not meets $V(E_0)$ and $P \cup E_0$ is a $(C/20, n/2C)$-extendable subgraph of $G'$.
    As $\Delta(P) =2$, it implies for every set $S \subseteq V(G')$ with $|S| \leq n/C$, we have $|N_{G'}(S) \setminus (V(P) \cup V(E_0))| \geq (C/20-1)|S| - \sum_{u \in S \cap (V(P) \cup V(E_0))} (d_H(u)-1) \geq (C/20-2)|S|$.
    Thus, the empty graph on $V(P)$ union with $E_0$ is a $(C/50, n/2C)$-extendable subgraph of $G'$.

    We partition $V(P)$ arbitrarily into two disjoint sets $A$ and $B$ of equal size.
    We view the empty graph on $A \cup B$ as an embedding of $H[A_H \cup B_H]$ into $G''$ with $A_H$ mapped to $A$ and $B_H$ mapped to $B$.
    Since $H$ is $(A_H\cup B_H)$-path-constructible, there exists a sequence of paths $P_1,\dots, P_t$ of length between $10\log n$ and $40\log n$ such that $E(H) = \cup_{j\in [t]}E(P_j)$. Moreover, if we let $H_0 = H[A_H\cup B_H]$ and $H_i = H_{i-1}\cup P_i$ for all $1\leq i\leq t$, then for every $1\leq i\leq t$, the internal vertices of $P_i$ does not intersect $V(H_{i-1})$ and at least one endpoint of $P_i$ belongs to $V(H_{i-1})$. 
    Therefore, if $H_{i-1}$ is embedded into $G' \setminus V(E_0)$ so that the image together with $E_0$ is a $(C/200, n/2C)$-extendable subgraph, then by applying \Cref{lem:extendable}, we can extend the embedding to $H_i$ such that $H_i$ together with $E_0$ is also a $(C/200, n/2C)$-extendable subgraph of $G'$.
    Note that for each $i \in [t]$, $|V(H_i)| + |V(E_0)| \leq n/100 + 2n^{0.12} \leq n - n/10 - 40\log n$, thus \Cref{lem:extendable} can be applied.
    Therefore, by using induction, we can embed $H$ into $G' \setminus V(E_0)$ so that the image together with $E_0$ is a $(C/50, n/2C)$-extendable subgraph.
    Let $\varphi$ be the embedding and let $X = \varphi(V(H))$.

    We now aim to find paths of length $n^{0.1}/5$ connecting $A$ and $B$. 
    Recall that we need a special path $P_0$ for which $E_0\subseteq E(P_0)$, which we will find first. 
    We note that as $\varphi(H) \cup E_0$ is a $(C/50, n/2C)$-extendable subgraph of $G'$, 
    $\varphi(H)$ is also a $(C/50, n/2C)$-extendable subgraph of $G'$.
    
    Let $u_1w_1,u_2w_2,\ldots,u_{|E_0|}w_{|E_0|}$ be an enumeration of all edges in $E_0$. 
    We choose arbitrary $(a, b) \in A \times B$ and denote $w_0:=a$ and $u_{|E_0|+1}:=b$.
    By applying \Cref{lem:extendable} $|E_0|+1$ times, we construct the desired path $P_0$ as follows: 
    Suppose that $P^\prime_0,P^\prime_1,\ldots, P^\prime_{i-1}$ have been constructed for some $i\in [|E_0|]$.
    Since $|E_0|\le n^{0.12}$, by applying \Cref{lem:extendable}, one can find a path $P^\prime_i$ of length $((n^{0.1}/(5|E_0|))-1)$ with internal vertices in $V(G^\prime)\setminus(X\cup V(S)\cup \bigcup_{j<i}V(P^\prime_j))$ that connect 
    $w_i,u_{i+1}$ for each $i\in[|E_0|]$.
    Furthermore, the graph $\varphi(H) \cup E_0 \cup \bigcup_{j\leq i}P^\prime_j$ is still $(C/50, n/C)$-extendable.
    Let $P_0$ be the concatenation of $P^\prime_0, P^\prime_1, \ldots, P^\prime_{|E_0|}$ and edges in $E_0$. Then $P_0$ is the desired length-$(n^{0.1}/5)$ path. 

    As $\varphi(H) \cup P_0$ is still $(C/50, n/C)$-extendable, we apply \Cref{lem:extendable} to $A\setminus\{a\}$ and $B\setminus\{b\}$, one may obtain a family $\mathcal{F}^\prime_1$ of length-$(n^{0.1}/5)$ paths with the endpoint set $A\cup B\setminus\{a,b\}$. Set $\mathcal{F}_1=\{P_0\}\cup\mathcal{F}^\prime_1$, $Y=V(\mathcal{F}_1)\setminus(A\cup B)$ and $Z=V(G^\prime)\setminus (X\cup Y)$.
    We now claim that $G'[Z]$ and $G'[Z\cup (Y\setminus V(P_0))\cup A\cup B]$ are $C/100$-expander.
    Let $S \subseteq Z$ be an arbitrary vertex subset with $|S| \leq n/2C$. 
    Then $|N_{G'[Z]}(S)| = |N_{G'}(S) \setminus (X\cup Y)| = |N_{G'}(S) \setminus V(\varphi(H) \cup \mathcal{F}_1)| \geq (C/50-2) \cdot |S|$ as $\varphi(H) \cup \mathcal{F}_1$ is a $(C/50, n/C)$-extendable subgraph of $G'$.
    Similarly, for every $S \subseteq  Z\cup (Y\setminus V(P_0))\cup A\cup B$ with $|S| \leq n/2C$, we have $|N_{G'[Z\cup (Y\setminus V(P_0))\cup A\cup B]}(S)| \geq |N_{G'}(S) \cap Z| \geq (C/50-9) \cdot |S|$ as $\varphi(H) \cup \mathcal{F}_1$ has maximum degree at most $6$.
    In addition, as $G'$ is a $C$-expander, for any two disjoint vertex subsets $U, W \subseteq Z$ ($Z\cup (Y\setminus V(P_0))\cup A\cup B$, resp) of size $n/2C$, there exists an edge between $U$ and $W$.
    Since $|Z| \geq 3n/4$, this implies that $G'[Z]$ and $G'[Z\cup (Y\setminus V(P_0))\cup A\cup B]$ are $C/100$-expander.
\end{proof}

\subsection{Almost spanning linear forest}
Our next step is to find a spanning linear forest in $G' \setminus (X \setminus (A \cup B))$ that contains all the edges of $E_0$ and endpoints are $A \cup B$.
The key lemma is the following:
\begin{lemma}\label{lem:paths-in-remaining}
    Let $G'$ be an $n$-vertex $C$-expander for $C > 10^{10}$ and $E_0$ be a set of edges with $|E_0| \leq n^{0.12}$ such that any two edges in $E_0$ are at least distance $3$ apart in $G'$.
    Let $A, B, X, Y, Z, \mathcal{F}, P_0$ be defined as in \Cref{lem:structure-decomp-new}.
    Then, there exists a spanning linear forest $\mathcal F_0$ in $G' \setminus (X \setminus (A \cup B))$ such that the following holds:
    \begin{itemize}
        \item $P_0 \in \mathcal{F}_0$. In particular, all edges of $E_0$ are covered by $\mathcal{F}_0$.
        \item $\mathrm{End}(\mathcal{F}_0) = A \cup B$.
    \end{itemize} 
\end{lemma}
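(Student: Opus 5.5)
We start from the linear forest $\mathcal F$ given by \Cref{lem:structure-decomp-new}(3): its paths cover $Y\cup A\cup B$, have endpoints exactly in $A\cup B$, and $P_0\in\mathcal F$ already contains $E_0$. What remains is to absorb the vertices of $Z$ into the forest. We keep $P_0$ untouched throughout. We modify only the other paths $\mathcal F\setminus\{P_0\}$ and the leftover set $Z$, working in the graph $G^*:=G'[Z\cup (Y\setminus V(P_0))\cup A\cup B]$, which is a $C/100$-expander by \Cref{lem:structure-decomp-new}(5). Because $P_0$ is never altered, the first bullet holds automatically. The task therefore reduces to the following: find a spanning linear forest of $G^*$ whose endpoint set is exactly $(A\cup B)\setminus V(P_0)$, with the same number of paths as $\mathcal F\setminus\{P_0\}$.

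The plan is to absorb $Z$ greedily, one vertex or short segment at a time, using Pósa rotation--extension as in~\cite{Draganic-et-al}. Fix the current forest $\mathcal F'$ with endpoints in $A\cup B$ and an uncovered vertex $z\in Z$. First, cover $Z$ by a small number of paths. Since $G'[Z]$ is a $C/100$-expander, repeatedly taking longest paths (Pósa) partitions $Z$ into paths, and then extendability or connectivity arguments are used to cut their number down. Second, splice these $Z$-paths into the paths of $\mathcal F'$. Take a path $Q$ of $\mathcal F'$ with endpoints $a\in A$ and $b\in B$, and temporarily delete its endpoint structure. Rotate $Q$, keeping $a$ fixed, to obtain a set $R$ of at least $n/(2C')$ possible new ends. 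By the expander property, $R$ sends an edge to the large set of ends obtained by rotating a $Z$-path. This merges the two paths; afterwards we rotate back to restore an endpoint in $B$. The endpoints in $A\cup B$ must be preserved, so every rotation is performed with one endpoint fixed. To recover the correct second endpoint, we use a second round of rotations from the other side, or equivalently the standard ``connecting two rotation sets'' trick: the endpoint sets after rotation from both sides have size $\Omega(n/C)$, and \Cref{lem:structure-decomp-new}(5) guarantees an edge between any two disjoint sets of size $n/2C$.

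The main obstacle is the endpoint constraint together with the lengths. The linking structure in \Cref{lem:structure-decomp-new}(2) needs only the endpoints to lie in $A\cup B$, and each path must go from $A$ to $B$ (or $A$ to $A$ is fine after re-pairing, provided the pairing is a bijection $A\to B$). So I would impose the invariant that every path has exactly one end in $A$ and one in $B$. Rotations are then performed only at an internal point of a path, never moving the designated ends. This is achieved by first detaching a terminal segment of length about $n^{0.1}/10$ near each end, doing the Pósa rotations inside the union of these segments and $Z$, and then reattaching. The Pósa argument needs the expansion of small sets in $G^*$, which holds with constant $C/100$, large enough for the rotation lemma to give $|R|\ge n/(2C)$ as long as the uncovered set is nonempty. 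Each absorption step strictly increases the number of covered vertices, so the process terminates with $Z$ fully covered, giving $\mathcal F_0=\{P_0\}\cup\mathcal F'$ with $\mathrm{End}(\mathcal F_0)=A\cup B$.
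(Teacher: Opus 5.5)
Your reduction is right: keep $P_0$ fixed, work in $G^*=G'[Z\cup(Y\setminus V(P_0))\cup A\cup B]$, which is an expander by \Cref{lem:structure-decomp-new}(5), and look for a spanning linear forest of $G^*$ whose endpoint set is exactly $(A\cup B)\setminus V(P_0)$. The gap is in the splicing step.

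\textbf{Rotating a single path cannot give $\Omega(n/C)$ ends.} Pósa rotations of a path $Q\in\mathcal F'$ with one end fixed only ever produce endpoints in $V(Q)$. Here $|V(Q)|=n^{0.1}/5+1\ll n/(2C)$, so the claimed set $R$ of size $n/(2C')$ cannot exist. The same applies to your ``connecting two rotation sets'' step.

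\textbf{Pósa's lemma does not apply inside a spanning forest.} The bound $|N(R)|<2|R|$ needs every neighbour of every reachable endpoint to lie on the path being rotated. In a spanning linear forest, endpoints have neighbours on other paths. Using those edges splits and joins other paths, so the operation is no longer confined to one path.

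\textbf{You cannot steer back into $B$.} ``Rotating back to restore an endpoint in $B$'' has no justification. Rotations give a large set of possible ends, not a prescribed vertex. Since $|B|=n^{0.9}=o(n)$, a set of size $\Theta(n/C)$ need not meet $B$ at all.

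\textbf{Other steps are uncontrolled.} The detaching-segments step, and the effect of arbitrarily many absorption steps on whatever your rotations require, are never checked. Your ``longest paths plus extendability'' sketch for covering $Z$ by few paths is also unsupported; this is a nontrivial result, namely \Cref{lem:few-paths}.

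\textbf{The missing ingredient} is a rotation lemma for linear forests that deletes two prescribed endpoints and leaves every other endpoint untouched. That is \Cref{lem:reduce-endpoint}, and the paper uses it as follows.
\begin{itemize}
\item Apply \Cref{lem:few-paths} to $G'[Z]$ to cover $Z$ by at most $|Z|^{0.8}$ paths with no isolated vertices, and add the paths of $\mathcal F\setminus\{P_0\}$.
\item Repeatedly apply \Cref{lem:reduce-endpoint} in $G^*$ to a pair $x,y$ of endpoints outside $A\cup B$. By parity, such a pair exists until none remain.
\item Since the endpoint set changes only by losing $x$ and $y$, the endpoints in $A\cup B$ are preserved automatically, and no ``rotating back'' is needed.
\item There are at most $n^{0.8}$ steps, each changing $O(\log n)$ edges. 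Hence at least $n^{0.9}/2$ of the original $A$--$B$ paths of length $n^{0.1}/5$ survive intact.
\item These surviving paths keep at least a tenth of $V(G^*)$ on paths of length between $100$ and $\sqrt{|V(G^*)|}$. This is exactly the hypothesis needed to keep applying \Cref{lem:reduce-endpoint}.
\end{itemize}

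Finally, the statement only asks for $\mathrm{End}(\mathcal F_0)=A\cup B$. \Cref{lem:reduce-endpoint} does not preserve an $A$--$B$ pairing, so do not build your argument around the invariant that every path has one end in $A$ and one in $B$. For closing up with the linking structure it suffices that the numbers of $A$--$A$ and $B$--$B$ paths agree, which holds automatically since $|A|=|B|$.
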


The first key lemma is to prove that $G'[Z]$ contains a spanning linear forest with few paths.
\begin{lemma}[{\cite[Lemma 4.1]{Draganic-et-al}}]\label{lem:few-paths}
    Let $G$ be an $n$-vertex $C$-expander for $C > 10^{10}$. Then it contains a spanning linear forest with at most $n^{0.8}$ paths and no isolated vertices.
\end{lemma}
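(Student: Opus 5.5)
This is Lemma 4.1 of \cite{Draganic-et-al}, cited there; here is how I would reprove it. The plan is to start from a maximum-size collection of paths, in the sense of covering as many vertices as possible with a linear forest of few paths. I then use P\'osa rotations together with the expansion to absorb leftover vertices and to merge paths until at most $n^{0.8}$ remain.

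\textbf{Step 1 (matching-type start).} The second expander property (an edge between any two disjoint sets of size $n/2C$) gives a linear forest $\mathcal L$ covering all but fewer than $n/2C$ vertices: greedily extend paths by DFS. A standard DFS argument yields a path forest whose uncovered set $U$ has no edges to a large unexplored part. I would rather use the simpler fact that a maximal matching $M$ in $G$ leaves an independent set. Every $S\subseteq V\setminus V(M)$ with $|S|\le n/2C$ expands by a factor $C$, and the pairs condition forbids $|V\setminus V(M)|\ge n/C$. Isolated vertices in the final forest are avoided because each uncovered vertex has a neighbour.

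\textbf{Step 2 (reducing the number of paths).} Let $\mathcal F$ be a linear forest without isolated vertices minimising the number of paths plus the number of uncovered vertices. Suppose this quantity exceeds $n^{0.8}$. For each path I perform P\'osa rotations with a fixed start, keeping the path's vertex set fixed; since $\mathcal F$ is a forest, the other paths may be rotated simultaneously. The expansion of $C$-expanders, with $C$ large, shows that the set of endpoints reachable by rotations within a path of length $\ell$ has size at least order $\min(\ell, n/C)$. More robustly, I would follow \cite{Draganic-et-al} and use the extendability framework of \Cref{lem:extendable}. Let $E$ be the set of all endpoints attainable across all paths after rotations. By minimality, no edge joins endpoints of two distinct paths (else merge) and no endpoint is adjacent to an uncovered vertex (else extend). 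Having at least $n^{0.8}$ paths yields a large endpoint set. Iterated rotations blow this up to a set of size at least $n/2C$ on each of two "sides", with no edges between them. This contradicts the second expander property.

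\textbf{Main obstacle.} The main difficulty is Step 2: rotations within many paths simultaneously interact, and the rotated endpoint sets of different paths are not independent. The key is a counting argument showing that, with $k\ge n^{0.8}$ paths, the "end sets" grow by factor about $C/4$ per round for $O(\log n/\log C)$ rounds before hitting $n/2C$. The exponent $0.8$ leaves room for the polynomial losses incurred when boundary vertices are shared among paths. Since the statement is exactly \cite[Lemma 4.1]{Draganic-et-al}, in the paper I would simply invoke that result.
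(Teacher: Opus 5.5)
The paper gives no proof of this statement: it imports it verbatim from \cite[Lemma 4.1]{Draganic-et-al}, so your closing sentence is exactly what the paper does. The self-contained reproof you sketch, however, has a genuine gap.

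The easy part is fine once tidied up. A maximal matching leaves an independent set of size less than $n/C$. Hall's condition, which does follow from the expansion, then attaches the unmatched vertices to distinct matched vertices, giving paths of length at most $3$; ``each uncovered vertex has a neighbour'' alone would not suffice. Moreover, a spanning linear forest without isolated vertices having the fewest paths has fewer than $n/C$ paths. Otherwise, take one endpoint from each of $n/C$ paths and split them into two sets of size $n/2C$; the edge between them merges two paths. So all the content of the lemma lies in going from linearly many paths down to $n^{0.8}$, and that is exactly your Step 2.

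Step 2 needs two things your mechanism cannot deliver simultaneously: independence across paths and expansion. Rotations that keep each path's vertex set fixed are independent across paths, but they yield no expansion. Pósa's inclusion $N(R)\subseteq R^-\cup R^+$ needs every neighbour of a reachable endpoint to lie on $P$, which classically comes from maximality of a longest path. Your minimality only forbids endpoint--endpoint edges between distinct paths and endpoint--uncovered edges. An endpoint may therefore have all of its (at least $C$) neighbours in the interiors of other paths. Nothing then bounds the within-path endpoint sets from below (for a path on fewer than $C$ vertices they are trivially small), so the claimed size of order $\min(\ell,n/C)$ is unfounded.

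To use the expansion you must let an endpoint $v$ of $P$ jump to an interior vertex $w$ of another path $Q$, cutting $Q$ at $w$ and making a neighbour of $w$ on $Q$ a new endpoint. A Pósa-type argument then controls the endpoints reachable from one moving end, but the interference you wanted to avoid comes back. Rotations of different paths interact and may create one-vertex paths. An edge between two vertices that are endpoints only in incompatible configurations gives no merge. So the two disjoint ``sides'' of size $n/2C$, with every cross pair mergeable, are never produced.

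Taming this interference is the real work in \cite{Draganic-et-al}; compare \Cref{lem:reduce-endpoint}, which alters only $O(\log n)$ edges and requires $0.1n$ vertices on paths of length between $100$ and $\sqrt n$. Your ``growth by a factor $C/4$ per round'' and ``the exponent $0.8$ leaves room'' restate what has to be proved rather than prove it. \Cref{lem:extendable} does not fill the gap either. It only builds new paths through vertices outside an extendable subgraph $H$ with $|H|\le n-5nD/C-\ell$, and cannot rotate or merge the paths of an almost spanning forest.
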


The next key lemma is to reduce the number of endpoints of a linear forest via rotations.
\begin{lemma}[{\cite[Lemma 3.7]{Draganic-et-al}}]\label{lem:reduce-endpoint}
    Let $C>10^{10}$, let $G$ be an $n$-vertex $C$-expander and let $\mathcal F$ be a spanning linear forest in $G$ with no isolated vertices and such that at least $0.1n$ vertices of $G$ belong to paths in $\mathcal F$ of lengths between $100$ and $\sqrt{n}$. Let also $x,y\in \mathrm{End}(\mathcal F)$. Then, there is a spanning linear forest $\mathcal F'$ in $G$ with no isolated vertices such that $|E(\mathcal F) \triangle E(\mathcal F')| = O(\log n)$ and $\mathrm{End}(\mathcal F') = \mathrm{End}(\mathcal F)\setminus \{x,y\}$.
\end{lemma}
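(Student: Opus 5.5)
The plan is a Pósa rotation–extension argument adapted to linear forests, run with a breadth-first search of depth $O(\log n)$ so that only $O(\log n)$ edges change.

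\textbf{Step 1: rotations in a forest.} Let $x$ be an endpoint of a path $P\in\mathcal F$, and let $xv\in E(G)$. There are two kinds of move, and neither changes the total number of endpoints.
\begin{itemize}
\item If $v$ is an interior vertex of $P$, do a classical Pósa rotation: add $xv$ and delete the edge from $v$ to its neighbour $v^+$ on the side facing $x$. Then $v^+$ becomes an endpoint instead of $x$.
\item If $v$ is an interior vertex of another path $Q$, do a ``cut-and-attach'': add $xv$ and delete one of the two edges of $Q$ at $v$. The freed neighbour of $v$ becomes the new endpoint.
\end{itemize}
To avoid creating isolated vertices, I only use interior vertices $v$ at distance at least $2$ from the ends of their paths. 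Each move changes $O(1)$ edges.

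\textbf{Step 2: expansion of reachable endpoints.} Starting from $x$, and keeping $y$ as an endpoint, let $R_i$ be the set of endpoints obtainable from $x$ by at most $i$ moves. I aim for the Pósa-type inequality $|N_G(R_i)|\le 3|R_{i+1}|+O(|R_i|)$, by the usual argument that each neighbour of $R_i$ yields a new endpoint unless one of its path-neighbours is already in $R_i$ or was touched. With the $C$-expansion, $|N_G(S)|\ge C|S|$ for $|S|\le n/2C$, this gives $|R_{i+1}|\ge (C/10)|R_i|$. Hence after $O(\log n)$ rounds we reach an endpoint set $R_x$ with $|R_x|\ge n/2C$, each element reached by a sequence of $O(\log n)$ moves.

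The hypothesis that $0.1n$ vertices lie on paths of length between $100$ and $\sqrt n$ is used here. It ensures that, at every stage, a constant fraction of $N_G(R_i)$ consists of genuinely interior vertices. It also ensures that a single rotation sequence cannot destroy the structure of a long path: paths have length at most $\sqrt n$, so any one path occupies only a negligible share of $N_G(R_i)$ until $|R_i|$ is large. I therefore insist that at most one move touches any given path in a round.

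\textbf{Step 3: closing.} For each $x'\in R_x$, fix its forest $\mathcal F_{x'}$ and grow, in the same way, a set $R_{y}(x')$ of at least $n/2C$ endpoints reachable from $y$. The standard double-counting trick is to use, instead, a common ``rotation tree'' that avoids the $O(\log n)$ edges changed for $x'$. By the second expander property there is an edge between $R_x$ and the corresponding $y$-side set. If the two endpoints it joins lie on different paths, adding it merges them. The result is a linear forest whose endpoint set is $\mathrm{End}(\mathcal F)\setminus\{x,y\}$, and it differs from $\mathcal F$ in $O(\log n)$ edges.

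If the two endpoints lie on the same path, adding the edge closes a cycle. In that case I use a vertex of the cycle adjacent (by expansion) to an interior vertex of another path to open the cycle into that path. This keeps the endpoint count reduced by exactly $2$.

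\textbf{Main obstacle.} The delicate part is Step 2 together with the independence required in Step 3. The rotations from $y$ must not interfere with the changes already made for $x$. Preventing this is exactly where bounded path lengths and the $O(\log n)$ bound on touched edges enter: I would mark the $O(\log n)$ edges changed for $x$ as forbidden and show that this deletes only $O(\log n)$ vertices from every neighbourhood, which is absorbed into the expansion factor $C$.
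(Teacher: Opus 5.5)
The paper does not prove this lemma; it is imported from Dragani\'c et al. So I am judging your sketch on its own terms. It has a genuine gap at its core, in Step 3. The joinedness property gives an edge between two \emph{fixed} sets of size $n/2C$. Applied to $R_x$ and $R_y(x')$, it gives an edge $x''y''$ with, in general, $x''\neq x'$, and $y''$ is known to be reachable only in $\mathcal F_{x'}$, not in $\mathcal F_{x''}$.

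To use a single $y$-side set instead, you would have to grow it disjointly from the whole $x$-tree (in lockstep, to keep small-set expansion). Even then, separately valid sequences need not combine. Take $P=a\cdots z^-zz^+\cdots x$ and $P'=b\cdots w^-ww^+\cdots y$. The moves ``add $xw$, delete $ww^-$'' and ``add $yz$, delete $zz^-$'' are vertex-disjoint, and each gives a linear forest. Together they create the cycle $z\,z^+\cdots x\,w\,w^+\cdots y\,z$, even though the new endpoints $w^-,z^-$ lie on different paths.

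What is missing is a construction of sets $R_x,R_y$ of size at least $n/2C$ such that every pair $(x',y')\in R_x\times R_y$ can be realised simultaneously, with no cycle anywhere and with $x',y'$ on different paths. This is the part where the hypothesis on paths of length between $100$ and $\sqrt n$ has to be used, and your sketch does not use it there.

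Your repair of the cycle case is also wrong. Deleting a cycle edge $zz'$, adding $zw$ and deleting an edge $ww^+$ of $Q$ creates the new endpoints $z'$ and $w^+$. So the endpoint count returns to its value before $x'y'$ was added, and the endpoint set is not $\mathrm{End}(\mathcal F)\setminus\{x,y\}$. Attaching to an endpoint of $Q$ instead removes an endpoint that must stay.

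Step 2 also fails as written. For $|R_i|<n/2C$, expansion says nothing about \emph{where} $N_G(R_i)$ lies, and the hypothesis controls only $0.1n$ vertices. The other $0.9n$ vertices may be covered by single edges that contain all of $N_G(x)$. Your rule of pivoting only at vertices at distance at least $2$ from the ends of their path then leaves you stuck at $R_1=\{x\}$. Similarly, the rule ``at most one move per path per round'' bounds the growth by the number of distinct paths met, and a single path of length up to $\sqrt n$ can contain all of $N_G(R_i)$.

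The Pos\'a count $|N_G(R_i)|\le 2|R_{i+1}|+O(|R_i|)$ needs every $v\in N_G(R_i)$ with neither $v$ nor its $\mathcal F$-neighbours in $R_i$ to be a usable pivot. Concretely:
\begin{itemize}
\item if $v$ lies on the moving endpoint's path, rotate on the side facing the moving endpoint;
\item otherwise, use either side, including when $v$ is an endpoint of another path of length at least $2$;
\item handle single-edge paths separately, for example by allowing the moving endpoint to be temporarily isolated.
\end{itemize}
Done this way, reaching $|R_x|\ge n/2C$ in $O(\log n)$ rounds needs no assumption on path lengths.
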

\begin{proof}[Proof of \Cref{lem:paths-in-remaining}]
Let $\mathcal F_0$ be the linear forest that combines the forest obtained by applying \Cref{lem:few-paths} to $G[Z]$ and the paths connecting $A$ and $B$ in \Cref{lem:structure-decomp-new}. 
Note that $\mathcal F_0$ is spanning in $V(G)\setminus (X\setminus (A\cup B))$, contains no isolated vertex and has $|Z|^{0.8} + n^{0.9}\leq 2n^{0.9}$ paths. Moreover, at least $n/5$ vertices are on paths of length $n^{0.1}/5$. 
Recall that $P_0\in \mathcal{F}_0$ is the path containing all edges in $E_0$.

Let $\widetilde{G}=G[Z \cup (Y \setminus V(P_0))\cup A\cup B]$ and let $\mathcal{F}_1=\mathcal{F}_0\setminus\{P_0\}$. 
Note that $\mathcal{F}_1$ is a spanning linear forest in $\widetilde{G}$ with no isolated vertices.
By \Cref{lem:structure-decomp-new}, $\widetilde{G}$ is a $C/300$-expander with $n' \geq n-n^{0.1} - n/100 \geq \frac{49n}{50}$ vertices.
In addition, as $V(P_0) \le n^{0.1}$, at least $n/5 - n^{0.1} \ge n/6$ vertices of $\widetilde{G}$ belong to paths in $\mathcal{F}_1$ of lengths between $100$ and $\sqrt{|V(\widetilde{G})|}$.
We now iteratively apply \Cref{lem:reduce-endpoint} to $\widetilde{G}$ and $\mathcal{F}_1$.
Suppose that after $i$ applications of \Cref{lem:reduce-endpoint}, we obtain a linear forest $\mathcal{F}_{i}$.
If $|\mathrm{End}(\mathcal{F}_i)|\ge |A\cup B|-1$, then since $|\mathrm{End}(\mathcal{F}_i)|$ is even, there are at least two endpoints outside of $A\cup B$, call them $x$ and $y$.
By applying \Cref{lem:reduce-endpoint} with $x,y$ and $\mathcal{F}:=\mathcal{F}_i$ we obtain a linear forest $\mathcal{F}_{i+1}$ such that $\mathrm{End}(\mathcal{F}_{i+1})=\mathrm{End}(\mathcal{F}_i\setminus\{x,y\})$ and $|E(\mathcal{F}_{i+1}\Delta\mathcal{F}_i)|=O(\log n)$.
Otherwise, we have $\mathrm{End}(\mathcal{F}_i)=A\cup B\setminus V(P_0)$ and we finish the process.
If the process stops after $k$ steps, then we set $\mathcal{F}^\prime=\mathcal{F}_k \cup \{P_0\}$, then $\mathcal{F}^\prime$ is the desired linear forest.
Therefore, it remains to show that one can apply \Cref{lem:reduce-endpoint} for each step in the process.

As $|\text{End}(\mathcal{F}_1)\setminus(A\cup B)|\le 2n^{0.8}$, we can have at most $2n^{0.8}$ steps in this process.
Thus, $|E(\mathcal{F}_{i}\Delta\mathcal{F}_1)|=O(n^{0.8}\log n)$ for each $i$.
This implies that $\mathcal{F}_i$ still contains at least $n^{0.9}-1-O(n^{0.8}\log n)\ge n^{0.9}/2$ paths that in $\mathcal{F}_1$ and connecting $A$ and $B$.
By the construction of \Cref{lem:structure-decomp-new}, each such path has length $n^{0.1}/5$.
Thus, at least $n/10 \geq |V(\widetilde{G})|/10$ vertices of $\widetilde{G}$ belong to paths in $\mathcal{F}_i$ of lengths between $100$ and $\sqrt{|V(\widetilde{G})|}$. 
Therefore, we can apply \Cref{lem:reduce-endpoint} to $\widetilde{G}$ and $\mathcal{F}_i$ at each step, which completes the proof.
\end{proof}

\subsection{Connecting with the linking structure}
We are now ready to finish the proof of \Cref{thm:robust-Hamiltonicity}.
\begin{proof}[Proof of \Cref{thm:robust-Hamiltonicity}]
    We first apply \Cref{lem:structure-decomp-new} to obtain the partition $V(G') = X\cup Y\cup Z$ and the sets $A,B\subseteq X$.
    Next, we apply \Cref{lem:paths-in-remaining} to obtain a spanning linear forest $\mathcal F_0$ of $G' \setminus (X \setminus (A \cup B))$ that containing all edges in $E_0$ and $\mathrm{End}(\mathcal F_0) = A \cup B$.
    Let $P_1, \ldots, P_m$ be the enumeration of paths in $\mathcal F_0$.
    Finally, as $G'[X]$ is an $(A,B)$-linking structure, for the bijection $\phi: A \to B$ defined by mapping $\mathrm{End}(P_i) \cap A$ to $\mathrm{End}(P_{i+1}) \cap B$ for $i \in [m]$ (with $P_{m+1} = P_1$), there exists a set of vertex-disjoint paths $Q_1, \ldots, Q_m$ in $G'[X]$ such that for each $i \in [m]$, $Q_i$ is a path from $\mathrm{End}(P_i) \cap A$ to $\mathrm{End}(P_{i+1}) \cap B$ and the union of all paths covers $X$.
    Therefore, the union of all paths $P_i$ and $Q_i$ forms a Hamilton cycle in $G'$ containing all edges in $E_0$.
\end{proof}

\section{$k$ edge-disjoint Hamilton cycles}\label{subsec:k-edge-disjoint-HC}
The goal of this section is to prove~\Cref{thm:k-edge-disjoint-HC}.
As many computations in this section are quite similar to those in~\Cref{subsec:hitting-time-properties}, we will omit some repeated calculations.

Let $C\ge\exp(10^{15})$ be a constant, $\hat{d}=\min\{d,10\log n\}$, $k\le\hat{d}/(10^{10}C^2)$, and $\varepsilon=n^{-1/\hat{d}}$. 
These parameters are fixed in the rest of the section.
For each $0\le i<k$, denote by $G^{(i)}$ the graph that is obtained from $G_{\tau_{2k}}$ by removing the edges of $i$ edge-disjoint Hamilton cycles. 
Let ${\rm\text{SMALL}}(G^{(i)})=\{v\in V(G^{(i)}):{\rm\text{deg}}_{G^{(i)}}(v)\le\hat{d}/10^6\}$ and ${\rm\text{LARGE}}(G^{(i)})=V(G^{(i)})\setminus{\rm\text{SMALL}}(G^{(i)})$.

Our goal is to establish that the hitting time for a minimum degree of $2k$ coincides with the hitting time for the existence of $k$ edge-disjoint Hamilton cycles whp. 
To achieve this, it suffices to prove that for any $0 \le i < k$, $G^{(i)}$ maintains the following properties:
\begin{enumerate}[label=(H\arabic*),itemsep = 3pt]
    \item $\Delta(G^{(i)})\le \hat{d}$;\label{H1}
    \item $|{\rm\text{SMALL}}(G^{(i)})|\le n^{0.11}$;\label{H2}
    \item for any $u,v\in {\rm\text{SMALL}}(G^{(i)})$, we have $d_{G^{(i)}}(u,v)\geq 5$;\label{H3}
    \item for any vertex set $S\subseteq{\rm\text{LARGE}}(G^{(i)})$ of size at most $n/(2C_1)$, $|N_{G^{(i)}}(S)|\ge C_1|S|$;\label{H4} 
    \item for any two disjoint vertex sets $S,T\subseteq V(G^{(i)})$ of size at least $n/(2C_1)$, $e_{G^{(i)}}(S,T)\ge 1$.\label{H5}
\end{enumerate}
Through the section, we will make frequent use of the estimates in~\Cref{subsec:estimates}. 
As before, we start by estimating $e(G_{\tau_{2k}})$:
\begin{proposition}\label{prop:bound-hitting-time-for-k}
Let $m = e(G_{\tau_{2k}})$, then whp
\begin{itemize}
    \item if $d\le 10\log n$, $(1-\varepsilon)nd/2\le m\le nd/2$;\label{D1}
    \item if $d\ge 10\log n$, $n\log n/2\le m \le (1+10^{-10^{10}})n\log n/2$.\label{D2}
\end{itemize}
\end{proposition}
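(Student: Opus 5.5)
The plan is to sandwich $\tau_{2k}$ between two $p$-subgraphs (or $m$-subgraphs) of $G$, as in \Cref{prop:bound-hitting-time-sparse} and \Cref{prop:bound-hitting-time-new}. We then transfer the result to the process using monotonicity of the property $\mathcal D_{2k}$, \Cref{lem:asymptotic-equiv}, and the natural coupling $G_{m'}\subseteq G_{\tau_{2k}}\subseteq G_{m''}$ used in the proof of \Cref{lem:properties-dense}.

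\emph{Sparse case $d\le 10\log n$.} Here $\hat d=d$ and $\varepsilon=n^{-1/d}$. The upper bound $m\le nd/2=e(G)$ is trivial. For the lower bound, $\mathcal D_{2k}\subseteq\mathcal D_2$ since $k\ge 1$, so $\tau_{2k}\ge\tau_2$ deterministically. \Cref{prop:bound-hitting-time-sparse} (which applies since $d=O(\log n)$) gives $\tau_2\ge(1-n^{-1/d})e(G)$ whp. Hence $m\ge(1-\varepsilon)nd/2$ whp.

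\emph{Dense case, upper bound.} Put $p''=(1+\delta)\log n/d$ with $\delta=10^{-10^{10}}$ and $m''=p''nd/2=(1+\delta)n\log n/2$. It suffices to show that whp $G_{p''}$ has no vertex of degree $<2k$; then \Cref{lem:asymptotic-equiv} and monotonicity give $\tau_{2k}\le m''$. By a first moment bound, using estimate \eqref{(1)} and $(1-p'')^{d-2k}\le \exp(-(1+\delta-o(1))\log n)$, the expected number of such vertices is at most
\[
n\cdot 2k\binom{d}{2k}(p'')^{2k}(1-p'')^{d-2k}\le 2k\,n^{-\delta+o(1)}\Big(\frac{e(1+\delta)\log n}{2k}\Big)^{2k}.
\]
The function $x\mapsto 2x\log(e(1+\delta)\log n/(2x))$ is increasing for $x\le \log n/2$. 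So at $k\le \hat d/(10^{10}C^2)\le c\log n$, with $c=10^{-9}C^{-2}$, the last factor is at most $n^{2c\log(e/c)}$. Because $C\ge\exp(10^{15})$, we have $2c\log(e/c)\ll\delta$, and the expectation is $o(1)$. Markov's inequality then finishes this part.

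\emph{Dense case, lower bound: the main obstacle.} Put $p'=\log n/d$ and $m'=n\log n/2$. I need that whp $G_{p'}$ still has a vertex of degree $\le 2k-1$, so that $\tau_{2k}\ge m'$. I would run the second-moment argument of \Cref{prop:bound-hitting-time-dense}~\ref{A2} with $X$ the number of vertices of degree $\le 2k-1$. The covariances again come only from adjacent pairs and are bounded as there, so $\Var[X]=o(\E[X]^2)$ once $\E[X]\to\infty$.

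The delicate point is $\E[X]\to\infty$. We have
\[
\E[X]\ge n\binom{d}{2k-1}p'^{2k-1}(1-p')^{d}\approx \frac{(\log n)^{2k-1}}{(2k-1)!}\,e^{-(1+o(1))\log^2 n/(2d)}.
\]
So one must check that the polynomial gain $(\log n)^{2k-1}/(2k-1)!$ beats the loss $e^{-\log^2 n/(2d)}$ coming from $1-p'<e^{-p'}$. For $d\ge\log^2 n$ this loss is $O(1)$ and the claim is immediate. For $10\log n\le d\ll\log^2 n$ the loss is a positive power of $n$, and this is exactly where I expect difficulty. For instance, with $k=1$ and $d=10\log n$, the threshold $p_1$ of \Cref{prop:bound-hitting-time-new} is only $\approx 0.95\log n/d$.

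So I would first carry out this comparison carefully. I would split by the size of $d$ relative to $\log^2 n/(k\log\log n)$, and verify for which $k$ the bound $(2k-1)\log\log n-\log((2k-1)!)\ge \log^2 n/(2d)$ holds. If it fails in part of the range, the required lower bound on $m$ in the dense case cannot be obtained by this route. In that case I would check whether the later use of \Cref{prop:bound-hitting-time-for-k} only needs $p\ge p_1$, which is available via $\tau_{2k}\ge\tau_2$ and \Cref{prop:bound-hitting-time-new}.
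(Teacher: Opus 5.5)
Your sparse case and your dense-case upper bound are correct and follow the paper's route. For the sparse case you use $\tau_{2k}\ge\tau_2$ together with \Cref{prop:bound-hitting-time-sparse}. For the dense upper bound you count low-degree vertices by a first moment at $m=(1+10^{-10^{10}})n\log n/2$. The paper does this directly in $G_m$ with the $k$-independent threshold $\log n/10^{10^{11}}\ge 2k$; your version in $G_{p''}$ with threshold $2k$ is equivalent. One small slip: $(1-p'')^{d-2k}$ is $n^{-(1+\delta)+O(c)}$, not $n^{-(1+\delta)+o(1)}$, because $2k/d\le c/5$ is a constant. Since $c\ll\delta$, this is absorbed just like your binomial factor.

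For the dense lower bound you stop at a conditional. Your suspicion is right, and you should settle it: the bound $m\ge n\log n/2$ is false near $d=10\log n$. Take $k=1$, $d=\lceil 10\log n\rceil$ and $p=\log n/d\le 1/10$. Using $1-p\le e^{-p-p^2/2}$ and $dp/(1-p)\le 2\log n$,
\[
n\bigl((1-p)^d+dp(1-p)^{d-1}\bigr)\le (1+2\log n)\,n e^{-dp-dp^2/2}=(1+2\log n)\,e^{-\log^2 n/(2d)}\le (1+2\log n)\,n^{-1/21}\to 0 .
\]
The same computation with $p=0.99\log n/d$ still gives $O(n^{-0.03}\log n)$. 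So by \Cref{prop:bound-hitting-time-dense}~\ref{A1} and \Cref{lem:asymptotic-equiv}, whp $\tau_2\le 0.99\,n\log n/2$. Since the factor $(e\log n/(2k))^{2k}$ is at most $n^{2c\log(e/c)}$, as in your upper-bound computation, the same happens for every admissible $k$. Hence no argument can yield the stated lower bound in this range.

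The paper's proof of this part cites exactly the ingredients of your fallback, namely \Cref{prop:bound-hitting-time-new} with \Cref{lem:asymptotic-equiv}. These give only $m\ge\tau_2\ge(1-e^{-\log n/d})nd/2$, which is at least $0.95\,n\log n/2$ by \Cref{prop:approximation}. So the paper does not establish $n\log n/2$ either; the proposition is misstated. The weaker bound is also all that is used later. Since $(1-e^{-x})/x$ is decreasing, we have $(1-e^{-\log n/d})nd/2\ge(1-\varepsilon)n\hat d/2$ with $\varepsilon=n^{-1/\hat d}$. This is precisely the lower bound invoked in \Cref{lem:logn_expansion_property,lem:few-edges-inside,lem:edge-expansion}. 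So adopt your fallback unconditionally. With $n\log n/2$ replaced by $(1-e^{-\log n/d})nd/2$ in the dense case, your argument proves the corrected statement completely.
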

\begin{proof}
The former directly follows from~\cref{prop:bound-hitting-time-sparse}. Indeed, deterministically $e(G)\geq e(G_{\tau_{2k}}) \geq e(G_{\tau_2})$.
As for the latter, the lower bound follows by combining~\Cref{prop:bound-hitting-time-new} with~\Cref{lem:asymptotic-equiv}.
For the upper bound, it suffices to show that whp the minimum degree is at least $\log n/(10^{10^{11}})$ if $m=(1+10^{-10^{10}})n\log n/2$. By a direct computation, the probability that there exists a vertex with degree at most $\log n/(10^{10^{11}})$ is at most 
\[
\begin{aligned}
n\cdot \sum_{j=0}^{\log n/(10^{10^{11}})}\frac{\binom{d}{j}\binom{nd/2-d}{m-j}}{\binom{nd/2}{m}}&\overset{(\cref{(1)}),(\cref{(6)})}
{\le} 2n\left(\frac{ed}{\log  n/(10^{10^{11}})}\right)^{\frac{\log n}{10^{10^{11}}}}\frac{\binom{nd/2-d}{m-\log n/(10^{10^{11}})}}{\binom{nd/2}{m}}\\
&\overset{(\cref{(5)})}{\le} 2n(10^{10^{11}}e)^{\frac{\log n}{10^{10^{11}}}}\exp\left(-(1+2\cdot10^{10^{10}})\log n\right)=o(1). \qedhere
\end{aligned}
\]
\end{proof}

Previously, for verifying the joinedness property, we needed only one edge between two large disjoint sets, even though there were many edges in between. For reasons that will be clear, we compute the number of edges between two large sets more accurately.
\begin{lemma}\label{lem:logn_expansion_property} 
Assume $G$ is an $(n,d,\lambda)$-graph with $\lambda\le d/C$. 
Whp $e_{G_{\tau_{2k}}}(S,T)\ge \hat{d}|S|/C$ for any disjoint pair $S,T\subseteq V(G)$ of size $n/(C^{\prime})$, where $C^\prime= C^{1/3}$ and $k\le 10^{-10}\hat{d}/C^2$.
\end{lemma}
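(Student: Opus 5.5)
Since $G_{\tau_{2k}}\supseteq G_{\tau_2}$ and the property ``$e(S,T)\ge \hat d|S|/C$ for all such pairs'' is increasing, it suffices to prove it for a random subgraph that whp sits below $G_{\tau_{2k}}$. Concretely, in the sparse case ($d\le 10\log n$) we work in $G_{m}$ with $m=(1-\varepsilon)nd/2$, $\varepsilon=n^{-1/d}$. In the dense case we work in $G_{p_1}$ with $p_1=1-e^{-\log n/d}$, and transfer to $G_{m_1}$ via \Cref{lem:asymptotic-equiv}. By \Cref{prop:bound-hitting-time-for-k} and the natural coupling, whp $G_{m}\subseteq G_{\tau_{2k}}$ (resp.\ $G_{m_1}\subseteq G_{\tau_{2k}}$). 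We also fix $|S|=|T|=s:=n/C'$.

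The first step is a lower bound on $e_G(S,T)$. By \Cref{lem:expander-mixing},
\[
e_G(S,T)\ge \frac{d s^2}{n}-\lambda s\ge s\Big(\frac{d}{C'}-\frac{d}{C}\Big)\ge \frac{ds}{2C^{1/3}} .
\]
So $G$ has $M:=ds/(2C^{1/3})$ edges between $S$ and $T$, and we need the random subgraph to retain at least $\hat d s/C$ of them. In the dense case the retained count dominates $\Bin(M,p_1)$ with mean $Mp_1\ge 0.95\, s\log n/(2C^{1/3})$ by \Cref{prop:approximation}. This mean is larger than the target $\hat d s/C=10 s\log n/C$ by a factor of order $C^{2/3}$. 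A Chernoff lower tail, or directly $\sum_{j<t}\binom{M}{j}p_1^j(1-p_1)^{M-j}\le t\binom{M}{t}p_1^t e^{-p_1(M-t)}$ in the spirit of \Cref{prop:large-deviation}, then gives failure probability $\exp(-\Omega(s\log n/C^{1/3}))=\exp(-\Omega(n\log n/C^{2/3}))$.

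The second step is the union bound. There are at most $\binom{n}{s}^2\le (eC')^{2n/C'}=\exp(O(n\log C/C^{1/3}))$ pairs $(S,T)$, and this is beaten by $\exp(-\Omega(n\log n/C^{2/3}))$ since $n$ is large compared with $C$.

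The sparse case is the part I expect to be delicate. Here $G_m$ is a uniform $m$-edge subgraph missing only $\varepsilon nd/2$ edges, so the bad event is that at least $M-\hat d s/C\ge M/2$ of the $M$ edges between $S$ and $T$ lie among the $\varepsilon nd/2$ deleted edges. Equivalently, the deleted set is a uniform $(\varepsilon nd/2)$-subset. The probability that it contains a fixed set of $M/2$ edges from the $M$ is at most $\binom{M}{M/2}\big(\varepsilon nd/2\big/ (nd/2)\big)^{M/2}(1+o(1))\le (4\varepsilon)^{M/2}$. Since $\varepsilon^{d}=1/n$, this is $\exp(-\Omega(s\log n/C^{1/3}))$, using $M/2\ge ds/(4C^{1/3})$. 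The union bound is then the same as in the dense case. When $\varepsilon$ is not small, i.e.\ $d=O(1)$, the bound $\varepsilon^{M/2}=n^{-M/(2d)}=n^{-\Omega(s/C^{1/3})}$ still suffices, so no separate case is needed. The only real care is the hypergeometric estimate, which follows the computation in \ref{P3}. Finally, the condition on $k$ enters only through $m\le e(G_{\tau_{2k}})$, which \Cref{prop:bound-hitting-time-for-k} supplies.
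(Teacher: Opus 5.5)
There is a genuine gap in your sparse case. Your coupling reduction, the mixing-lemma bound $e_G(S,T)\ge M:=ds/(2C^{1/3})$, and the dense-case lower-tail argument are all fine.

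\textbf{The gap.} You have the behaviour of $\varepsilon=n^{-1/d}=e^{-\log n/d}$ backwards. It tends to $0$ when $d\ll\log n$, in particular for $d=O(1)$. For $d=\Theta(\log n)$ it is a constant as large as $e^{-1/10}\approx 0.9$. That range lies inside the sparse case $d\le 10\log n$ and is compatible with $k\ge 1$. In fact $1-\varepsilon$ is exactly the dense-case $p_1$, so $G_m$ then contains only about a $p_1$-fraction of $E(G)$, not ``all but a few'' edges.

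Consequently your bound $\binom{M}{M/2}\varepsilon^{M/2}\le(4\varepsilon)^{M/2}$ is $\ge 1$ as soon as $d\ge \log n/\log 4$. The step ``since $\varepsilon^d=1/n$ this is $\exp(-\Omega(s\log n/C^{1/3}))$'' silently drops the factor $2^{M}=\exp(\Theta(ds/C^{1/3}))$. That factor has the same order as $n^{M/(2d)}=\exp(s\log n/(4C^{1/3}))$ exactly when $d=\Theta(\log n)$.

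Worse, for $d>\log n/\log 2$ one has $\varepsilon>1/2$. Then the event ``at least $M/2$ of the $S$--$T$ edges are deleted'' is the \emph{typical} event. So relaxing ``fewer than $\hat d s/C$ survive'' to ``at least half are deleted'' loses too much in this regime, whatever estimate you use afterwards.

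\textbf{The fix.} Keep the true threshold. Set $t:=\hat d s/C=ds/C$. Failure means all but at most $t$ edges of a fixed $M$-set of $S$--$T$ edges are deleted. A union bound over the surviving set gives probability at most
\[
\binom{M}{t}\varepsilon^{M-t}\le (eC^{2/3})^{ds/C}\,n^{-s/(4C^{1/3})},
\]
using $M-t\ge M/2$ and $\varepsilon^d=1/n$. Since $d\le 10\log n$, the first factor is $\exp(O(s\log n\log C/C))$. This is swamped by the second factor uniformly in $d$, and your union bound over pairs then goes through unchanged.

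Alternatively, since $1-\varepsilon=p_1\ge 1-e^{-1/10}$, the same (hypergeometric) lower-tail Chernoff bound as in your dense case works. It gives $\exp(-\Omega(dn/C^{2/3}))$, which beats $4^n$ because $k\ge 1$ forces $d\ge 10^{10}C^2$.

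This is essentially what the paper does. It bounds the hypergeometric lower tail directly at level $\hat d|S|/C$ using the binomial estimates of \Cref{subsec:estimates}, which handles all $d$ at once rather than passing through the ``half deleted'' event.
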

\begin{proof}
Let $\varepsilon=n^{-1/d}$. Fix two disjoint subsets $S,T\subseteq V(G)$ of size $s=n/C^\prime$. As $\lambda\le d/C$ and $C^{\prime}=C^{1/3}$, applying \cref{lem:expander-mixing}, we have $e_G(S,T)=(1\pm 1/C^{2/3})ds/{C^\prime}$. Thus,
$$
\begin{aligned}
\mathbb{P}&\left(e_{G_{\tau_{2k}}}(S,T)< \hat{d}s/C\right)\le\sum_{j=0}^{\hat{d}|S|/C}\frac{\binom{e_G(S,T)}{j}\binom{nd/2-e_G(S,T)}{m-j}}{\binom{nd/2}{m}}\overset{(\cref{(6)})}{\le} \frac{2\binom{e_G(S,T)}{\hat{d}|S|/C}\binom{nd/2-e_G(S,T)}{m-\hat{d}|S|/C}}{\binom{nd/2}{m}}\\
&\overset{(\cref{(1)}),(\cref{(5)})}{\le}2\left(\frac{e\cdot e_G(S,T)}{\hat{d}|S|/C}\right)^{\hat{d}|S|/C}\left(\frac{m}{nd/2}\right)^{\hat{d}|S|/C}\cdot\exp\left(-\frac{m\cdot 
e_G(S,T)}{10nd}\right)\le\exp\left(-\frac{n\hat{d}}{C}\right)\,,
\end{aligned}
$$
where $s\ge n/(2C^{2/3})$. 
The last inequality holds because $(1-\varepsilon)n\hat{d}/2\le m\le 1.1n\hat{d}/2$ whp.
Thus, we conclude  
$$
\mathbb{P}\left(\exists S,T\subseteq V(G)\text{ s.t. } |S|=|T|=n/(2C^\prime) : e_{G_{\tau_{2k}}}(S,T)< \hat{d}|S|/C\right)\le 2^{2n}\cdot\exp\left(-\frac{n\hat{d}}{C}\right)=o(1).\qedhere
$$
\end{proof}

We now prove an analog of~\Cref{prop:locally-sparse} that works for all $d$.
\begin{lemma}\label{lem:few-edges-inside}
Let $C\geq \exp(10^{15})$, $C_1 = \log C/1000$. 
Assume $G$ is an $(n,d,\lambda)$-graph with $\lambda\le d/C$. 
Then, for $S\subseteq V(G)$ with $|S|\leq n/C^{1/8}$ and $k\le 10^{-10}\hat{d}/C^2$, we have $e_{G_{\tau_{2k}}}(S)< \hat{d}|S|/(11^6C_1)$.
\end{lemma}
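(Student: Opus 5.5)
This is a first-moment union bound over all small sets $S$, following \Cref{prop:locally-sparse}, but carried out in the uniform model $G_m$ with $m=e(G_{\tau_{2k}})$. By \Cref{prop:bound-hitting-time-for-k}, whp $m\le nd/2$ when $d\le 10\log n$, and $m\le (1+10^{-10^{10}})n\log n/2$ when $d\ge 10\log n$. In both cases the edge density $q\coloneqq m/(nd/2)$ satisfies $q\le 1.1\hat d/d$ in the dense case and $q\le 1$ in the sparse case. The event ``$e(S)\ge t$ for some small $S$'' is increasing, so it suffices to bound its probability in $G_{m^+}$, with $m^+$ the upper bound above. If convenient, I would pass to $G_p$ with $p=m^+/(nd/2)$ via \Cref{lem:asymptotic-equiv} (in the dense regime $p\le 1/2$). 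In the sparse regime I would compute directly with hypergeometric ratios, exactly as in the proof of \Cref{lem:logn_expansion_property}, using estimate~(5).

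First, fix $S$ with $|S|=s\le n/C^{1/8}$. By \Cref{lem:expander-mixing},
\[
e_G(S)\le \tfrac12\bigl(ds^2/n+\lambda s\bigr)\le \tfrac12\bigl(C^{-1/8}+C^{-1}\bigr)ds\le ds/C^{1/8}.
\]
Set $t=\hat d s/(11^6C_1)$. By \Cref{prop:large-deviation} (or its hypergeometric analogue via estimates~(1) and~(5)),
\[
\Pr(e_{G_m}(S)\ge t)\le \binom{ds/C^{1/8}}{t}q^{t}\le \Bigl(\frac{e\,ds\,q}{C^{1/8}t}\Bigr)^{t}\le \Bigl(\frac{1.1e\cdot 11^6C_1}{C^{1/8}}\Bigr)^{t}.
\]
Here I used $dq\le 1.1\hat d$; in the sparse case $q\le 1$ and $d=\hat d$. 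Since $C_1=\log C/1000$ and $C\ge \exp(10^{15})$, the base is at most $C^{-1/10}$. The bound is therefore at most $\exp\bigl(-\tfrac{\log C}{10}\cdot \tfrac{\hat d s}{11^6C_1}\bigr)=\exp(-100\hat d s/11^6)$.

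Next, take a union bound over $S$, with $\binom{n}{s}\le (en/s)^s$ choices. For the sum to be $o(1)$ I need roughly $\hat d s\cdot 100/11^6 \gg s\log(en/s)$. This is the main obstacle. When $\hat d=10\log n$ it fails as stated, since $1000/11^6<1$, so the constant $11^6$ seems to need compensation from a larger power of $C$. The fix is to use the slack in the base more carefully: the base is really $\approx 11^6C_1 e/C^{1/8}$, so the exponent is $\tfrac{\log C}{8}\cdot t\approx \hat d s\,\tfrac{\log C}{8\cdot 11^6 C_1}=\hat d s\cdot\tfrac{125}{11^6}$. That is still a small constant times $\hat d s$. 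I would therefore split by size. For $s\le n/\hat d^{\,2}$ (or more generally $s$ tiny), note that $t\ge s+1$ would need many edges inside a tiny set, and the count $\binom{n}{s}$ should be compared against $(es\,q/\cdots)$ using the sharper bound $e_G(S)\le \binom{s}{2}$ together with $e_G(S)\le ds/C^{1/8}$. This gives a base $\bigl(e s^2 q/t\bigr)^t$, which beats $(en/s)^s$ once $t\gg s$, since $\hat d/C_1$ is large when $\hat d\ge 10^{10}C^2k$ and $k\ge1$. For large $s$, $\log(en/s)\le \log(eC^{1/8})$ is bounded by a constant, while $\hat d$ is large, since $\hat d\ge 10^{10}C^2$ is forced by $k\ge 1$. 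Hence $\hat d s\cdot 125/11^6$ dominates $s\log(en/s)$.

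Finally, I would sum over $s$. The crossover between the two regimes is where I expect to fiddle with constants, choosing the split point $s_0=n/\hat d^{\,c}$ so that both estimates give $\sum_s e^{-\Omega(s)}\cdot n^{-\Omega(1)}=o(1)$.
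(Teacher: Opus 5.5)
\textbf{There is a genuine gap: your patch for small and medium $s$ does not cover all set sizes.}

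Your diagnosis is correct. A union bound over all $\binom{n}{s}$ vertex sets cannot close with these constants. In the dense case $\hat d=10\log n$, the single-set bound is $\bigl(O(C_1)C^{-1/8}\bigr)^{t}$ with $t=\hat d s/(11^6C_1)$. The factor $\log C$ cancels, leaving only about $n^{-1250s/11^6}\approx n^{-7\cdot 10^{-4}s}$ against $(en/s)^s$. The paper's written proof is exactly this union bound, after first handling $d\le C^{1/8}\log n/(2\cdot 11^6C_1)$ deterministically. Its final claim that $\sum_s (en/s)^s(e\,11^6C_1\alpha)^{\hat ds/(11^6C_1)}=o(1)$ overlooks precisely this cancellation, so you will not find the repair there.

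However, your patch does not close the gap.
\begin{itemize}
\item The bound $e_G(S)\le\binom{s}{2}$ gives a base $esq/(2\beta)$ with $\beta=\hat d/(11^6C_1)$. In the dense case this is about $10^5C_1 s/d$, which is below $1$ only for $s\lesssim d/(10^5C_1)$. The largeness of $t/s$ is irrelevant.
\item The crude bound needs $\log(en/s)<1250\log n/11^6$, i.e.\ $s\gtrsim n^{1-7\cdot 10^{-4}}$. Your inequality $\log(en/s)\le\log(eC^{1/8})$ goes the wrong way, since $s\le n/C^{1/8}$.
\item So if, say, $d\le n^{1-10^{-3}}$, then at $s=d$ neither estimate gives anything, for any choice of split point $s_0$.
\end{itemize}
Separately, the sparse case needs no probability at all. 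When $\hat d=d$, and more generally whenever $d\le C^{1/8}\log n/(2\cdot 11^6C_1)$, you have deterministically
\[
e_{G_{\tau_{2k}}}(S)\le e_G(S)\le 2ds/C^{1/8}<\hat ds/(11^6C_1).
\]
This is how the paper begins. Your probabilistic split has the same middle-range hole there.

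\textbf{The missing idea: union bound over connected structures (trees) rather than vertex sets.}
\begin{itemize}
\item Reduce to connected pieces. If $e(S)\ge\beta|S|$ in $G_{\tau_{2k}}$, then some component $S'$ of $G_{\tau_{2k}}[S]$ has $e(S')\ge\beta|S'|$. Hence $|S'|>2\beta\to\infty$, and $e_G(S')\le 2d|S'|/C^{1/8}$ still holds.
\item Count pairs $(T,F)$. Here $T$ is a tree of $G$ on $s$ vertices, with at most $n(ed)^{s-1}$ choices. $F$ is a set of $r=\lceil\beta s\rceil-s+1$ further edges of $G[V(T)]$, with at most $\binom{2ds/C^{1/8}}{r}$ choices. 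Any fixed set of $j$ edges lies in $G_m$ with probability at most $q^j$.
\item Bound the expectation. The expected number of such pairs present in $G_m$ is at most
\[
n(edq)^{s-1}\bigl(2e\,dsq/(C^{1/8}r)\bigr)^r\le n(1.1e\log n)^s\,C^{-r/9}\le n^{1-(6\cdot 10^{-4}-o(1))s}.
\]
This uses $dq\le 1.1\log n$ and $\log n/\beta=11^6C_1/10$. Summing over $s>2\beta$ gives $o(1)$.
\end{itemize}
The point is that the tree factor $(edq)^{s}=e^{O(s\log\log n)}$ replaces $\binom{n}{s}=e^{\Theta(s\log n)}$. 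The $n^{-\Omega(s)}$ saving from the extra edges then suffices, with no split by set size needed.
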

\begin{proof}
Let $(1-\varepsilon)n\hat{d}/2\le m\le1.1n\hat{d}/2$ and $S\subseteq V(G)$ be a set of size $s\in[\log n/(11^6C_1), n/C^{1/8}]$. 
By \Cref{lem:expander-mixing}, we have $e_G(S)= \frac{1}{2}(ds^2/n\pm\lambda s) \leq \alpha ds$ for $\alpha =2/C^{1/8}$. 
If $d\le \alpha^{-1}\log n /(11^6C_1)$, then we have $e_{G_{\tau_{2k}}}(S)\le e_G(S)\le\alpha ds \le\hat{d}s/(11^6C_1)$.

Therefore, assume that $d\ge \alpha^{-1}\log n /(11^6C_1)$ and let $j_0=\hat{d}s/(11^6C_1)$. Then, we have 
\begin{align*}
&\mathbb{P}\left(\exists S\subseteq V(G): s\leq \frac{n}{C^{1/8}},\, e_{G_{\tau_{2k}}}(S)\geq\frac{\hat{d}s}{11^6C_1}\right)
\leq \sum_{s =\log n/(11^6C_1)}^{n/C^{1/8}}\binom{n}{s}\sum_{j=j_0}^{e_{G}(S)}\frac{\binom{e_G(S)}{j}\binom{nd/2-e_G(S)}{m-j}}{\binom{nd/2}{m}}\\
&\overset{(\cref{(1)})}{\le}\sum_{s =\log n/(11^6C_1)}^{n/C^{1/8}}\left(\frac{en}{s}\right)^s\sum_{j=j_0}^{e_G(S)}\frac{\binom{e_G(S)}{j_0}\binom{e_G(S)-j_0}{j-j_0}\binom{nd/2-e_G(S)}{m-j_0-(j-j_0)}}{\binom{nd/2}{m}}\\
&=\sum_{s =\log n/(11^6C_1)}^{n/C^{1/8}}\left(\frac{en}{s}\right)^s\frac{\binom{e_G(S)}{j_0}}{\binom{nd/2}{m}}\sum_{k=0}^{e_G(S)-j_0}\binom{e_G(S)-j_0}{k}\binom{nd/2-e_G(S)}{m-j_0-k}\\
&\overset{(\cref{(4)})}{\le}\sum_{s =\log n/(11^6C_1)}^{n/C^{1/8}}\left(\frac{en}{s}\right)^s\frac{\binom{e_G(S)}{j_0}\binom{nd/2-j_0}{m-j_0}}{\binom{nd/2}{m}}\\
&\overset{(\cref{(1)}),(\cref{(5)})}{\le} \sum_{s =\log n/(11^6C_1)}^{n/C^{1/8}}\left(\frac{en}{s}\right)^s(e\cdot11^6C_1\alpha)^{(\hat{d}s/11^6C_1)}=o(1),
\end{align*}
where the last inequality holds as $\alpha=2/C^{1/8}$, $\hat{d}=10\log n$, and $e_G(S)\le\alpha ds$.
\end{proof}

Previously, when verifying the expansion property, we only cared about ``vertex expansion''. The following lemma tells us that the ``edge expansion'' is good.
\begin{lemma}\label{lem:edge-expansion}
For any $k \le 10^{-10}{\hat{d}}/C^2$, the graph $G_{\tau_{2k}}$ satisfies that for any subset $S\subseteq V(G)\setminus\text{\rm SMALL}(G_{\tau_{2k}})$ and $|S|\leq n/2$, then  
\begin{equation}\label{eqn:edge-expansion}
e_{G_{\tau_{2k}}}(S,S^c) \geq 10^{-5}\hat{d}\cdot |S|\,.
\end{equation}
\end{lemma}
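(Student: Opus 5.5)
We split by the size of $S$ and write $s=|S|$. Throughout, $\mathrm{SMALL}(G_{\tau_{2k}})$ means degree at most $\hat d/10^6$, so every $v\in S$ has $\deg_{G_{\tau_{2k}}}(v)\ge \hat d/10^6$. Since $\sum_{v\in S}\deg_{G_{\tau_{2k}}}(v)=2e_{G_{\tau_{2k}}}(S)+e_{G_{\tau_{2k}}}(S,S^c)$, we get
\[
e_{G_{\tau_{2k}}}(S,S^c)\ \ge\ \frac{\hat d s}{10^6}-2e_{G_{\tau_{2k}}}(S).
\]

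\textbf{Small sets, $s\le n/C^{1/8}$.} Here we invoke \Cref{lem:few-edges-inside}, which gives $e_{G_{\tau_{2k}}}(S)<\hat d s/(11^6C_1)$ with $C_1=\log C/1000\ge 10^{12}$. Hence
\[
e_{G_{\tau_{2k}}}(S,S^c)\ \ge\ \hat d s\left(10^{-6}-\frac{2}{11^6C_1}\right)\ \ge\ 10^{-5}\hat d s\cdot\frac{1}{10}.
\]
This is off by a constant factor from the claimed bound, so the constants need checking. If the bound is tight, we would use the slack available in the degree threshold. For very small $s$, below the range $s\ge \log n/(11^6C_1)$ covered by that lemma, we use the trivial bound $e(S)\le \binom{s}{2}\le s\cdot\hat d/(2\cdot 11^6C_1)$ instead. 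I expect to reconcile the constant $10^{-5}$ with the $10^{-6}$ degree threshold, possibly by reading the target as $10^{-7}$ or by re-running \Cref{lem:few-edges-inside} with a larger $C_1$. This bookkeeping is the fiddly point.

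\textbf{Large sets, $n/C^{1/8}\le s\le n/2$.} Here we do not use minimum degree at all; instead we use \Cref{lem:logn_expansion_property}. Since $|S^c|\ge n/2$, we partition $S$ and a subset of $S^c$ into disjoint blocks of size $n/C'$ with $C'=C^{1/3}$. Because $s\ge n/C^{1/8}\ge n/C'$, $S$ contains $\lfloor sC'/n\rfloor\ge sC'/(2n)$ disjoint blocks $S_1,\dots,S_a$, and $S^c$ contains at least one block $T$. For each $i$, \Cref{lem:logn_expansion_property} gives $e(S_i,T)\ge \hat d\,(n/C')/C$. Summing over $i$ yields
\[
e(S,S^c)\ \ge\ \frac{s}{2}\cdot\frac{\hat d}{C}.
\]
This is only of order $\hat d s/C$, which is too weak, so a better pairing is needed. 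We pair each $S_i$ with all $\Omega(C')$ disjoint blocks $T_j$ of $S^c$; there are at least $(n/2)/(n/C')=C'/2$ of them. Summing gives $e(S,S^c)\ge a\cdot (C'/2)\cdot \hat d n/(CC')\approx \hat d s/(4C)\cdot C'$, and this is still $C^{-2/3}$-small.

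To fix this, we would redo the computation in \Cref{lem:logn_expansion_property} directly for the pair $(S,S^c)$. By \Cref{lem:expander-mixing}, $e_G(S,S^c)\ge ds(n-s)/n-\lambda s\ge ds/3$. We then apply a hypergeometric lower-tail bound: with $m\ge(1-\varepsilon)n\hat d/2$ edges, the expected count is $\gtrsim \hat d s/3$. The probability that $e_{G_{\tau_{2k}}}(S,S^c)<10^{-5}\hat d s$ is at most $\exp(-c\hat d s)$, by the same estimates (1),(5),(6) used in \Cref{lem:logn_expansion_property}. A union bound over $\binom{n}{s}\le (eC^{1/8})^s$ choices of $S$ then succeeds, since $c\hat d\gg \log(eC^{1/8})$ provided $\hat d$ is large relative to $\log C$.

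\textbf{Main obstacle.} The delicate part is the sparse case $\hat d=d$ with $d$ constant, where the lower tail $\exp(-c ds)$ must beat $(eC^{1/8})^s$. This requires $d\ge C'\log C$, which holds because $k\ge1$ forces $\hat d\ge 10^{10}C^2$.
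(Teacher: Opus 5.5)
Your two-case structure matches the paper's. For $|S|\le n/C^{1/8}$ you combine the degree lower bound on $S$ with \Cref{lem:few-edges-inside}. For larger $S$ you use a lower-tail bound on the retained cut edges plus a union bound over $S$.

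\textbf{The constant.} You stall on the constant in the small-set case, and your suspicion is right: $10^{-5}$ cannot be reached, and the fault lies with the statement, not your argument. Take a singleton $S=\{v\}$ with $\hat d/10^6<\deg_{G_{\tau_{2k}}}(v)<10^{-5}\hat d$. Then $v\notin\SMA(G_{\tau_{2k}})$, but $e(S,S^c)=\deg(v)<10^{-5}\hat d|S|$. Nothing excludes such vertices; in fact the expected number of vertices with degree in this window is a positive power of $n$. So neither enlarging $C_1$ in \Cref{lem:few-edges-inside} nor ``slack in the degree threshold'' can help.

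The paper's small-set step, $e(S,S^c)\ge\frac12\sum_{v\in S}\deg(v)-e(S)\ge 10^{-5}\hat d|S|$, makes exactly this slip. It is also a factor $2$ lossier than your exact identity $\sum_{v\in S}\deg(v)=2e(S)+e(S,S^c)$.

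The fix is to state what you actually proved: $e(S,S^c)\ge(10^{-6}-2/(11^6C_1))\hat d|S|\ge 0.99\cdot 10^{-6}\hat d|S|$. Your ``$\ge 10^{-5}\hat d s\cdot\frac1{10}$'' slightly overstates this, since a positive term is subtracted from $10^{-6}$. Do not weaken to $10^{-7}$. In Case~2 of (H4) this bound is compared with $e(S\cup T)<\hat d(|S|+|T|)/(11^6C_1)$. Deducing $|T|\ge C_1|S|$ there requires the constant to exceed about $11^{-6}\approx 5.6\cdot 10^{-7}$. Your $0.99\cdot10^{-6}$ does; $10^{-7}$, and even the paper's halved bound of about $5\cdot 10^{-7}$, do not.

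Your separate treatment of very small $s$ is unnecessary, because \Cref{lem:few-edges-inside} is stated for all $|S|\le n/C^{1/8}$. As written it would also fail when $\hat d=d<\log n$, since $\binom{s}{2}\le s\hat d/(2\cdot 11^6C_1)$ need not hold then.

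\textbf{Large sets.} Your final argument is the paper's idea:
\begin{itemize}
\item $e_G(S,S^c)=d|S|-2e_G(S)\ge d|S|(n-|S|)/n-\lambda|S|\ge d|S|/3$;
\item a hypergeometric lower tail;
\item a union bound over at most $(eC^{1/8})^{s}$ sets, which is affordable because $k\ge 1$ forces $\hat d\ge 10^{10}C^2$.
\end{itemize}
It is cleaner than the paper's write-up. The paper takes $T=N(S)\setminus S$, a set depending on the random graph, and then union-bounds over pairs of size $n/(2C')$. Since $e(S,S^c)=e(S,N(S)\setminus S)$, the deterministic pair $(S,S^c)$ is the right object. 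Delete the block-pairing attempts.

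Two small corrections:
\begin{itemize}
\item The bound $m\ge(1-\varepsilon)n\hat d/2$ holds only in the sparse case. In the dense case $m$ is about $n\log n/2=n\hat d/20$. The mean number of retained cut edges is then of order $\hat d|S|/30$. This is still far above $10^{-5}\hat d|S|$ and only changes your constant $c$.
\item Since $\tau_{2k}$ is a random time, say explicitly that for $|S|\ge n/C^{1/8}$ the property does not involve $\SMA$ and is increasing. It therefore suffices to check it in $G_{m_1}$ for a deterministic $m_1\le\tau_{2k}$ whp, via the coupling $G_{m_1}\subseteq G_{\tau_{2k}}$.
\end{itemize}
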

\begin{proof}
We divide the proof into two cases depending on the size of $S$. Assume $G$ is an $(n,d,\lambda)$-graph with $\lambda\le d/C$ for some sufficiently large constant $C>0$.

If $|S|\leq n/C^{1/8}$, then by \cref{lem:few-edges-inside}, we have
\[
e_{G_{\tau_{2k}}}(S)\le \hat{d}|S|/(11^6C_1).
\]
Therefore, one may have 
$$
e_{G_{\tau_{2k}}}(S,S^c)\ge \frac{1}{2}\sum_{v\in S}\text{deg}_{G_{\tau_2}}(v)-e_{G_{\tau_{2k}}}(S)\ge 10^{-5}\hat{d}|S|,
$$
as desired.

Otherwise, by \cref{lem:logn_expansion_property}, we have $T=N_{G_{\tau_{2k}}}(S)\setminus{S}$ has size at least $(1-1/C^{1/4})n-|S|\ge n/2$. 
By \cref{lem:expander-mixing}, one has $e_G(S, T)= ds/2\pm ds/\sqrt{C}$. Thus, we have
$$
\begin{aligned}
{\mathbb{P}\left(e_{G_{\tau_{2k}}}(S,T)< \hat{d}|S|/10^5\right)\le\sum_{j=0}^{\hat{d}|S|/10^5}\frac{\binom{e_G(S, T)}{j}\binom{nd/2-e_G(S, T)}{m-j}}{\binom{nd/2}{m}}\overset{(\cref{(6)})}{\le}\frac{\binom{e_G(S,T)}{\hat{d}|S|/10^5}\binom{nd/2-e_G(S,T)}{m-\hat{d}|S|/C}}{\binom{nd/2}{m}}\overset{(\cref{(1)}),(\cref{(5)})}{\le}\exp\left(-\frac{n\hat{d}}{C}\right).
}\end{aligned}
$$
As $m\ge (1-\varepsilon)n\hat{d}/2$ whp, we conclude that 
$$
\mathbb{P}\left(\exists S,T\subseteq V(G): |S|=|T|=n/(2C^\prime)\text{ and }e_{G_{\tau_{2k}}}(S,T)< \hat{d}|S|/10^5\right)\le 2^{2n}\cdot\exp\left(-\frac{n\hat{d}}{C}\right)=o(1). \qedhere
$$
\end{proof}

We now prove~\cref{thm:k-edge-disjoint-HC}. Many parts involve repeating the calculations done above, but for slightly different parameters. To avoid clustering the paper, we decided to sketch the changes rather than reproduce them in detail. 

\begin{proof}[Proof of \cref{thm:k-edge-disjoint-HC}]
Let $C_1 = \log C/1000$ and $C\geq \exp(10^{15})$.
Our goal is to show that $G^{(i)}$ satisfies properties \cref{H1}-\cref{H5} whp for all $0\le i <k$. 

Property \cref{H1} is immediate when $d \leq 10\log n$, so we may assume that $d\ge10\log n$. 
By~\Cref{prop:bound-hitting-time-for-k}, we know $m\le 1.1n\log n/2$. 
Thus, 
$$
\begin{aligned}
\mathbb{P}&\left(\exists v\in V(G):\text{deg}_{G_{\tau_{2k}}}(v)\ge\hat{d}\right)\le n\sum_{j\ge\hat{d}}\binom{d}{j}\frac{\binom{nd/2~-~d}{m-j}}{\binom{nd/2}{m}}\\
&\overset{(\cref{(4)})}{\le} n\binom{d}{\hat{d}}\frac{\binom{nd/2-\hat{d}}{m-\hat{d}}}{\binom{nd/2}{m}}\overset{(\cref{(1)}),(\cref{(5)})}{\le} n\left(\frac{ed}{\hat{d}}\right)^{\hat{d}}\cdot\left(\frac{m}{nd/2}\right)^{\hat{d}}\le n\left(\frac{1.1e}{10}\right)^{10\log n}=o(1),
\end{aligned}
$$
as $\hat{d}=10\log n$, $k \le 10^{-10}{\hat{d}}/C^2$ and $\binom{d}{j}\le\binom{d}{\hat{d}}\binom{d-\hat{d}}{j-\hat{d}}$. 
Thus, whp we have $\Delta({G_{\tau_{2k}}})\le\hat{d}$ for all $k\le 10^{-10}\hat{d}/C^2$. 
For each $i<k$, $G^{(i)}$ has property \cref{H1} directly.

For property \cref{H2}, we have proven it for $k = 1$ (appeared as~\ref{P1} and~\ref{Q2}). Because $\SMA(G_{\tau_{2k}})\subseteq \SMA(G_{\tau_2})$, we know whp $G_{\tau_{2k}}$ also has property
\cref{H2}. 
However, the deletion of $i$ edge disjoint Hamilton cycles may add vertices to $\SMA(G^{(i)})$, thus violating ~\cref{H2}. 
To avoid this, we introduce a vertex set $\MED(G)=\{v\in V(G):\deg(v)\le\hat{d}/10^5\}$ and let $\LAR(G)=V(G)\setminus\MED(G)$. 
Then, we may prove a stronger version of~\ref{P1} (or~\ref{Q2}) with $\SMA(G_{\tau_2})$ replaced by $\MED(G_{\tau_2})$ using the exact same argument. 
By monotonicity, $|\MED(G_{\tau_{2k}})| \leq n^{0.11}$.
Since $k\le \hat{d}/(10^{10}C^2)$ and each Hamilton cycle decreases the degree of every vertex by $2$, we know that $\SMA(G^{(i)})\subseteq \MED(G_{\tau_{2k}})$, which proves~\ref{H2}.

For property~\ref{H3}, we first introduce $\MED(G_{\tau_{2k}})$ as before and show a strengthened version of~\ref{H3} for $\MED(G_{\tau_{2k}})$. 
This will imply~\ref{H3}. Indeed, note that $\SMA(G^{(i)})\subseteq \MED(G_{\tau_{2k}})$ for all $i\leq k$. 
Also, since $G^{(i)}$s are obtained from $G_{\tau_{2k}}$ by removing Hamilton cycles, the distance between two vertices only increases. 
To show the strengthened~\ref{H3}, we divide into sparse and dense cases and repeat the proof of~\ref{P2} and~\ref{Q3}, respectively.

For property \cref{H5}, we need to show that for any two disjoint subsets $S,T\subseteq V(G)$ with $|S|=|T|= n/(2C_1)$, we have $e_{G^{(i)}}(S,T)\geq 1$. 
By~\cref{lem:logn_expansion_property}, we know $e_{G_{\tau_{2k}}}(S,T)> 2k|S|$.
As $k$ edge-disjoint Hamilton cycles use at most $2k|S|$ edges between $S,T$ and $k\le 10^{-10}{\hat{d}}/C^2<\hat{d}/2C$,  we conclude $e_{G^{(i)}}(S,T)\ge 1$ for all $i\leq k$.

It remains to verify property \cref{H4}, i.e., for any subset of vertices $S\subseteq V(G)\setminus\text{\rm SMALL}(G^{(i)})$ with $|S|\leq n/(2C_1)$, we have $|N_{G^{(i)}}(S)|\geq C_1|S|$ for all $i< k$.
We divide the proof into three cases: 
\begin{description}
\item[Case 1 ($|S|\le n/C^2$)] 
Let $T=N_{G^{(i)}}(S)$. 
Assume for contradiction that $|T|< C_1|S|$. 
By ~\cref{lem:expander-mixing}, we have 
$$
e_G(S,T)\leq\frac{d}{n}|S||T|+ \lambda\sqrt{|S||T|}\le \frac{2d\sqrt{C_1}}{C}|S|.
$$
If $d\le\sqrt{C}\log n$, then $e_{G_{\tau_{2k}}}(S,T)\le e_G(S,T)\leq \hat{d}|S|/C_1$.
If $d\ge \sqrt{C}\log n$, then  
\begin{align*}
\mathbb{P}&\left(\exists S : |S|\le n/C^2\text{ and }e_{G_{\tau_{2k}}}(S,T)\ge \hat{d}|S|/C_1\right)\le\sum_{s\le n/C^2}\binom{n}{s}\sum_{j= \hat{d}|S|/C_1}^{e_G(S,T)}\frac{\binom{e_G(S,T)}{j}\binom{nd/2-e_G(S,T)}{m-j}}{\binom{nd/2}{m}}\\
&\overset{(\cref{(4)})}{\le}\sum_{s\le n/C^2}\binom{n}{s}\frac{\binom{e_G(S,T)}{\hat{d}|S|/C_1}\binom{nd/2-\hat{d}|S|/C_1}{m-\hat{d}|S|/C_1}}{\binom{nd/2}{m}}\overset{(\cref{(1)}),(\cref{(5)})}{\le} \sum_{s\le n/C^2}\left(\frac{en}{s}\right)^{s}\left(\frac{eC_1^{2}}{C}\right)^{\hat{d}s/C_1}=o(1),
\end{align*}
as $C=e^{1000C_1}$, $\hat{d}=10\log n$ and $\binom{e_G(S,T)}{j}\le\binom{e_G(S,T)}{\hat{d}|S|/C_1}\binom{e_G(S,T)-\hat{d}|S|/C_1}{j-\hat{d}|S|/C_1}$. 
Thus, for any $d$, we have $e_{G_{\tau_{2k}}}(S,T)\leq \hat{d}|S|/C_1$. However, by~\cref{lem:edge-expansion},
$$
e_{G_{\tau_{2k}}}(S,T)\ge e_{G_{\tau_{2k}}}(S,S^c)-2k|S|\ge10^{-6}\hat{d}|S|>\hat{d}|S|/C_1\,,
$$
which is a contradiction.

\item[Case 2 ($n/C^2\le |S|\le n/C^{1/4})$]
Since $|S|\geq n/C^2$, we have $|N_{G_{\tau_{2k}}}(S)|\le n\leq C^2|S|$. It suffices to show that there exists a set $T\subseteq N_{G_{\tau_{2k}}}(S)$ for which $|T| \geq C_1|S|$ and that $\deg_S(v) \ge 2k$ for all $v\in T$. 
On the one hand, by \cref{lem:few-edges-inside}, we know that 
\[
    e_{G_{\tau_{2k}}}(S,T) \leq e_{G_{\tau_{2k}}}(S\cup T) < \frac{\hat{d}(|S|+ |T|)}{11^6 C_1}\,.
\]
On the other hand, as $|N_{G_{\tau_{2k}}}(S)|< C^2|S|$ and $k = 10^{-10}\hat{d}/C^2$, by \cref{lem:edge-expansion}, we conclude
\begin{align*}
    e_{G_{\tau_{2k}}}(S,T)&\geq e_{G_{\tau_{2k}}}(S,N_{G_{\tau_{2k}}}(S)) -e_{G_{\tau_{2k}}}(S,N_{G_{\tau_{2k}}}(S)\setminus T) \\
    &\geq 10^{-5}\hat{d}|S| - 2k\cdot 10^{-10}|S|/C^2 \geq 10^{-6}\hat{d}|S|\,.
\end{align*}
Putting them together, we conclude $|T|\geq C_1|S|$ for a sufficiently large $C_1>0$, as desired.

\item[Case 3 ($n/(C^{1/4})\le|S|\le n/2C_1$)]
By \cref{lem:logn_expansion_property}, we have $|N_{G_{\tau_{2k}}}(S)|\ge(1-1/C^{1/4})n$. 
Let $W=\{v\in N_{G_{\tau_{2k}}}(S)\setminus{S}:\text{deg}_{G_{\tau_{2k}}}(v,S)\le 2k\}$. 
We may assume $|W|\ge n/C^{1/4}$. 
Indeed, otherwise by~\cref{lem:logn_expansion_property}, we have $c\hat{d}|S|\le e_{G_{\tau_{2k}}}(W,S)\le 2k|S|$, a contradiction. 
Thus, we have $|T|\ge n/2\ge C_1|S|$. 
\end{description}
Altogether, we conclude that $G^{(i)}$ has \cref{H1}-\cref{H5} whp.
Thus, by following the same proof as in \cref{subsec:main-proof} to each $G^{(i)}$, we can find one Hamilton cycle in each $G^{(i)}$ and therefore $k$ edge-disjoint Hamilton cycles in $G_{\tau_{2k}}$, as desired.
\end{proof}

\section{Deducing the sharp threshold}\label{subsec:deducing-corollaries}
The goal of this section is to prove~\Cref{cor:sharp-threshold} and~\Cref{cor:sharp-threshold-refine}.
 Note that ~\Cref{cor:sharp-threshold} follows from the ~\Cref{cor:sharp-threshold-refine}, so we will only prove ~\Cref{cor:sharp-threshold-refine}. 
\begin{proof}[Proof of~\Cref{cor:sharp-threshold-refine}]
   Combining~\Cref{thm:hitting-time} and~\Cref{prop:bound-hitting-time-dense}, it suffices to verify the two conditions~\ref{A1} and~\ref{A2} in~\Cref{prop:bound-hitting-time-dense}.

    For the first case, if $d=o(\log n)$, then when $p=1-(nd)^{-(1+\varepsilon)/(d-1)}$ and $n\rightarrow\infty$,
    $$n\left((1-p)^d+dp(1-p)^{d-1}\right)=(1+o(1))(nd)^{-\varepsilon} \rightarrow 0\,,$$ so~\ref{A1} is satisfied.
    If $p=1-(nd)^{-(1-\varepsilon)/(d-1)}$, by a similar computation, ~\ref{A2} is satisfied.

    For the second case, if $p=1-e^{-\log n/d-\varepsilon}$, since $d=c\log n$ for some constant $c>0$, then $p=(1-e^{-1/c-\varepsilon})$. Note that $$n\left((1-p)^d+dp(1-p)^{d-1}\right)=(1+o(1))pdn^{-\varepsilon c} \rightarrow 0\,,$$
    so~\ref{A1} is satisfied.
   If $p=(1-e^{-1/c+\varepsilon})$, by a similar computation, ~\ref{A2} is satisfied.

    For the third case, if $p = (\log n+ \log\log n)/d-(1-\varepsilon)\log^2 n/(2d^2)$, note that $(1-p)^d = \exp( -dp - dp^2/2 + O(dp^3) )$. Hence,  $$n\left((1-p)^d+dp(1-p)^{d-1}\right)=(1+o(1))\exp\left( \frac{-\varepsilon \log^2 n}{2d} \right)\rightarrow 0\,,$$ 
    so~\ref{A1} is satisfied.
    If $p = (\log n+ \log\log n)/d-(1+\varepsilon)\log^2 n/(2d^2)$, by a similar computation, ~\ref{A2} is satisfied.

    For the fourth case, if $p = (\log n+ \log\log n+\omega(1))/d$, note that $(1-p)^d = \exp( -dp - O(dp^2))$. Then 
    $$n\left((1-p)^d+dp(1-p)^{d-1}\right)=(1+o(1))\exp\left( -\omega(1) \right)\rightarrow 0\,,$$ so~\ref{A1} is satisfied.
    If $p=(\log n+ \log\log n-\omega(1))/d$, by a similar computation, ~\ref{A2} is satisfied.
\end{proof}

\section{Concluding remarks}\label{sec:concluding-remarks}
In this paper, we studied the hitting time for Hamiltonicity in $(n,d,\lambda)$-graphs and proved the optimal results. We finish the papers with a few remarks and open problems.

First, we note it is straightforward to generalise~\Cref{thm:k-edge-disjoint-HC} to work for minimum degree $2k+1$ and $k$ Hamilton cycles plus a perfect matching, all edge disjoint, with the same parameters. Indeed, recall the strategy for~\Cref{thm:k-edge-disjoint-HC} was to remove Hamilton cycles one by one and check at each time the resulting graph is still a $C$-expander. Removing a perfect matching instead of a Hamilton cycle in the first round does not affect the rest of the proof.

Second, our results fundamentally rely on the property that a $C$-expander is robustly Hamiltonian. A natural question arises: can the hitting time result be extended to general $C$-expanders? The following example demonstrates that the answer is negative.
\begin{proposition}
    There exists an $\Omega((n/\log n)^{1/2})$-expander $G$ on $n$ vertices such that whp, $G_{\tau_2}$ is not Hamiltonian.
\end{proposition}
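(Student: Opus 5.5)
We need a $C$-expander with $C=\Omega(\sqrt{n/\log n})$ such that at $\tau_2$ the graph is whp not Hamiltonian. The natural obstruction is two degree-2 vertices that share both of their neighbours: if $u,v$ have the same neighbourhood $\{a,b\}$ in $G_{\tau_2}$, then any Hamilton cycle must use the paths $a u b$ and $a v b$, closing a $4$-cycle, which is impossible for $n>4$. So the plan is to build $G$ in which some vertex set is forced to hit minimum degree $2$ last, and in which those last vertices likely share neighbourhoods.

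\textbf{Construction.} Take a dense, well-expanding graph $H$ on $n-s$ vertices, for instance $K_{n-s}$. Add $s$ extra vertices, partitioned into $s/2$ twin pairs $\{u_i,v_i\}$. Give each pair a common neighbourhood $T_i\subseteq V(H)$ of size $t$, with the sets $T_i$ disjoint; this needs $st/2\le n$. The extra vertices form an independent set.

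\textbf{Expansion.} A set $S$ of size at most $n/2C$ containing an extra vertex has neighbourhood of size at least roughly $t$ times the number of distinct pairs it meets, and $S\cap V(H)$ expands to essentially all of $H$. Any two large sets have an edge between them via $H$ or via the $T_i$. Hence $G$ is roughly a $\min(t/2,\,\cdot)$-expander, and we want $t\approx\sqrt{n/\log n}$.

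\textbf{Probabilistic step (the main obstacle).} In the random process the vertices of $H$ have degree about $n$, so they reach degree $2$ very early. The extra vertices have degree $t$, so $\tau_2$ is governed by them. Work in $G_p$ near the threshold, using \Cref{lem:asymptotic-equiv} and the coupling from \Cref{prop:bound-hitting-time-dense}. We want some pair $u_i,v_i$ to have degree exactly $2$ with the same two neighbours at time $\tau_2$. A cleaner route is to take $t=2$ for most pairs, which gives exact twins, but that destroys expansion. So instead use the last vertex to reach degree $2$: at time $\tau_2$ it has exactly two edges, to $a,b\in T_i$. The probability that its twin $v_i$ then also has neighbourhood exactly $\{a,b\}$ is about $1/\binom{t}{2}$, conditional on $v_i$ having degree $2$. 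That is too small for a single pair. To fix this, choose $s$ large, about $n/t\approx\sqrt{n\log n}$ pairs, so that at $\tau_2$ many extra vertices sit at degree exactly $2$.

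The step I expect to be hardest is balancing the parameters. One needs $t=\Theta(\sqrt{n/\log n})$ so that, near the threshold $p$ where the expected number of degree-$\le1$ extra vertices is $\Theta(1)$, whp some twin pair both have degree $2$ with the same neighbourhood. Note that both twins have neighbourhoods inside the same $t$-set, so a coincidence has probability about $t^{-2}$ per pair at degree $2$, and the number of degree-$2$ extra vertices must be around $t^2$. I would compute the expected number of coinciding degree-$2$ twin pairs at $p=\tau_2/e(G)$ (between $p_1,p_2$ as in \Cref{prop:bound-hitting-time-new}), show it tends to infinity, and finish with a second-moment argument over the independent disjoint pairs. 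Once a coinciding pair exists at $\tau_2$ (such pairs persist, since edges only get added and this pair was already at degree $2$), the $4$-cycle obstruction gives non-Hamiltonicity. If the balancing fails with disjoint $T_i$, I would fall back to letting all pairs share one common $T$ of size $t$, which increases coincidences while keeping expansion about $t$.
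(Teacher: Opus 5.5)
There is a genuine gap in the probabilistic step, and it is not a matter of balancing parameters. Your $4$-cycle obstruction is valid, but in your construction it essentially never occurs at time $\tau_2$.
\begin{itemize}
\item Near the hitting time one has $s\Pr(\Bin(t,p)\le 1)=\Theta(1)$, hence $tp\approx\log s$.
\item Since $\Pr(\Bin(t,p)=2)/\Pr(\Bin(t,p)=1)\approx tp/2$, only $O(\log n)$ extra vertices have degree exactly $2$ at $\tau_2$, however large you take $s$. The count you need (``around $t^2$'') is unreachable.
\item The two twins have disjoint edge sets, so a given pair is simultaneously at degree $2$ with probability about $(\log n/s)^2$. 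The expected number of twin pairs both at degree $2$ is therefore $O(\log^2 n/s)=o(1)$, even before multiplying by $1/\binom{t}{2}$ for equal neighbourhoods.
\item The fallback with one common set $T$ fails the same way: $O(\log n)$ degree-$2$ vertices picking essentially uniform pairs inside $T$ coincide with probability $O(\log^2 n/t^2)=o(1)$.
\end{itemize}

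No choice of host graph rescues a local twin obstruction of this kind. A $C$-expander has minimum degree at least $C$. For vertices $u,v$, the probability in $G_p$ that both have neighbourhood exactly $\{a,b\}$ is, up to a $(1-p)^{-O(1)}$ factor,
\[
\frac{\Pr(\deg_{G_p}u=1)}{d_u}\cdot\frac{\Pr(\deg_{G_p}v=1)}{d_v}\cdot p^2 .
\]
Summing over the at most $d_ud_v$ choices of $\{a,b\}$ shows that the expected number of coincident pairs is $O\bigl(p^2(\sum_u\Pr(\deg_{G_p}u=1))^2\bigr)$. Near the threshold the bracket is $O(1)$ and $p=O(\log n/C)\to 0$, so this is $o(1)$. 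Such configurations can at best make $G_{\tau_2}$ non-Hamiltonian with probability $o(1)$; you need a global obstruction instead.

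The paper takes $G=K_{n/3,2n/3}\cup G_2$, where $G_2$ is an $(n,d,\lambda)$-graph with $d=cn/\log n$ and $\lambda\le 3\sqrt d$.
\begin{itemize}
\item The expansion $\Omega(d/\lambda)=\Omega(\sqrt{n/\log n})$ comes from $G_2$.
\item The bipartite part dominates the edge count, so at $\tau_2=\Theta(n\log n)$ only $O(cn)\le n/4$ edges lie inside the larger side $B$.
\item Every Hamilton cycle must use at least $|B|-|A|=n/3$ edges inside $B$, so $G_{\tau_2}$ is whp not Hamiltonian.
\end{itemize}
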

\begin{proof}[Proof Sketch]
    Let $c>0$ be a sufficiently small constant, determined later. 
    Let $G_1$ be the complete bipartite graph $K_{n/3, 2n/3}$ with parts $A$ and $B$.
    Let $G_2$ be the $(n, d, \lambda)$-graph on $A \cup B$ with $d = c n/\log n$ and $\lambda \leq 3\sqrt{d}$.
    Note that such a graph exists for sufficiently large $n$ by considering a random $d$-regular graph~\cite{Friedman}.
    Let $G = G_1 \cup G_2$.
    Then as $G_2$ is an $\Omega(d/\lambda) = \Omega((n/\log n)^{1/2})$-expander, $G$ is also an $\Omega((n/\log n)^{1/2})$-expander.
    We now observe that as all vertices have degree at least $n/3$, the hitting time $\tau_2 = \Theta(n \log n)$ whp.
    Thus, by Chernoff's bound, whp $G_{\tau_2}[A]$ and $G_{\tau_2}[B]$ contains at most $2e(G_2) \cdot (\tau_2/e(G))  \leq n/4$ 
    edges by choosing $c$ sufficiently small.
    On the other hand, as $G_1$ is a complete bipartite graph, every Hamilton cycle of $G_{\tau_2}$ must contain at least $n/3$ edges in $G_{\tau_2}[B]$. Therefore, whp $G_{\tau_2}$ is not Hamiltonian.
\end{proof}
It is natural to ask what are sufficient conditions for a $C$-expander to have the hitting time property. For instance, as our construction is far from being regular, we would like to ask whether imposing the graph to be regular suffices.
\begin{question}
    Does there exists a $d$-regular graph $C$-expander $G$ for a sufficiently large constant $C>0$ but does not satisfy $G_{\tau_2
    }$ is Hamiltonian whp?
\end{question}

Third, we observe two distinct regimes governing the structural properties of $G_{\tau}$, which fundamentally alter the complexity of packing edge-disjoint Hamilton cycles.
\begin{enumerate}[label=(\arabic*)]
    \item The Sparse Regime ($k \ll \min\{\log n, d\}$): 
In this regime, the degree distribution of vertices in $G_{\tau}$ converges asymptotically to a Poisson distribution. Consequently, $G_{\tau}$ is structurally highly irregular: while the minimum degree $\delta(G_{\tau})$ is fixed at $2k$, the average degree is significantly higher. 
When employing the edge deletion method to establish the existence of $k$ edge-disjoint Hamilton cycles, this irregularity is advantageous. The removal of edges required for the first few cycles has a negligible impact on the global connectivity; the vast majority of vertices retain a degree much larger than $\delta(G_{\tau})$. Thus, the remaining graph preserves strong expansion properties on the set of large-degree vertices, providing the `structural slack' necessary to close subsequent cycles.
    \item The Dense Regime ($k \gg \log n$): 
Conversely, when $k$ is large, the degree distribution of $G_{\tau}$ is asymptotic to the normal distribution due to the concentration of measure. In this setting, $G_{\tau}$ becomes almost regular, with $\delta(G_{\tau}) \approx \Delta(G_{\tau}) \approx \mathbb{E}[d]$. 
This transition fundamentally changes the nature of the problem. Proving the hitting time for $\lfloor\delta(G_{\tau})/2\rfloor$ edge-disjoint Hamilton cycles shifts from an expansion problem to a tight packing problem. Since the minimum degree is close to the average degree, a maximal packing of cycles requires utilizing nearly every edge in the graph. Unlike the sparse case, there is no `high-degree core' to buffer the edge deletion; the loss of edges degrades the expansion properties uniformly across the graph. This lack of redundancy makes the dense regime significantly more analytically challenging than the sparse case.
\end{enumerate}
Thus, a natural question to ask, which necessarily requires a different approach, is the following:
\begin{question}
    Given an $(n,d,\lambda)$-graph with $d = \omega(\log n)$ and $d/\lambda \geq C$ for some sufficiently large constant $C>0$, is it true that $\tau_{2k} = \tau_{k\HC}$ for all $k\in [\lfloor d/2\rfloor]$ whp?
\end{question}
We would like to remark that when $d$ is even and $k=d/2$, the question is equivalent to asking for a Hamiltonian decomposition of an $(n,d,\lambda)$-graph and the approximate version of this problem was recently solved by Dragani\'{c}, Kim, Lee, Munh\'{a} Correia, Pavez-Sign\'{e} and Sudakov~\cite{Draganic-et-al-resilience}.

\appendix
\section{Hitting time result for random graphs}\label{sec:appendix}
Recall the following asymptotic equivalence between $G(n,m)$ and $G(n,p)$:
\begin{proposition}[{\cite[Proposition 1.2]{janson-book}}]\label{prop:equiv-appendix}
    Let $\mathcal P$ be a graph property and $p = p(n)\in [0,1].$ If for every sequence of $m = m(n)$ with $m = \binom{n}{2}p + O\left(\sqrt{\binom{n}{2}p(1-p)}\right)$, we have $\Pr(G(n,m)\in \mathcal P)\rightarrow a$ as $n\rightarrow \infty$, then also $\Pr(G(n,p)\in \mathcal P) \rightarrow a$ as $n\rightarrow \infty$.
\end{proposition}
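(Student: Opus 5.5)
This is the standard asymptotic equivalence between $G(n,m)$ and $G(n,p)$ (Janson--{\L}uczak--Ruci\'nski, Proposition 1.12). The plan is to condition on the number of edges. Write $N=\binom n2$ and let $M\sim\Bin(N,p)$ be the number of edges of $G(n,p)$. Conditioned on $M=m$, $G(n,p)$ is distributed exactly as $G(n,m)$, so
\[
\Pr(G(n,p)\in\mathcal P)=\sum_{m=0}^{N}\Pr(M=m)\,\Pr(G(n,m)\in\mathcal P).
\]
I will split this sum into a window $I_K=\{m: |m-Np|\le K\sqrt{Np(1-p)}\}$ and its complement, with $K$ a large constant to be sent to infinity at the end.

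The first step handles the complement. By Chebyshev, since $\Var M = Np(1-p)$, we have $\Pr(M\notin I_K)\le 1/K^2$, so the outside terms contribute at most $1/K^2$. If $Np(1-p)$ is bounded (for instance $p\to 0$ or $p\to 1$ very fast), the window still contains $O(1)$ integers around $Np$, and Chebyshev still applies.

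The second step, which is the main obstacle, handles the window. The hypothesis is stated for every sequence, not uniformly over $m\in I_K$, so I upgrade it by contradiction. Fix $K$ and $\eta>0$. Suppose that for infinitely many $n$ there is some $m_n\in I_K$ with $|\Pr(G(n,m_n)\in\mathcal P)-a|>\eta$. Choose such $m_n$ for those $n$, and $m_n=\lfloor Np\rfloor$ otherwise. This sequence satisfies $m_n=Np+O(\sqrt{Np(1-p)})$, so the hypothesis gives $\Pr(G(n,m_n)\in\mathcal P)\to a$, a contradiction. Hence
\[
\max_{m\in I_K}\bigl|\Pr(G(n,m)\in\mathcal P)-a\bigr|\to 0 .
\]

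To conclude,
\[
\bigl|\Pr(G(n,p)\in\mathcal P)-a\bigr|\le \sum_{m\in I_K}\Pr(M=m)\,\bigl|\Pr(G(n,m)\in\mathcal P)-a\bigr| + 2\Pr(M\notin I_K)\le o(1)+2/K^2 .
\]
Taking $\limsup$ as $n\to\infty$ and then letting $K\to\infty$ gives the claim. No monotonicity of $\mathcal P$ is needed, because the hypothesis covers every $m$ in the window.
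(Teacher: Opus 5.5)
This is the standard argument: condition on $M=e(G(n,p))\sim\Bin(N,p)$, discard $M\notin I_K$ by Chebyshev, and control the window uniformly. It is essentially the proof in Janson--{\L}uczak--Ruci\'nski, which the paper cites rather than reproves.

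One sentence is false as written, though. The default choice $m_n=\lfloor Np\rfloor$ need not satisfy $m_n=Np+O(\sqrt{Np(1-p)})$. For example, if $p=1-N^{-2}$, then $\lfloor Np\rfloor=N-1$ lies at distance $1-1/N$ from $Np$, whereas $\sqrt{Np(1-p)}\approx N^{-1/2}$. So your spliced sequence is not admissible, and the hypothesis cannot be invoked to get the contradiction.

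The fix is immediate. For $K>1$, Chebyshev gives $\Pr(M\in I_K)\ge 1-K^{-2}>0$, so $I_K$ is nonempty. You may therefore take the default from $I_K$, for instance the integer nearest to $Np$; it automatically lies in $I_K$, since it is at least as close to $Np$ as any integer of $I_K$.

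Cleaner still, avoid the contradiction argument altogether. For every $n$, let $m_n$ be a maximiser of $\bigl|\Pr(G(n,m)\in\mathcal P)-a\bigr|$ over $m\in I_K$. This sequence is admissible outright, so the hypothesis directly gives $\max_{m\in I_K}\bigl|\Pr(G(n,m)\in\mathcal P)-a\bigr|\to 0$. With that change, the rest of your proof goes through as written.
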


The goal of this section is to prove the following:
\begin{proposition}\label{prop:Gnm-appendix}
    Let $m = (n \log n + n\log\log n+\omega(n))/2$ and $G= G(n, m)$ be a graph chosen uniformly at random from all graphs on $n$ vertices and $m$ edges. Then whp $G$ satisfies $\mathbb{P}(\tau_{\HC}(G) = \tau_{2}(G)) = 1-o(1)$.
\end{proposition}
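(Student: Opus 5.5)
Here $G=G(n,m)$ is the host graph, and the probability inside the statement is over the random ordering of $E(G)$. The plan is to show that whp $G$ satisfies certain deterministic pseudorandom properties. For every fixed host with those properties, the process on it should then have $\tau_{\HC}=\tau_2$ with probability $1-o(1)$. It is enough to prove the properties in $G(n,p)$ with $p=m/\binom n2=(\log n+\log\log n+\omega(1))/n$ and transfer them via \Cref{prop:equiv-appendix}, or via monotonicity and coupling. Write $d=\log n+\log\log n+\omega(1)$ for the expected degree.

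\textbf{Step 1: properties of the host.} Whp $G$ has:
\begin{enumerate}[(i)]
\item minimum degree at least $2$ (this is where the $\log\log n$ term is used);
\item maximum degree at most $10\log n$;
\item at most $n^{0.11}$ vertices of degree at most $\log n/10^4$, pairwise at distance more than $10$;
\item every set $S$ with $|S|\le n/\log^{1/2} n$ spans at most $|S|\log n/D$ edges, for a large constant $D$;
\item any two disjoint sets of size at least $\eta n$ span at least $\eta^2 n\log n/2$ edges between them.
\end{enumerate}
These are standard first-moment and Chernoff computations in $G(n,p)$. They mirror \Cref{prop:locally-sparse} and \ref{Q1}--\ref{Q4}, with the expander mixing lemma replaced by direct binomial bounds.

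\textbf{Step 2: the process on a fixed good host.} Condition on such a $G$ and run the process. We have $e(G)=m\approx n\log n/2$, and by (i) the process ends at minimum degree at least $2$. The number of edges $\tau_2$ is random. I would repeat the proofs of \Cref{prop:bound-hitting-time-sparse} and \Cref{lem:properties-sparse} with $G$ in place of a $d$-regular graph. The only feature those proofs really use is that degrees are about $\log n$ and the edge-distribution estimates from (iv) and (v). From this I would derive the analogues of \ref{P1}--\ref{P4}:
\begin{itemize}
\item the small-degree vertices of $G_{\tau_2}$ form a set of size at most $n^{0.11}$ with pairwise distance more than $4$;
\item $G_{\tau_2}$ restricted to the remaining vertices vertex-expands by a factor $C_1$;
\item any two disjoint sets of size at least $n/(4C_1^3)$ have an edge between them.
\end{itemize}
The argument then finishes exactly as in \Cref{subsec:main-proof}: contract each small vertex into an edge of $E_0$ and apply \Cref{thm:robust-Hamiltonicity}.

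\textbf{Main obstacle.} The hard step is controlling the small-degree vertices in the process when the host is not regular. A vertex $v$ with $\deg_G(v)$ close to $2$ retains essentially all of its edges at time $\tau_2$, since $\tau_2/e(G)$ is close to $1$ here. So the tail computation in \ref{P1} must be carried out using the host degree sequence. The sampling fraction is $m'/m$ with $1-m'/m$ small, and a vertex ends up in $\SMA$ only if either its host degree is already small or it loses most of its edges. The second event is extremely unlikely, so $\SMA(G_{\tau_2})$ is essentially contained in the set of host vertices of low degree. That set is controlled by (iii), which gives both the size bound and the distance condition directly.

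The expansion for sets of size between $\log n$ and $n/\log n$ also needs care. For this I would use (iv) for $G$ together with the fact that $G_{\tau_2}\subseteq G$, exactly as in Case 2 of \ref{Q5}. Large sets are handled by (v) together with the fact that only a $1-\varepsilon$ fraction of edges is removed.
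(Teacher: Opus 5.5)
Your route differs from the paper's and is much heavier: you re-derive the hitting-time statement for a random host from scratch (host pseudorandomness, a rerun of \Cref{lem:properties-sparse}, then \Cref{thm:robust-Hamiltonicity}). As written it has three gaps.

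\emph{Scope.} The statement allows any $\omega(n)$ with $\omega(n)/n\to\infty$. Your key claim that $1-\tau_2/e(G)$ is small fails once $\omega(n)=\Omega(n\log n)$, and the containment of $\SMA(G_{\tau_2})$ in the low host-degree set rests on that claim. Property (ii) fails once $\omega(n)\gg n\log n$. For example, if $m=n^{3/2}$, then $\tau_2\approx n\log n/2\ll m$ and the maximum degree is about $\sqrt n$.

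\emph{Lower bound on $\tau_2$.} Even when $m\sim n\log n/2$, properties (i)--(v) of a fixed host do not give the lower bound on $\tau_2$ that your smallness and joinedness steps need. Rerunning \Cref{prop:bound-hitting-time-sparse} on an irregular host requires knowing how many host vertices have each bounded degree. A near-regular host of degree about $\log n$ satisfies (i)--(v), yet $1-\tau_2/m$ stays bounded away from $0$ there. So you need an extra degree-sequence property, or a Fubini argument using that $G_t\sim G(n,t)$ jointly over host and ordering.

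\emph{Intermediate set sizes.} Property (iv) stops at $|S|\le n/\sqrt{\log n}$ and (v) only applies to sets of size at least $\eta n$. Vertex expansion for $n/\sqrt{\log n}\lesssim |W|\le \eta n$ is therefore not covered. You need an analogue of Case~3 in the proof of \Cref{lem:properties-dense}, e.g.\ $e_G(W,W')\le |W|\log n/(3\cdot 10^5)$ whenever $|W|\le\eta n$ and $|W'|\le C_1|W|$.

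The last two are routine to add; the first is a real restriction.

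The idea you are missing, which the paper uses and which makes all of this unnecessary, is to couple with the process on $K_n$ and use the Ajtai--Koml\'os--Szemer\'edi/Bollob\'as theorem as a black box.
\begin{itemize}
\item Group the orderings $\sigma$ of $E(K_n)$ by the graph $K_m(\sigma)$. Every $G\in\mathcal G_m$ receives the same number of orderings, and conditioned on $G$, the first $m$ entries of $\sigma$ form a uniformly random ordering of $E(G)$.
\item If $G$ is Hamiltonian, then $\tau_2\le\tau_{\HC}\le m$ for the $K_n$-process. Since the two processes coincide up to time $m$, both hitting times agree with those of the $G$-process.
\item Only an $o(1)$ fraction of all $\sigma$ have $\tau_{\HC}>\tau_2$. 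By Markov's inequality, all but an $o(1)$ fraction of $G\in\mathcal G_m$ have conditional failure probability $o(1)$.
\item Intersecting with the whp Hamiltonicity of $G(n,m)$ finishes the proof.
\end{itemize}
This takes a few lines, uses nothing from the paper, and covers every $m$ of the stated form. Your approach, once repaired, would at best give an independent proof in the regime $m=(1+o(1))n\log n/2$, and it relies on \Cref{thm:robust-Hamiltonicity}.
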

The $G(n,p)$ version follows directly by combining~\Cref{prop:Gnm-appendix} with~\Cref{prop:equiv-appendix}.

We introduce the following notation: For a graph $G$, a permutation $\sigma$ of $E(G)$, and an integer $t\in \mathbb N$, let $G_t(\sigma)$ be the subgraph obtained from $G$ by keeping only edges $\sigma(1),\dots, \sigma(t)$. Also, let $G(\sigma) = (G_0(\sigma),G_1(\sigma),\dots)$ so that we can define the hitting time as before.
At last, to avoid clustered formulas, we write $K$ for $K_n$.
\begin{proof}
    Fix an arbitrary $\eps > 0$ and assume that $n$ is sufficiently large.
    Let $\mathcal G_m$ denote the set of graphs on $n$ vertices with $m$ edges.
    For each $G\in \mathcal G_m$, let $S_G$ be the set of permutations $\sigma$ of $E(K)$ such that $K_m(\sigma)=G$.
    Note that $|S_G| = m!(n-m)!$ for all $G\in \mathcal G_m$ and 
    $\{S_G\}_{G\in \mathcal G_m}$ partitions the set of all permutations of $E(K)$.
    
    Let $S'_G\subseteq S_G$ be the set of $\sigma \in S_G$ such that $\tau_{\HC}(K(\sigma)) > \tau_2(K(\sigma))$.
    Since $\tau_2(K) = \tau_{\HC}(K)$ whp, 
    \begin{equation}\label{eqn:whp}
        \frac{\sum_{G\in \mathcal G_m} |S'_G|}{\sum_{G\in \mathcal G_m} |S_G|} = o(1).
    \end{equation}
    Let $\mathcal H_1$ denote the set of graphs $G\in \mathcal G_m$ such that $|S'_G|/|S_G|\leq \eps$ and $\mathcal{H}_2$ the set of graphs $G\in \mathcal G_m$ such that $G$ is Hamiltonian. 
    For each $G\in \mathcal H_1 \cap \mathcal{H}_2$, since $|S'_G|/|S_G|\leq \eps$, for a uniformly random permutation $\sigma\in S_G$, we have $\Pr_\sigma(\tau_2(G(\sigma)) = \tau_{\HC}(G(\sigma))) \geq 1-\eps$. 
    Let $\pi$ be a uniformly random permutation of $E(G)$.
    Because $\sigma$ restricted to the first $m$ entries has the same distribution as $\pi$, we know $\Pr_\pi(\tau_2(G(\pi)) = \tau_{\HC}(G(\pi))) \geq 1-\eps$.

    Note that $|\mathcal H_1|/|\mathcal G_m| \geq 1-\eps$, otherwise the left-hand side of~\eqref{eqn:whp} is at least $\eps^2$, a contradiction. Since $G(n,m)$ is Hamiltonian whp, we also have $|\mathcal{H}_2|/|\mathcal G_m| \geq 1-\varepsilon$.
    Thus, $|\mathcal H_1 \cap \mathcal{H}_2|/|\mathcal G_m| \geq 1-2\varepsilon$.
    As $\varepsilon>0$ is arbitrary, $\Pr(G(n,m)\in \mathcal H_1 \cap \mathcal{H}_2) = 1-o(1)$. Also, for each $G\in \mathcal H_1 \cap \mathcal{H}_2$, we know $\Pr_\pi(\tau_2(G(\pi)) = \tau_{\HC}(G(\pi))) = 1 - o(1)$ where $\pi$ is a uniformly random permutation of $E(G)$, as desired.
\end{proof}

\end{document}